\documentclass[11pt,a4paper]{article}
\usepackage[utf8]{inputenc}
\usepackage[english]{babel}
\usepackage{amsmath}
\usepackage{amsfonts}
\usepackage{amssymb}
\usepackage{graphicx}
\usepackage{amsthm}
\usepackage{needspace}
\usepackage[margin=0.35in,format=hang,font=small,labelfont=bf,justification=centering,singlelinecheck=false]{caption}
\usepackage{enumerate}
\usepackage{mathtools}
\usepackage{multirow}
\usepackage{rotating}
\usepackage[section]{algorithm}
\usepackage{algpseudocode}
\usepackage[section]{placeins}
\usepackage{setspace}
\usepackage{tikz,pgfplots}
\usepackage[toc]{glossaries}
\pgfplotsset{compat=1.7}
\usetikzlibrary{decorations.markings, arrows, calc, shapes, hobby}

\usepackage{romannum}
\usepackage{longtable}

\usepackage{xcolor}

\usepackage{longtable}

\newtheorem{prop}{Proposition}
\newtheorem{lem}{Lemma}

\newtheorem{thm}{Theorem}

\newtheorem{constr}{Construction}

\usepackage{lscape}

\def\centrearc[#1](#2)(#3:#4:#5)
{ \draw[#1] ($(#2)+({#5*cos(#3)},{#5*sin(#3)})$) arc (#3:#4:#5); }

\author{Olivia Reade \\
The Open University}
\title{Chiral maps of given hyperbolic type on $A_k$}

\begin{document}

\maketitle

\begin{abstract}
This paper proves the existence of a chiral map with alternating automorphism group for every hyperbolic type. We present a set of constructions using permutations for when at least one parameter is even, and call on previously known results for when both the valency and the face-length are odd.
\end{abstract}

\section{Introduction and context}

It is known that chiral maps exist for any given hyperbolic type $\{m ,n \}$. In this paper we use permutation groups to prove the following theorem, the motivation being that we want a simple automorphism group.

\begin{thm}\label{thm:Thispaper}
Given a hyperbolic type $\{m ,n \}$, there exists a chiral map of that type with alternating automorphism group $A_k$, for some degree $k$.
\end{thm}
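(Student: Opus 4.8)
The plan is to translate the topological statement into a purely group-theoretic one and then to exhibit the required groups by explicit permutations. An orientably regular map of type $\{m,n\}$ is determined by its orientation-preserving automorphism group $G$ together with a distinguished generating pair: a rotation $b$ about a vertex of order $n$, a rotation $a$ about an incident face of order $m$, and the edge-involution $ab$ of order $2$. Thus producing a chiral map of type $\{m,n\}$ with group $A_k$ amounts to finding even permutations $a,b \in A_k$ with
\[
o(a)=m,\qquad o(b)=n,\qquad o(ab)=2,\qquad \langle a,b\rangle = A_k ,
\]
subject to a chirality condition. Reflecting the map inverts every rotation, so the map is reflexible precisely when $G$ admits an automorphism sending $a\mapsto a^{-1}$ and $b\mapsto b^{-1}$ simultaneously. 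Since $\operatorname{Aut}(A_k)\cong S_k$ for all $k\ge 3$ with $k\neq 6$, the map is chiral if and only if no single $g\in S_k$ satisfies $gag^{-1}=a^{-1}$ and $gbg^{-1}=b^{-1}$ at once (the degree $k=6$ will not arise, or can be handled separately). Note that in $S_k$ every element is conjugate to its inverse, so the issue is never the existence of an inverting element for $a$ or for $b$ alone, but the impossibility of choosing one $g$ that inverts both.

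With this reformulation the construction splits according to the parity of the parameters. By map duality we may interchange the roles of $m$ and $n$, so when at least one parameter is even I shall assume the valency $n$ is even. For each admissible residue class of $(m,n)$ I would write $a$ and $b$ explicitly as products of cycles on a suitably chosen point set of size $k=k(m,n)$, arranging the supports so that the product $ab$ visibly collapses to an even number of disjoint transpositions — hence lies in $A_k$ and has order $2$ — while $a$ and $b$ are even of the prescribed orders. An even parameter is exactly what makes these involution-products easy to force by hand, which is why this regime admits direct construction.

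Three properties must then be verified for each construction. First, generation: I would show $\langle a,b\rangle$ is transitive and then primitive on the $k$ points, and exhibit inside it an element of small support (for instance a short word or power equal to a $3$-cycle or to a prime-length cycle fixing several points); a Jordan-type theorem then forces $\langle a,b\rangle\supseteq A_k$, and evenness of the generators gives equality. Second, the order conditions, which read off from the explicit cycle structure. Third, chirality: I would use a rigidity argument, choosing the construction so that $a$ (say) has a uniquely determined invariant feature — a single fixed point, or a cycle of a length occurring only once — which any inverting $g$ must preserve, and then show that the local configuration of $b$ around that feature makes simultaneous inversion impossible. Equivalently, the coset $g_0\,C_{S_k}(a)$ of elements inverting $a$ is shown to be disjoint from the coset inverting $b$.

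The main obstacle is to make all of this hold \emph{uniformly} across the residue classes of $m$ and $n$: the order and involution conditions are easy to impose locally, and generation is routine via Jordan's theorem, but securing irreflexibility — guaranteeing that no single permutation inverts both $a$ and $b$ — while simultaneously keeping the group equal to the full $A_k$ is the delicate balance, since the very features that rigidify inversion (fixed points, unique cycle lengths) tend to interfere with transitivity and primitivity. Finally, the remaining case in which both $m$ and $n$ are odd does not submit as cleanly to the involution-product trick, and there I would invoke the previously known existence results for chiral maps with alternating automorphism group rather than construct the generating pairs by hand.
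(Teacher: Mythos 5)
Your proposal takes essentially the same route as the paper: reduce by duality to one even parameter, build explicit even permutation generating pairs whose product is an involution, prove generation via transitivity, primitivity and Jordan/Jones cycle-with-fixed-points criteria, establish chirality by a rigidity argument exploiting $\operatorname{Aut}(A_k)\cong S_k$ (for $k\geq 7$), and settle the both-odd case by the known Conder--Huc\'{i}kov\'{a}--Nedela--\v{S}ir\'{a}\v{n} result, where the odd orders force even generators and hence the alternating group. The only differences are cosmetic (you normalise the valency even where the paper takes the face length $m$ even), and the paper of course still has to execute the case-by-case constructions and patch finitely many small types by a table --- precisely the uniformity difficulty you correctly anticipate.
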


For context, the following theorem was published by Conder, Huc\'{i}kov\'{a}, Nedela and \v{S}ir\'{a}\v{n} \cite{CHNS}. The authors constructed base examples using permutation groups (yielding the theorem below) which are then used to prove that, for a given hyperbolic type, there are infinitely many chiral maps of that type with simple underlying graph.

\begin{thm}(Conder, Huc\'{i}kov\'{a}, Nedela, \v{S}ir\'{a}\v{n})\label{thm:CHNS} 
There exists an orientably-regular but chiral map of type $\{m,n \}$ with automorphism group $A_k$ or $S_k$ for some $k$ for every hyperbolic pair $(n,m)$.
\end{thm}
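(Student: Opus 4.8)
The plan is to work entirely in the algebraic model of orientably regular maps. Such a map of type $\{m,n\}$ is encoded by a finite group $G$ together with an ordered pair of generators $x,y$ satisfying $x^{m}=y^{n}=(xy)^{2}=1$, with $x,y,xy$ of those exact orders; here $x$ rotates a face, $y$ rotates an incident vertex, and $xy$ reverses the shared edge. The mirror image is the triple $(G,x^{-1},y^{-1})$, so the map is chiral precisely when no automorphism of $G$ sends $x\mapsto x^{-1}$ and $y\mapsto y^{-1}$ simultaneously. When $G=A_{k}$ with $k\neq 6$, every automorphism of $G$ is conjugation by an element of $S_{k}$, so the whole problem reduces to exhibiting, for each hyperbolic pair, permutations $x,y\in A_{k}$ of orders $m,n$ with $(xy)^{2}=1$, generating $A_{k}$, for which no $g\in S_{k}$ satisfies both $x^{g}=x^{-1}$ and $y^{g}=y^{-1}$.

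First I would dispose of the case in which $m$ and $n$ are both odd by appealing to Theorem~\ref{thm:CHNS}. Any permutation of odd order is an even permutation, so generators $x$ of order $m$ and $y$ of order $n$ automatically lie in $A_{k}$; hence $\langle x,y\rangle\le A_{k}$, and the dichotomy ``$A_{k}$ or $S_{k}$'' supplied there collapses to $A_{k}$, since $S_{k}\not\le A_{k}$. Chirality is already part of that theorem, so this case needs nothing further.

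The substance of the argument is the case in which at least one of $m,n$ is even, where the parity trick fails and explicit permutations must be written down. The plan is to give a small family of constructions — one for $m$ even with $n$ odd, one for the symmetric situation, and one for both even — specifying $x$ and $y$ as products of $m$- and $n$-cycles on an explicit ground set of size $k$, arranged so that the product $xy$ collapses to an involution. For each family I would verify the defining relations by direct inspection of the cycle diagrams, and, crucially, choose the number of cycles so that both $x$ and $y$ are even permutations (an even number of $m$-cycles when $m$ is even, and similarly for $n$); this parity bookkeeping is exactly what pins the group down to $A_{k}$ rather than $S_{k}$.

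Two steps then remain, and these carry the real work. For generation I would show that $\langle x,y\rangle$ is primitive and contains an element of small support — for instance a $3$-cycle or a short cycle arising as a power of $x$, $y$, or a commutator — and invoke a classical criterion (Jordan's theorem, or the fact that a primitive group of degree $k$ containing a $3$-cycle is $A_{k}$ or $S_{k}$) to conclude $\langle x,y\rangle=A_{k}$. The genuine obstacle is chirality: I must rule out a single $g\in S_{k}$ that simultaneously inverts $x$ and $y$. The strategy is a rigidity argument — any $g$ with $x^{g}=x^{-1}$ is forced to reverse each $m$-cycle of $x$ and so is heavily constrained on the ground set, and I would then track a distinguished point through both $x$ and $y$ to show these constraints are incompatible with $y^{g}=y^{-1}$. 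I expect this to demand separate attention to the smallest parameter values, where coincidences in the cycle structure could accidentally admit an inverting symmetry, so a short finite check or a mild enlargement of the ground set in those cases is the likely endgame.
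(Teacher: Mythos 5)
The central defect is circularity: the statement you were asked to prove \emph{is} Theorem~\ref{thm:CHNS}, yet your first step disposes of the case of $m$ and $n$ both odd ``by appealing to Theorem~\ref{thm:CHNS}''. A proof cannot invoke its own conclusion, so as written the odd--odd case --- asymptotically a quarter of all hyperbolic pairs --- is simply not addressed. What you have in fact sketched is this paper's Theorem~\ref{thm:Thispaper}, the strictly stronger claim that $A_k$ alone always suffices: the proof in Section~\ref{sec:proof} performs exactly your parity reduction, quoting CHNS for the odd--odd case, observing that generators of odd orders $m$ and $n$ are even permutations so the $A_k$-or-$S_k$ dichotomy collapses to $A_k$, and then supplying explicit constructions when at least one parameter is even. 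Note also that the paper offers no proof of Theorem~\ref{thm:CHNS} at all; it is a cited result from \cite{CHNS}, so your attempt must stand on its own, and its only completed step leans on the very theorem in question.

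For the even cases your outline is strategically reasonable and indeed mirrors the method used both in \cite{CHNS} and in Sections~\ref{sec:nodd} and \ref{sec:neven} here: explicit permutations built from $m$- and $n$-cycles with $(xy)^2=1$; primitivity plus a Jordan-type criterion (the paper uses Jordan's Theorem~\ref{thm:JordanAltSym} where it can and Jones' Theorem~\ref{thm:JonesAltSym}, hence the classification, elsewhere); and a rigidity argument excluding any $g \in S_k$ inverting both generators, formalised here as Lemmas~\ref{lem:chiraldiag} and \ref{lem:chiraldiagsfixedpoint}. Your observation that this last step hinges on $Aut(A_k) \leq S_k$ is correct and important --- the paper's discussion of the Conder--Wilson example with $PSL(2,7)$ shows the argument genuinely fails without it. But none of this is executed: you write down no generating pair, verify orders, generation and non-invertibility for no concrete family, and the small-parameter exceptions, which in this paper consume Table~\ref{tab:missing} together with several ad hoc extensions of the constructions, are deferred to ``a short finite check''. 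As it stands the proposal is a plan rather than a proof, and the one case it claims to settle is settled circularly.
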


Asymptotically a quarter of the cases of Theorem \ref{thm:Thispaper} are therefore already proved by Theorem \ref{thm:CHNS}: when both $m$ and $n$ are odd the statement of Theorem \ref{thm:Thispaper} is an immediate corollary.

We also note that we do not need to address the case when $n=3$ since the following much stronger result has been known for some years thanks to Bujalance, Conder and Costa \cite{BujConCos} who proved the following.

\begin{thm}{(Bujalance, Conder, Costa)}\label{thm:BCC}
For all but finitely many $k$, for each $m\geq 7$ there exists a chiral map $\mathcal{M}$ of type $\{m, 3 \}$ with $Aut(\mathcal{M}) = A_k$.
\end{thm}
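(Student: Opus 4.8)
The plan is to realize each such chiral map group-theoretically and then to produce the required generators inside $A_k$ by coset-diagram methods. Recall that a map of type $\{m,3\}$ with orientation-preserving automorphism group $G$ corresponds to a pair of generators $x,y$ of $G$ with $x$ of order $m$, $y$ of order $3$, and $xy$ of order $2$; equivalently $G$ is a smooth quotient of the $(2,3,m)$ von Dyck group $\Delta^+=\langle x,y\mid x^m=y^3=(xy)^2=1\rangle$ in which $x$ and $y$ retain their orders. The hypothesis $m\geq 7$ guarantees $\tfrac12+\tfrac13+\tfrac1m<1$, so $\Delta^+$ is an infinite hyperbolic triangle group, which is exactly what makes an abundance of finite quotients available. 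Such a map is chiral precisely when $G$ admits no automorphism sending $(x,y)\mapsto(x^{-1},y^{-1})$. Taking $G=A_k$ with $k\neq 6$, every automorphism is induced by conjugation in $S_k$, so the chirality requirement becomes the purely combinatorial condition that no single $g\in S_k$ satisfies both $gxg^{-1}=x^{-1}$ and $gyg^{-1}=y^{-1}$.

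First I would construct explicit permutations $x,y\in S_k$ with the prescribed orders and with $(xy)^2=1$, building them from coset diagrams for $\Delta^+$ in the style of Conder's constructions. The strategy is to fix a small family of base ``gadget'' diagrams and to concatenate copies of them along handles; each concatenation increases the number of points by a fixed amount, so a finite collection of gadgets covering all residues modulo the handle period yields representations of $\Delta^+$ on $k$ points for every sufficiently large $k$. Connectedness of the diagram gives transitivity of $\langle x,y\rangle$, and I would then upgrade transitive to primitive and finally to $A_k$ or $S_k$ by a Jordan-type argument: a primitive group containing an element of suitably small support (read off from a gadget) must be the full symmetric or alternating group. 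A parity computation on $x$ and $y$ then pins the group down to $A_k$.

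The genuinely delicate step is enforcing chirality uniformly across the whole infinite family. My approach is to exhibit a fixed word $w=w(x,y)$ whose cycle type in the constructed representation differs from the cycle type of the mirror word $w(x^{-1},y^{-1})$. Since any $g$ inverting both generators would conjugate $w(x,y)$ to $w(x^{-1},y^{-1})$ and hence preserve cycle type, such an asymmetry certifies that no simultaneous inversion exists, i.e.\ that the map is chiral. The hard part is to design the base gadgets so that a suitable asymmetric word is present and, crucially, so that its cycle-type asymmetry survives every concatenation, guaranteeing chirality for all the values of $k$ produced rather than for sporadic ones. Balancing this against the competing demands of generating the full alternating group and of covering almost all $k$ is where I expect the main effort to lie.

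Finally, the degenerate case $k=6$, where $\mathrm{Aut}(A_6)$ is strictly larger than the group of conjugations by $S_6$, together with any finite set of small or exceptional $k$ left uncovered by the gadget bookkeeping, can simply be excluded, since the statement only asserts the conclusion for all but finitely many $k$.
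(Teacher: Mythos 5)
This statement is imported: the paper offers no proof of Theorem \ref{thm:BCC}, which is quoted from Bujalance, Conder and Costa \cite{BujConCos}, so the fair comparison is with the method of that cited work. Your framework is the right one and essentially matches it in spirit: realise the map as a smooth quotient of the $(2,3,m)$ triangle group, build the quotient in $A_k$ via Conder-style coset diagrams composed along handles, use a Jordan-type theorem to force the group up to $A_k$, and certify chirality by a conjugation-invariant asymmetry, exploiting that for $k \neq 6$ every automorphism of $A_k$ is induced by conjugation in $S_k$. Indeed your cycle-type certificate for a fixed word $w(x,y)$ versus $w(x^{-1},y^{-1})$ is a sound sufficient criterion, and it is the same kind of obstruction this paper formalises in Lemmas \ref{lem:chiraldiag} and \ref{lem:chiraldiagsfixedpoint}.

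The genuine gap is that everything load-bearing is deferred. The content of the theorem consists precisely of the steps you postpone: exhibiting explicit base diagrams in which $x$, $y$ and $xy$ have orders $m$, $3$ and $2$; verifying primitivity (transitivity from connectedness is the trivial part); locating a cycle to which Jordan's theorem actually applies --- note that ``an element of suitably small support'' is not by itself a usable hypothesis, one needs a cycle of prime length fixing at least three points as in Theorem \ref{thm:JordanAltSym}, or Jones's CFSG-based Theorem \ref{thm:JonesAltSym}; designing the word $w$ and proving its cycle-type asymmetry survives \emph{every} composition along handles; and doing the arithmetic bookkeeping showing the resulting degrees cover all sufficiently large $k$. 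You candidly flag this as ``where the main effort will lie,'' but that effort is the proof, so what you have is a plausible research plan rather than an argument. One further point you should make explicit: with the quantifier order as printed (``for all but finitely many $k$, for each $m \geq 7$''), the statement cannot hold literally, since element orders in $A_k$ are bounded (by Landau's function), so a fixed $k$ admits only finitely many $m$; the correct reading, and the one your per-$m$ gadget bookkeeping would deliver, is that for each $m \geq 7$ the exceptional set of $k$ is finite and may depend on $m$.
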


Another powerful theorem regarding chiral hypermaps is due to Jones whose proof uses spaces of differentials on Riemann surfaces and homology groups \cite{JonCC}. A corollary is as follows.

\begin{thm}{(Jones)}\label{thm:Joneschiralcovers}
Every reflexible map of arbitrary hyperbolic type is smoothly covered by infinitely many chiral maps.
\end{thm}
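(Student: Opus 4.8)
The plan is to translate everything into the language of the full triangle group $\Gamma = \Gamma[m,n]$ and its orientation-preserving subgroup $\Gamma^+ = \Delta(2,m,n)$ of index two. A reflexible map $\mathcal{M}$ of type $\{m,n\}$ is $\mathbb{H}/M$ for a torsion-free map subgroup $M \triangleleft \Gamma$, with underlying surface $S = \mathbb{H}/M$, full automorphism group $G = \Gamma/M$, and orientation-preserving subgroup $G^+ = \Gamma^+/M$ (I treat the orientable case; the general case reduces to it by passing to the orientable double cover). Because $M$ is torsion-free, every subgroup $N \le M$ gives an unramified covering $\mathbb{H}/N \to S$, automatically of the same type $\{m,n\}$ and hence a smooth cover; it is orientably-regular exactly when $N \triangleleft \Gamma^+$, and chiral exactly when $N \triangleleft \Gamma^+$ but $N \not\triangleleft \Gamma$. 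So the theorem reduces to producing infinitely many $N$ with $N \triangleleft \Gamma^+$, $N \le M$, and $N \not\triangleleft \Gamma$.

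First I would manufacture such $N$ from homology. For a prime $p$, set $V_p = M/M^p[M,M] \cong H_1(S;\mathbb{F}_p)$, an $\mathbb{F}_p[G]$-module under the conjugation action of $G$. Subgroups $N$ with $M^p[M,M] \le N \le M$ correspond bijectively to $\mathbb{F}_p$-subspaces $W = N/M^p[M,M]$ of $V_p$, and $N$ is normal in $\Gamma^+$ (resp.\ $\Gamma$) precisely when $W$ is $G^+$-invariant (resp.\ $G$-invariant). It therefore suffices to find, for infinitely many primes $p$, an $\mathbb{F}_p[G^+]$-submodule of $V_p$ that is not stable under an orientation-reversing element $\tau \in G \setminus G^+$; by Clifford theory this exists as soon as $V_p$ has a $G^+$-irreducible constituent $U$ with $U \not\cong U^{\tau}$, where $U^{\tau}$ denotes the twist by conjugation by $\tau$.

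The differentials enter in producing such a constituent. Over $\mathbb{C}$ the Hodge decomposition gives $H_1(S;\mathbb{C}) = H^{1,0} \oplus H^{0,1}$, where $H^{1,0} = H^0(S,\Omega^1)$ is the space of holomorphic differentials; each element of $G^+$ acts complex-linearly preserving this splitting, while the anti-holomorphic $\tau$ pulls holomorphic forms back to anti-holomorphic ones and so interchanges the two summands. Thus $H^{1,0}$ is already a $G^+$-submodule with $\tau(H^{1,0}) = H^{0,1} \ne H^{1,0}$: conceptually the holomorphic differentials witness chirality directly. Concretely $(H^{1,0})^{\tau} \cong H^{0,1}$ as $G^+$-modules, so whenever $H^{1,0} \not\cong H^{0,1}$ as $G^+$-modules there must be a constituent $U$ of $H^{1,0}$ with $U \not\cong U^{\tau}$, which is exactly the input required above. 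I would compute the $G^+$-module structure of $H^{1,0}$ from the ramification data $2, m, n$ of $S \to S/G^+$ using the Chevalley--Weil formula.

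Having located a complex (non-self-conjugate) constituent over $\mathbb{C}$, I would descend to finite fields by reducing modulo primes $p$ not dividing $|G|$ that split appropriately in the field of character values; for all but finitely many such $p$ the constituent $U$ with $U \not\cong U^{\tau}$ persists in $V_p$, and there are infinitely many such $p$ by Chebotarev's theorem. Each gives a chiral smooth cover, and distinct primes yield covers of distinct degree, hence infinitely many non-isomorphic chiral maps. The main obstacle is the existence claim at the heart of the argument: one must guarantee that the holomorphic differentials are never $\tau$-self-conjugate, i.e.\ $H^{1,0} \not\cong H^{0,1}$ as $G^+$-modules, for every hyperbolic type. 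If the given surface $S$ is too symmetric for this to hold, I expect to first replace $\mathcal{M}$ by a regular (still reflexible) cover on which the differential representation becomes chiral, and the crux is then to show both that such a cover can always be found and that the Chevalley--Weil multiplicities for hyperbolic $\{m,n\}$ indeed force the required asymmetry.
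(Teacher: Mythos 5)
First, a point of comparison: this paper contains no proof of Theorem \ref{thm:Joneschiralcovers} at all --- it is imported from Jones \cite{JonCC} (with an elementary alternative route in \cite{ReaSirCC}), and the paper only records that Jones's proof ``uses spaces of differentials on Riemann surfaces and homology groups''. Your reconstruction is therefore measured against that source, and your scaffolding matches it: the dictionary between subgroups $M^p[M,M] \leq N \leq M$ and subspaces $W \leq V_p = M/M^p[M,M]$, the normality criteria ($N \triangleleft \Gamma^+$ for orientable regularity, $N \not\triangleleft \Gamma$ for chirality, both legitimate because $M^p[M,M]$ is characteristic in $M \triangleleft \Gamma$ and $M$ acts trivially on $V_p$), smoothness of the covers from torsion-freeness of $N$, and infinitude by varying $p$ (distinct $p$-power covering degrees) are all correct and are essentially Jones's set-up.

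The genuine gap is at what you yourself call the crux: you make the argument hinge on a $G^+$-irreducible constituent $U$ with $U \not\cong U^{\tau}$, equivalently on $H^{1,0} \not\cong H^{0,1}$ as $G^+$-modules, concede you cannot establish it, and offer only the unproven hope of passing to a deeper reflexible cover. Indeed this hypothesis \emph{can} fail: $H^{0,1}$ is the conjugate module, so the two are isomorphic whenever the character of $G^+$ on $H^{1,0}$ is real-valued. But the hypothesis is stronger than needed. Chirality only requires a $G^+$-invariant subspace of $V_p$ that is not $\tau$-invariant \emph{as a subspace}, and over $\mathbb{C}$ you already exhibited the witness: $\tau$ interchanges $H^{1,0}$ and $H^{0,1}$, both nonzero since hyperbolic type forces $g \geq 2$. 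If some constituent $\chi$ has $\chi^{\tau} \neq \chi$, its isotypic component is the desired submodule. If instead $\chi^{\tau} = \chi$ for every constituent, then each isotypic component $W_{\chi} \cong U_{\chi} \otimes M_{\chi}$ is $\tau$-stable, and $\tau$ swaps the two subspaces of $M_{\chi}$ cut out by $H^{1,0}$ and $H^{0,1}$; these are complementary and of equal dimension, hence both nonzero whenever $\chi$ occurs, so $\tau$ acts non-scalarly on $M_{\chi}$ and moves some submodule $U_{\chi} \otimes T$. Either way the $G$-module $H_1(S;\mathbb{C})$ has a $G^+$-submodule that is not $G$-invariant, and since this alternative is an isomorphism-invariant property of the semisimple $G$-module, it persists in $V_p \otimes \bar{\mathbb{F}}_p$ for every $p \nmid |G|$ (the Brauer character being the ordinary character) and is realizable over $\mathbb{F}_p$ once $p \equiv 1$ modulo the exponent of $G$ --- an infinite set of primes by Dirichlet, with no Chevalley--Weil computation, Chebotarev argument, or auxiliary cover required. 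With that substitution your outline closes, and closes along essentially Jones's original lines.
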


This statement also has a more elementary algebraic and map-theoretic proof presented in \cite{ReaSirCC} by Reade and \v{S}ir\'{a}\v{n}, and had it been known at the time, Theorem \ref{thm:Thispaper} would have provided a great shortcut for this alternative proof of Theorem \ref{thm:Joneschiralcovers}. We hope Theorem \ref{thm:Thispaper} may be useful in further situations where having a simple group makes things simpler.

The structure of the paper is as follows. In section \ref{sec:background} we recall some background information about chiral maps and permutation groups on which we will rely later. 
Since the dual of a chiral map is also chiral, we may work up to duality, allowing us to assume thenceforth that $m$ is even.
In sections \ref{sec:nodd} and \ref{sec:neven}, where $n$ is respectively odd or even, we present the constructions which then cover all but a small finite number of cases. For ease of reference, in each case we prove that the resulting group generated is alternating, and the associated map is chiral of the expected type. In section \ref{sec:proof} we prove the main theorem by calling on the work in the previous sections and addressing the missing cases.

\section{Background}\label{sec:background}

\subsection{Chiral maps}

A map is a cellular embedding of a connected graph on a surface such that the complement of the image of the graph is a disjoint union of regions each of which is homeomorphic to an open disc. Each directed edge of a map is known as an \emph{arc}. This paper concerns  orientably-regular maps - highly symmetric maps on orientable surfaces - which may be identified with their orientation-preserving automorphism groups.

For a given orientation of the surface and a given arc, the automorphisms $r$ and $s$ are defined to be the automorphisms of the map which act respectively and locally as the natural `one-step' rotation about the corresponding face and vertex.
The orientation-preserving automorphism group $G= \langle r, s \rangle$ of an orientably-regular map acts transitively on the arcs of the map and has partial presentation $G = \langle r,s \mid r^m, s^n, (rs)^2, \dots \rangle$. Such a map $\mathcal{M}$ is denoted $\mathcal{M}(G:r,s)$ and, using Sch\"{a}fli notation, has type $\{ m, n \}$.
An orientably-regular map $\mathcal{M}(G:r,s)$ is \emph{reflexible} if and only if there is an automorphism of the group $G = \langle r, s \rangle$ which inverts the generators. An orientably-regular map which is not reflexible is said to be \emph{chiral}. The study of symmetric maps on surfaces, including chiral maps, is well-established and further information can be found in \cite{JonSing, S}.

For the purposes of this paper it will be natural for us to work with a generating pair of elements for the group $G$ such that one of the generators is the involution $t := rs$, and the other is either $r$ or $s$. In particular the map is reflexible (not chiral) if and only if there is an automorphism of $G$ such that $t$ is fixed (which is equivalent to inverting $t$ since it is self-inverse) while the generator $r$ (or, equivalently, $s$) is inverted. As mentioned before, we will capitalise on the related fact that the dual of a chiral map of type $\{ m, n \}$ is chiral of type $\{n, m \}$.

\subsection{Permutation groups and primitivity}

Throughout this paper we define the group $G$ using permutations of some degree $k$, and illustrate the definitions using diagrams as follows.
Each of our permutation diagrams consists of $k$ points, is connected and defines the group $G = \langle r,t \rangle = \langle s,t \rangle$. Being an involution, $t$ is self-inverse and is the product of a set of disjoint transpositions, so $t$ is shown in the diagrams as a matching using blue edges. The other generator, be it $r$ or $s$, is defined according to its order and is illustrated using red cycles and red edges. For aesthetics, all loops to indicate a fixed point are omitted.

This is a method which has been used in the past to prove the existence of certain types of genuinely biregular maps, namely chiral rotary maps of type $\{ m, n \}$ in \cite{CHNS}, and truly edge-biregular maps of arbitrary feasible type in \cite{Thesis}, whose underlying automorphism group is either alternating or symmetric. As the authors in those works did, we make use of Jones' generalisation of Jordan's theorem, whose proof relies on the classification of finite simple groups, specifically the stated Corollary 1.3 in \cite{Jones}. For future reference, we state the relevant part of this powerful theorem:

\begin{thm}{(Jones)}\label{thm:JonesAltSym}
Let $G$ be a primitive permutation group of finite degree $k$, containing a cycle with $f$ fixed points. Then $G \geq \text{A}_k$ if $f\geq 3$.
\end{thm}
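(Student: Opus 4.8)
The plan is to obtain this as a corollary of the full classification of primitive permutation groups containing a cycle, upgrading the classical theorem of Jordan with the help of the classification of finite simple groups. Jordan's original result already gives the conclusion when the cycle has prime length, that is, when a $p$-cycle is present with $p \leq k-3$; the whole difficulty lies in removing the primality hypothesis and allowing a cycle of arbitrary length $\ell = k - f$ with $f \geq 3$.

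One might first hope to reduce the general case to Jordan's theorem by passing to a suitable power of the given cycle. This fails at the outset: a power of a single long cycle splits into several disjoint cycles rather than remaining one cycle with fixed points, so the hypothesis of Jordan's theorem is destroyed. This is precisely why an elementary argument is not available and the classification is needed. I would therefore invoke the O'Nan--Scott theorem together with the classification of finite simple groups to enumerate the possible primitive groups, and then appeal to the (CFSG-dependent) classification of those which contain a cycle. Apart from $A_k$ and $S_k$, this list is short: subgroups of $AGL(1,p)$ of prime degree, groups lying between $PGL(d,q)$ and $P\Gamma L(d,q)$ acting on projective points, and a small number of sporadic actions including certain Mathieu groups.

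The concluding step is to read off the number of fixed points of a cycle in each exceptional family. In the prime-degree and projective Singer cases the relevant cycle is full and fixes no points; the sharpest examples arise from $P\Gamma L(2,q)$ acting on the projective line, where a split-torus element is a single $(q-1)$-cycle fixing exactly the two points of a diagonalisable axis; and the remaining sporadic cases likewise admit only cycles fixing at most two points. Consequently the hypothesis $f \geq 3$ rules out every exceptional family, and the group must be $A_k$ or $S_k$, each of which contains $A_k$.

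The main obstacle is the classification itself, which is a serious CFSG-dependent undertaking rather than a single clean argument. Within the deduction of the corollary, the delicate part is verifying the uniform bound $f \leq 2$ across the projective and sporadic families; in particular one must confirm that $PGL(2,q)$ on the projective line is exactly the boundary case, which is what makes $f \geq 3$, rather than $f \geq 2$, the correct threshold for forcing $A_k$.
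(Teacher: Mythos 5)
The paper gives no proof of this statement at all: it is quoted as an external result, Corollary 1.3 of Jones \cite{Jones}, whose CFSG-based argument is precisely the route you sketch. Your outline --- noting that taking powers of the cycle destroys Jordan's hypothesis, passing instead to the classification of primitive groups containing a cycle, and checking that every exceptional family ($AGL(1,p)$-type groups, the projective groups between $PGL(d,q)$ and $P\Gamma L(d,q)$, and the Mathieu-type actions) only admits cycles with $f\leq 2$, with $PGL(2,q)$ on the projective line as the boundary case --- matches the cited source, so this is essentially the same approach.
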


\smallskip

For some of the propositions in this paper, namely Propositions \ref{prop:noddmis4}, \ref{prop:noddmis6}, \ref{prop:noddmis8}, \ref{prop:nevenmis4}, \ref{prop:nevenmis6} and \ref{prop:nevenmis8}, we will be able to avoid relying on the classification of finite simple groups by using Jordan's original theorem:

\begin{thm}{(Jordan)}\label{thm:JordanAltSym}
Let $G$ be a primitive permutation group of finite degree $k$, containing a cycle of prime length which fixes at least three points. Then $G \geq \text{A}_k$.
\end{thm}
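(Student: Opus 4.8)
The statement is Jordan's classical theorem, and since the paper stresses avoiding the classification of finite simple groups here, the plan is to give an elementary argument by reducing the given prime-length cycle to a $3$-cycle (or, when $p=2$, to a single transposition). Let $\Omega$ denote the set of $k$ points and write $x$ for the cycle, of prime length $p$, with support $\Delta\subseteq\Omega$, so $|\Delta|=p$ and the fixed-point set $\Omega\setminus\Delta$ has size at least $3$. First I would isolate two standard facts about primitive groups that serve as the endgame: (i) a primitive subgroup of $S_k$ containing a transposition equals $S_k$, and (ii) a primitive subgroup of $S_k$ containing a $3$-cycle contains $A_k$. Both are proved the same way: the conjugates of the short element generate a normal subgroup whose orbits, together with the ``meeting graph'' on the moved points, would yield a nontrivial block system unless that subgroup is everything, so primitivity forces the conclusion. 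The case $p=2$ is then immediate, since $x$ is itself a transposition and (i) applies.

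So assume $p$ is odd, whence $x\in A_k$, and the plan is to manufacture a $3$-cycle inside $G$ and invoke (ii). As $G$ is primitive it is transitive and $\Delta$ is not a block, so I can choose $g\in G$ for which the support $\Delta^g$ of the conjugate $x^g$ meets $\Delta$ in a proper, nonempty intersection. Passing to a commutator $[x,x^g]$ (or to a suitable product $x^{-a}(x^g)^{b}$) produces a nonidentity element whose support lies in $\Delta\cup\Delta^g$; a direct check shows that any point of $\Delta\setminus\Delta^g$ whose $x$-image also lies outside $\Delta^g$ is fixed, so the support shrinks as the overlap shrinks. Iterating this reduction, each time choosing a conjugate meeting the current support minimally and recomputing the commutator while tracking the large fixed-point set, I would drive the support down to a $3$-cycle and finish with (ii).

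The main obstacle is controlling the overlaps using \emph{only} primitivity: transitivity lets me move $\Delta$ around, but guaranteeing that some conjugate meets $\Delta$ in a prescribed small intersection (and that the resulting commutator is genuinely nontrivial, rather than collapsing to the identity or to a cycle that is still too long) is exactly where primitivity, as opposed to mere transitivity, must be invoked. I would argue that if no conjugate met $\Delta$ in the desired way, the intersection pattern of the supports would be invariant enough to define a $G$-congruence, contradicting primitivity. A cleaner packaging of the same idea is the theory of Jordan sets: $\Delta$ is a Jordan set for $G$ because $\langle x\rangle$ fixes $\Omega\setminus\Delta$ pointwise and acts transitively on $\Delta$, and the structure theory of primitive groups with a proper Jordan set forces multiple transitivity and ultimately $A_k$. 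Either way the delicate point is identical, and the primality of $p$ is precisely what prevents $\langle x\rangle$ from having intermediate invariant structure on $\Delta$, keeping the reduction under control.
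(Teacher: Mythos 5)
The first thing to note is that the paper does not prove this statement at all: it is quoted as Jordan's classical theorem precisely so that the constructions can cite it as a black box (with Jones's CFSG-based Theorem~\ref{thm:JonesAltSym} as the companion result), so your argument has to stand entirely on its own --- and it does not. The fatal step is the commutator-shrinking reduction. Your plan requires, at each stage, a conjugate $x^g$ whose support $\Delta^g$ meets the current support $\Delta$ in a prescribed \emph{small} set. But the hypothesis only guarantees $f\geq 3$ fixed points, so $p$ may be as large as $k-3$; then for \emph{every} $g\in G$ one has $|\Delta\cap\Delta^g|\geq 2p-k=p-f$, an enormous overlap, and no appeal to primitivity can manufacture a small intersection, because none exists. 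Moreover, when the overlap is large, $[x,x^g]$ has support contained in $\Delta\cup\Delta^g$ but need not have support smaller than $p$, so your iteration has no decreasing measure: the observation that points of $\Delta\setminus\Delta^g$ with $x$-image outside $\Delta^g$ are fixed buys you nothing when $\Delta\setminus\Delta^g$ has at most $f\leq 3$ points. Commutator and minimal-degree arguments of this type do prove Jordan-style theorems, but only in the regime where the support is small relative to $k$ (fixed cycle length, degree $\to\infty$) --- which, ironically, covers the cycles actually used in this paper ($3$-, $5$-, $7$- and $11$-cycles fixing many points) but not the theorem as stated, whose whole point is to allow $p$ comparable to $k$.

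Your fallback via Jordan sets is the correct classical route, but as written it is circular: ``the structure theory of primitive groups with a proper Jordan set forces multiple transitivity and ultimately $A_k$'' \emph{is} the theorem you were asked to prove --- the prime-cycle theorem is one of the principal outputs of that theory, not an input to it. A genuine classical proof (see Wielandt, \emph{Finite Permutation Groups}, \S 13) must actually carry out two nontrivial steps: first, use the fact that $\langle x\rangle$ fixes $\Omega\setminus\Delta$ pointwise and acts regularly --- hence, by primality of $p$, primitively --- on $\Delta$ to boost $G$ to $(f+1)$-fold transitivity by Jordan's theorem on primitive Jordan sets; second, a separate argument exploiting the prime degree of the induced action to force $G\geq A_k$, where the hypothesis $f\geq 3$ is essential (for $f\leq 2$ there are genuine exceptions, e.g.\ groups of Lie type of degree $p+1$). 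Your endgame facts --- primitive plus a transposition gives $S_k$, primitive plus a $3$-cycle gives $G\geq A_k$ --- are correct and standard, and they dispose of $p=2$ and $p=3$; but for odd $p\geq 5$ the bridge between the $p$-cycle and the $3$-cycle is exactly the content of Jordan's theorem, and your sketch supplies no working mechanism for it.
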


\subsection{Recognising chirality in permutation diagrams}

Remember that the map $\mathcal{M}(G:r,s)$ is reflexible (not chiral) if and only if there is an automorphism of the group $G$ such that the generators are inverted. In particular if the operation which inverts $s$ and fixes $t$ is not an automorphism of the group $G$ then the map is chiral.

Wild claim: The constructions we use are such that one can, by already knowing the specific structure of the group $G = A_k$, see by eye, from the lack of symmetry in the corresponding diagram, that the resulting orientably regular map is chiral. To formalise we use the following lemmas.

Each of the constructions in this paper has the group $G$ being the alternating group $A_k$ of the same degree as the diagram. In particular each automorphism $\phi$ of the group $G$ is equivalent to conjugation by an element of the symmetric group $S_k$ of that same degree, that is by a specific relabelling $\pi_\phi$ of the points on the permutation diagram.

\begin{lem}\label{lem:chiraldiag}
Let $G$ be such that $\mathcal{D}$ is a faithful permutation representation of $G = \langle s,t \mid s^n, t^2, (st)^m , \dots \rangle$ which is defined on $k$ points. Further suppose that $Aut(G) \leq S_k$. Then if the following circumstances are satisfied, this implies that the map $\mathcal (G:r,s)$ is chiral.
\begin{list}{•}{•}
\item There is a unique point $\zeta$ in $\mathcal{D}$ which is fixed by $s^bt$ for some non-zero $b$, such that $\zeta$ is not fixed by $t$. Note that such points are easy to spot in permutation diagrams.
\item There is an integer $c$ such that $\zeta s^c$ is fixed by $t$ and $\zeta t s^{-c}$ is not.
\end{list}
\end{lem}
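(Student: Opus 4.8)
The plan is to argue by contradiction: I assume the map $\mathcal{M}(G:r,s)$ is reflexible and derive a violation of the two bulleted conditions. Reflexibility means there is an automorphism $\phi$ of $G$ with $\phi(s)=s^{-1}$ and $\phi(t)=t$. Since $\mathrm{Aut}(G)\le S_k$, this $\phi$ is conjugation by some permutation $\pi=\pi_\phi\in S_k$, say $\phi(g)=\pi^{-1}g\pi$. Translating the two defining properties of $\phi$ into relations on $\pi$, I get that $\pi$ commutes with $t$ and that $s\pi=\pi s^{-1}$, whence $s^{c}\pi=\pi s^{-c}$ for every integer $c$. I will also use repeatedly the elementary transfer principle that if a point $x$ is fixed by $g\in G$ then $x\pi$ is fixed by $\phi(g)=\pi^{-1}g\pi$; this is immediate from $(x\pi)(\pi^{-1}g\pi)=xg\pi=x\pi$.

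The crux is to pin down the image $\zeta\pi$. First I record that the point $\zeta t$ plays the mirror role to $\zeta$: from $\zeta s^{b}t=\zeta$ one gets $\zeta s^{b}=\zeta t$, and hence $(\zeta t)s^{-b}t=\zeta t$, so $\zeta t$ is fixed by $s^{-b}t$ and, since $\zeta$ is not fixed by $t$, the point $\zeta t$ is not fixed by $t$ either. More is true: the map $x\mapsto xt$ is an involutory bijection between the set of points fixed by $s^{b}t$ but not by $t$ and the set of points fixed by $s^{-b}t$ but not by $t$. The uniqueness hypothesis says the first set is the singleton $\{\zeta\}$, so the second is the singleton $\{\zeta t\}$. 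Now the transfer principle applied to $\zeta s^{b}t=\zeta$ shows that $\zeta\pi$ is fixed by $\phi(s^{b}t)=s^{-b}t$ and not by $t$; hence $\zeta\pi$ lies in that second singleton, forcing $\zeta\pi=\zeta t$.

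With $\zeta\pi=\zeta t$ in hand, the second condition finishes the argument. Using $s^{c}\pi=\pi s^{-c}$ and $\zeta\pi=\zeta t$, I compute $(\zeta s^{c})\pi=(\zeta\pi)s^{-c}=\zeta t s^{-c}$. But $\zeta s^{c}$ is fixed by $t$ by the second bullet, so by the transfer principle its image $\zeta t s^{-c}$ must be fixed by $\phi(t)=t$. This contradicts the second bullet's assertion that $\zeta t s^{-c}$ is not fixed by $t$. Hence no such $\phi$ exists and the map is chiral.

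The step I expect to be the main obstacle is establishing $\zeta\pi=\zeta t$ rather than $\zeta\pi=\zeta$: it is precisely here that the uniqueness hypothesis must be used in its sharp form, via the $t$-bijection between the $s^{b}t$- and $s^{-b}t$-fixed (non-$t$-fixed) points, and getting this identification wrong collapses the whole argument. A secondary point requiring care is the bookkeeping of the action and conjugation conventions (points acted on the right, $\phi(g)=\pi^{-1}g\pi$), since a sign slip in $s^{c}\pi=\pi s^{-c}$ would send the final computation to $\zeta t s^{c}$ and lose the contradiction.
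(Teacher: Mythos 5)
Your proof is correct and follows essentially the same route as the paper's: assume a reflecting automorphism $\phi$ realised by a relabelling $\pi\in S_k$, use the uniqueness hypothesis to force $\zeta\pi=\zeta t$, and then contradict the second bullet via $(\zeta s^{c})\pi=\zeta t s^{-c}$. The only (harmless) difference is that you establish the uniqueness of the $s^{-b}t$-fixed, non-$t$-fixed point intrinsically via the involution $x\mapsto xt$, whereas the paper deduces it directly from $\pi$ conjugating $s^{b}t$ to $s^{-b}t$.
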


These circumstances highlight the lack of reflective symmetry in the graph of the permutation diagram. Reflexibility of the underlying map, determined by a hypothetical orientation-reversing automorphism $\phi \in Aut(G)$ would demand that the two points $\zeta$ and $\zeta t$ were swapped by the relabelling $\pi \in S_k$ (induced by $\phi$) of the underlying permutation set. Furthermore it would also be the case that $\zeta s^c$ and $\zeta t s^{-c}$ were swapped by the same relabelling, but this turns out to be impossible by the second condition.

\begin{proof}
Let the conditions in the lemma be satisfied, and define $ \eta :=\zeta t$, noting also that $\eta = \zeta s^b$ is distinct from $\zeta$.

The map $\mathcal (G:r,s)$ is reflexible if and only if there is an involutory automorphism $\phi \in Aut(G)$ such that $\phi : s \to s^{-1}$ and $\phi : t \to t$. Suppose (for a contradiction) that this is the case, and that the corresponding relabelling of the points is $\pi \in S_k$ such that $s^\pi = s^{-1}$ and $t^\pi = t$.

Since there is the unique point $\zeta$ which is fixed by $s^b t$ (but not by $t$), and $\phi$ is an automorphism, there must be a unique point which is fixed by the image of $s^b t$ under $\phi$, that is which is fixed by $s^{-b} t$ (and not fixed by $ \phi(t) = t$). Notice that $\eta$ is, by its definition, a fixed point of $s^{-b}t$ which is not fixed by $t$ and hence $\pi$ interchanges the points labelled $\zeta$ and $\eta$.

Now $\pi$ is also necessarily such that $\zeta s^c$ is mapped to $ (\zeta \pi) \phi(s^c) = \eta s^{-c} = \zeta t s^{-c}$. But, by the conditions in the lemma, this means that $\pi$ maps a point which is fixed by $t$ to a point which is not fixed by $t$, which contradicts the definition of $\pi$ with respect to the $t$-fixing automorphism $\phi$.

We conclude that $(G : r,s)$ cannot be reflexible, and so it is chiral.
\end{proof}

Note that this proof relies heavily on the automorphism group of the group $G$ being contained within $S_k$. There are examples of non-symmetric permutation diagrams generated by $s$ and $t$ such that there is no orientation-reversing symmetry evident in the permutation diagram, but the resulting map is reflexible. An example is highlighted by Conder and Wilson in \cite{ConWil}. Using our notation this is equivalent to $s:=(123)(456)$ and $t:=(14)(67)$, and the difference here is that the resulting group $G = \langle s,t \rangle \cong PSL(2,7)$ which has automorphism group $PGL(2,7)$. Meanwhile, $PGL(2,7)$ itself, since it contains an element of order $8$, does not embed into $S_7$. There is no relabelling of the diagram which induces the orientation-reversing automorphism, and yet there is an element of $PGL(2,7)$ which inverts the corresponding generators $S$ and $T$ where $\langle S, T \rangle = PSL(2,7)$, so the map $(G ; r,s )$ is reflexible.

Mercifully, our aim is to construct examples whose automorphism group is $A_k$ which is known to have $S_k$ as its automorphism group (when $k \geq 7$), and as such this increase in minimum degree between the group $G$ and its automorphism group will not be a problem for us.

Some of our diagrams do not demonstrate the properties listed in the above lemma (even after substituting $r$ for $s$) and for most of those cases we may rely on the following lemma which is a very minor modification of the above, and whose proof is analogous.

\begin{lem}\label{lem:chiraldiagsfixedpoint}
Let $G$ be such that $\mathcal{D}$ is a transitive faithful permutation representation of $G = \langle s,t \mid s^n, t^2, (st)^m , \dots \rangle$ which is defined on $k$ points. Further suppose that $Aut(G) \leq S_k$. Then if the following circumstances are satisfied, this implies that the map $\mathcal (G:r,s)$ is chiral.
\begin{list}{•}{•}
\item There is a unique point $\zeta$ in $\mathcal{D}$ which is fixed by $s^bt$ for some non-zero $b$, such that $\zeta$ is not fixed by $t$.
\item There is an integer $c$ such that $\zeta s^c t$ is fixed by $s$ (or $s^2$) while $\zeta t s^{-c}t$ is not fixed by $s$ (respecitvely $s^2$).
\end{list}
\end{lem}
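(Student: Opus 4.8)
The plan is to mirror the proof of Lemma~\ref{lem:chiraldiag} almost verbatim, replacing the role played there by the set of $t$-fixed points with the set of $s$-fixed (or $s^2$-fixed) points. As before I would argue by contradiction: suppose the map $\mathcal{M}(G:r,s)$ is reflexible, so that there is an involutory automorphism $\phi \in Aut(G)$ with $\phi: s \mapsto s^{-1}$ and $\phi: t \mapsto t$. Since $Aut(G) \leq S_k$, this $\phi$ is realised by conjugation by some relabelling $\pi \in S_k$ satisfying $s^\pi = s^{-1}$ and $t^\pi = t$, so that $(xg)\pi = (x\pi)\phi(g)$ for every point $x$ and every $g \in G$.

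The first bullet is identical to the first condition of Lemma~\ref{lem:chiraldiag}, so the opening step transfers unchanged: because $\zeta$ is the unique point fixed by $s^b t$ but not by $t$, and $\phi$ sends $s^b t$ to $s^{-b} t$ while preserving the failure to be $t$-fixed, the point $\eta := \zeta t = \zeta s^b$ is forced to be the unique point fixed by $s^{-b}t$ and not by $t$. Hence $\pi$ must interchange $\zeta$ and $\eta$.

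The only new computation is to follow the image of $\zeta s^c t$. Using the conjugation relation,
\[
(\zeta s^c t)\pi = (\zeta \pi)\,\phi(s^c t) = \eta\, s^{-c} t = \zeta t s^{-c} t,
\]
so $\pi$ carries $\zeta s^c t$ to $\zeta t s^{-c} t$. The second bullet asserts that the former is fixed by $s$ (respectively $s^2$) while the latter is not, and this will be the desired contradiction once I know that $\pi$ must preserve the set of $s$-fixed (respectively $s^2$-fixed) points.

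That preservation is the one step needing care, and it is where the argument departs conceptually from Lemma~\ref{lem:chiraldiag}: there $\phi$ fixed $t$, so $\pi$ trivially preserved the $t$-fixed points, whereas here $\phi$ inverts $s$ rather than fixing it. The resolution is that a point is fixed by $s$ if and only if it is fixed by $s^{-1}$ (and similarly for $s^2$ and $s^{-2}$), so from $s^\pi = s^{-1}$ it still follows that $\pi$ maps the set of $s$-fixed points onto itself. Thus $\pi$ cannot send the $s$-fixed point $\zeta s^c t$ to the non-$s$-fixed point $\zeta t s^{-c} t$, contradicting reflexibility. I expect no genuine obstacle beyond this observation: the transitivity hypothesis and all remaining machinery carry over directly from the previous lemma.
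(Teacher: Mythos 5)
Your proposal is correct and is exactly the argument the paper intends: the paper omits this proof, stating only that it is analogous to that of Lemma~\ref{lem:chiraldiag}, and your write-up supplies that analogue faithfully. You also correctly isolate the one genuinely new step --- that $\pi$ preserves the $s$-fixed (respectively $s^2$-fixed) points because $\mathrm{Fix}(s)=\mathrm{Fix}(s^{-1})$ even though $\phi$ inverts rather than fixes $s$ --- which is precisely where this lemma differs from the previous one.
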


\begin{proof}
Omitted. Similar argument to that for the proof of Lemma \ref{lem:chiraldiag}.
\end{proof}

\section{When $m$ is even and $n$ is odd}\label{sec:nodd}

We have assumed that $m$ is even and our constructions depend on the relative sizes of $m$ and $n$. The notation is such that if there is a point labelled $b'$ it is the image under $t$ of the point labelled $b$. This notation is not ambiguous and so, for example, the image under $t$ of the point labelled $2a-1$ is denoted $2a-1'$ without brackets and without confusion.

\subsection{Even $m < n$ odd}

Since $n$ is odd, $s$ is necessarily an even permutation and so for small $m$ we build constructions based on $s$ being a single $n$-cycle, and then find $t$ such that it is the product of an even number of disjoint transpositions while at the same time ensuring their product, and so also $r$, has the correct order.

\begin{constr}\label{con:noddmis4} For $m=4$. See Figure \ref{fig:noddmis4}.

Let $n = 4a+4+i \geq 9$ where $i \in \{ -1, 1 \}$ and $a \geq 1$.

Define $t := (\alpha,\alpha ')(\beta, \beta') \prod_{j=1}^{2a}(j,j')$
and $s := s_i$ where 

\noindent $s_{-1} := (1,2, \dots , 2a-1,2a,\alpha,\alpha',\beta,{2a}',{2a-1}', \dots ,2',1')$ and

\noindent $s_{1} := (1,2, \dots , 2a-1,2a,\alpha,\beta,\gamma,\alpha',{2a}',{2a-1}', \dots ,2',1',\delta)$.

\end{constr} 

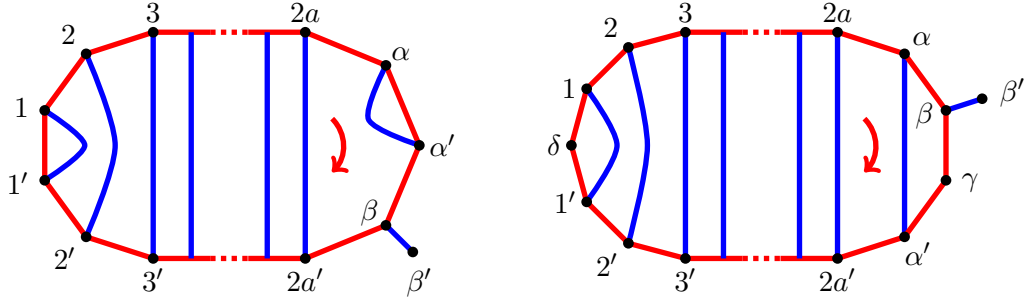
\begin{figure}

\begin{centering}
\begin{tikzpicture}[scale=0.5]

\foreach \numpt in {0,1,2,3,4}{\draw [red, line width=2pt]  (90+36*\numpt:3)--(90+36+36*\numpt:3);}

\draw [red, line width=2pt]  (0,3)--(1.5,3);
\draw [red, line width=2pt]  (0,-3)--(1.5,-3);
\draw [red, dotted, line width=2pt]  (1.5,3)--(2.5,3);
\draw [red, dotted, line width=2pt]  (1.5,-3)--(2.5,-3);
\draw [red, line width=2pt]  (2.5,3)--(4,3);
\draw [red, line width=2pt]  (2.5,-3)--(4,-3);

\draw[blue, line width=2pt] plot [smooth] coordinates {(162:3)  (180:1.8)  (-162:3)};

\draw[blue, line width=2pt] plot [smooth] coordinates {(126:3)  (180:1)  (-126:3)};

\draw [blue, line width=2pt]  (0,3)--(0,-3);
\draw [blue, line width=2pt]  (1,3)--(1,-3);

\foreach \numpt in {0,1,2,3,4,5}{\fill  (90+36*\numpt:3) circle(4pt);}

\draw [blue, line width=2pt]  (4,3)--(4,-3);
\draw [blue, line width=2pt]  (3,3)--(3,-3);

\node[above left] at (126:3) {$2$};
\node[below left] at (-126:3) {$2'$};

\node[left] at (-3,1) {$1$};
\node[left] at (-3,-1) {$1'$};

\node[above] at (0,3) {$3$};
\node[below] at (0,-3) {$3'$};

\draw [blue, line width=2pt, shift={(4,0)}]  (-45:3)--(-45:4);

\foreach \numpt in {-2,-1,0,1}{\draw [red, line width=2pt, shift={(4,0)}]  (45*\numpt:3)--(45+45*\numpt:3);}

\draw[blue, line width=2pt, shift={(4,0)}] plot [smooth] coordinates {(45:3)  (22.5:1.8)  (0:3)};

\centrearc[red, line width=2pt, ->](4,0)(45:-45:1)

\foreach \numpt in {-2,-1,0,1,2}{\fill[shift={(4,0)}]  (45*\numpt:3) circle(4pt);}
\fill[shift={(4,0)}]  (-45:4) circle(4pt);

\node[above] at (4,3) {$2a$};
\node[below] at (4,-3) {$2a'$};

\node[above right] at (6,2) {$\alpha$};
\node[right] at (7,0) {$\alpha '$};
\node[above left] at (6.2,-2.4) {$\beta $};
\node[below] at (7,-3) {$\beta '$};


\foreach \numpt in {-3,-2,-1,0,1,2}{\draw [red, line width=2pt, shift={(14,0)}]  (180+30*\numpt:3)--(180+30+30*\numpt:3);}

\draw [red, line width=2pt, shift={(14,0)}]  (0,3)--(1.5,3);
\draw [red, line width=2pt, shift={(14,0)}]  (0,-3)--(1.5,-3);
\draw [red, dotted, line width=2pt, shift={(14,0)}]  (1.5,3)--(2.5,3);
\draw [red, dotted, line width=2pt, shift={(14,0)}]  (1.5,-3)--(2.5,-3);
\draw [red, line width=2pt, shift={(14,0)}]  (2.5,3)--(4,3);
\draw [red, line width=2pt, shift={(14,0)}]  (2.5,-3)--(4,-3);

\draw[blue, line width=2pt, shift={(14,0)}] plot [smooth] coordinates {(150:3)  (180:1.8)  (-150:3)};

\draw[blue, line width=2pt, shift={(14,0)}] plot [smooth] coordinates {(120:3)  (180:1)  (-120:3)};

\draw [blue, line width=2pt, shift={(14,0)}]  (0,3)--(0,-3);
\draw [blue, line width=2pt, shift={(14,0)}]  (1,3)--(1,-3);

\foreach \numpt in {-3,-2,-1,0,1,2,3}{\fill[shift={(14,0)}] (180+30*\numpt:3) circle(4pt);}

\draw [blue, line width=2pt]  (18,3)--(18,-3);
\draw [blue, line width=2pt]  (17,3)--(17,-3);
\draw [blue, line width=2pt, shift={(18,0)}]  (54:3)--(-54:3);

\draw [blue, line width=2pt, shift={(18,0)}]  (18:3)--(18:4);
\fill[shift={(18,0)}] (18:4) circle(4pt);

\foreach \numpt in {0,1,2,3,4}{\draw [red, line width=2pt, shift={(18,0)}]  (90-36*\numpt:3)--(90-36-36*\numpt:3);}

\centrearc[red, line width=2pt, ->](18,0)(45:-45:1)

\foreach \numpt in {-3,-2,-1,0,1,2}{\fill[shift={(18,0)}]  (18+36*\numpt:3) circle(4pt);}

\node[above left, shift={(7,0)}] at (120:3) {$2$};
\node[below left, shift={(7,0)}] at (-120:3) {$2'$};

\node[left, shift={(7,0)}] at (150:3) {$1$};
\node[left, shift={(7,0)}] at (-150:3) {$1'$};

\node[left, shift={(7,0)}] at (-180:3) {$\delta$};

\node[above] at (14,3) {$3$};
\node[below] at (14,-3) {$3'$};

\node[above right] at (19.7,2.4) {$\alpha$};
\node[below right] at (19.5,-2.3) {$\alpha'$};

\node[right] at (22,1.4) {$\beta '$};
\node[right] at (21,-1) {$\gamma$};
\node[below left] at (20.8,1.4) {$\beta$};

\node[above] at (18,3) {$2a$};
\node[below] at (18,-3) {$2a'$};

\end{tikzpicture}
\caption{Construction \ref{con:noddmis4} when $i=-1$ and $i=1$ respectively}\label{fig:noddmis4}
\end{centering}
\end{figure}

\begin{prop}\label{prop:noddmis4}
Let $n$, $s$ and $t$ be defined according to Construction \ref{con:noddmis4}. Then $\mathcal{M}(G:r,s)$ is a chiral map of type $\{ 4,n \}$ and $G = A_{n+1}$.
\end{prop}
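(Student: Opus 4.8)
The plan is to establish the three assertions---the type, the group, and chirality---separately, in that order.

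For the type I would first confirm by direct inspection that $s=s_i$ is a single $n$-cycle: counting the entries of $s_{-1}$ gives $2a+3+2a=4a+3=n$ and those of $s_1$ give $2a+4+2a+1=4a+5=n$, so in each case $s$ is an $n$-cycle fixing the single point $\beta'$ (which does not occur in the cycle), and hence has order $n$. Next I would compute the cycle structure of $r=ts^{-1}$ (equivalently, check $t=rs$). A routine point-by-point evaluation shows that $r$ is a product of transpositions and $4$-cycles, with at least one $4$-cycle present, so $r$ has order exactly $4$; for $i=-1$ one finds explicitly $r=(1,2')(2,3')\cdots(2a-1,2a')(2a,\beta,\beta',\alpha')$ with the two fixed points $\alpha$ and $1'$. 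Since $t=rs$ is by construction an involution, the map $\mathcal{M}(G:r,s)$ has type $\{4,n\}$.

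For $G=A_{n+1}$ I would argue as follows. The diagram is connected, so $G$ is transitive on the $n+1$ points. Because $s$ is an $n$-cycle fixing the single point $\beta'$, the stabiliser $G_{\beta'}$ contains $s$ and is transitive on the remaining $n$ points; hence $G$ is $2$-transitive and in particular primitive. I would then exhibit an element of $G$ which is a cycle of prime length fixing at least three points. A convenient choice starts from $r^2=(2a,\beta')(\beta,\alpha')$: conjugating by a suitable power $s^k$ yields a second double transposition meeting the support of $r^2$ in exactly the one point $\beta'$, so that $r^2\cdot s^kr^2s^{-k}$ is the product of a $3$-cycle and a disjoint double transposition, whose square is a single $3$-cycle. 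Since $n+1\geq 10$, this $3$-cycle fixes far more than three points, and Jordan's theorem (Theorem \ref{thm:JordanAltSym}) gives $G\geq A_{n+1}$. Finally, $s$ is an odd-length cycle and $t$ is a product of $2a+2$ transpositions, so both generators are even and $G\leq A_{n+1}$; therefore $G=A_{n+1}$.

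For chirality I would apply the permutation-diagram criteria of Section \ref{sec:background}, which are available precisely because $Aut(A_{n+1})=S_{n+1}$. The asymmetry of the diagram is caused by the special points $\alpha,\beta$ (and, when $i=1$, $\gamma,\delta$), which spoil the otherwise symmetric placement of each pair $j,j'$ within the $s$-cycle. Concretely I would locate a nonzero $b$ for which $s^bt$ has a unique fixed point $\zeta$ that is not fixed by $t$; a short calculation on the positions of the labelled points in the $s$-cycle shows that a generic value of $b$ (for instance $b=4$, giving $\zeta=2a$) yields exactly one such $\zeta$, since the only repeated values of $b$ involve the $\alpha$-pair. When $i=1$ the involution $t$ fixes $\gamma$ and $\delta$, so Lemma \ref{lem:chiraldiag} applies directly; when $i=-1$ the involution $t$ is fixed-point-free, so I would instead use Lemma \ref{lem:chiraldiagsfixedpoint}, where the role of the $t$-fixed point is taken by the unique $s$-fixed point $\beta'$.

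I expect the main obstacle to be the second step: producing the prime-length cycle with at least three fixed points \emph{uniformly} in the parameter $a$ (and, for chirality, pinning down a value of $b$ that yields a genuinely unique $\zeta$). Both reduce to careful bookkeeping of the positions of the labelled points within the single long $s$-cycle, but the chosen words must be shown to behave identically for every admissible $a$, with only the smallest values of $n$ possibly needing separate inspection.
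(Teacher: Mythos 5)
Your proposal is correct and structurally it mirrors the paper's proof: explicit computation of $st$ (equivalently $r$) to get the type, primitivity from the stabiliser of the unique $s$-fixed point $\beta'$ containing the $n$-cycle $s$, Jordan's original theorem (no CFSG) for the alternating claim, and exactly the paper's case split for chirality --- Lemma \ref{lem:chiraldiagsfixedpoint} with $b=4$ when $i=-1$ (your structural reason, that $t$ is fixed-point-free in that case so the second condition of Lemma \ref{lem:chiraldiag} is vacuously unsatisfiable, is correct and is left implicit in the paper) and Lemma \ref{lem:chiraldiag} when $i=1$, where the paper takes $b=3$, $c=2$ with $\zeta=\alpha$ rather than your generic $b=4$, which indeed only works for $i=-1$. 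The one genuine divergence is the element fed to Jordan's theorem: the paper exhibits prime cycles directly as short words, namely $(s^3t)^2$, a $7$-cycle $(\alpha,\beta',2a-2,2a-1,2a,\beta,\alpha')$ when $i=-1$, and $(s^2t)^2$, the $5$-cycle $(\alpha,2a,\beta,\gamma,\beta')$ when $i=1$, both uniform in $a$ with no bookkeeping at all; you instead build a $3$-cycle as the square of $r^2\cdot(r^2)^{s^k}$. Your route does work --- since $s$ fixes $\beta'$, every conjugate $(r^2)^{s^k}$ keeps $\beta'$ in its support, so one only needs $k$ to push the other three support points clear (e.g.\ $k=4$ for $i=-1$, $k=3$ for $i=1$ succeed for all admissible $a$) --- but it costs precisely the uniform-in-$a$ verification you identify as the main obstacle, which the paper's fixed words sidestep; on the other hand your 2-transitivity phrasing of the primitivity step is a slightly cleaner statement of the same argument.
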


\begin{proof}
It is clear from their definitions that both $s$ and $t$ are even permutations and have the expected orders $n$ and $2$ respectively.
Now

\noindent $ s_{-1}t = (1,2')(2,3') \dots (2a-1,{2a}')(2a, \alpha', \beta', \beta)(1')(\alpha)$ while

\noindent $s_1t = (1,2')(2,3') \dots (2a-1,{2a}')(2a,\alpha')(\alpha,\beta',\beta,\gamma)(1',\delta)$ 
and both have order $4$.
The order of $r$ is the same as the order of $r^{-1} = st$, so $\mathcal{M}(G:r,s)$ is a map of type $\{ 4,n \}$.

In both cases $G = \langle s,t \rangle$ is certainly primitive since $s$ has just one fixed point, namely $\beta'$, and $s$ acts transitively on the set of all remaining points. When $i=-1$ the permutation $(s^3t)^2$ is a single $7$-cycle: in particular when $i=-1$, $a \geq 2$ and the permutation is $(\alpha,\beta',2a-2,2a-1,2a,\beta,\alpha')$ which fixes $4a-3$ points. Now consider the permutation $(s^2t)^2$ when $i=1$ which in every case is $(\alpha,2a,\beta,\gamma,\beta')$, a $5$-cycle fixing $4a+1$ points. Applying Jordan's Theorem \ref{thm:JordanAltSym} using the above permutations, cycles where there are enough fixed points, we see that $G$ contains $A_{n+1}$. Since in each case the group is generated by even permutations, $G$ must be the alternating group itself.

Then inspection of the corresponding diagrams in Figure \ref{fig:noddmis4} will yield that the map is chiral by, respectively, Lemma \ref{lem:chiraldiagsfixedpoint} for example using $b=4$ and $c=3$, and Lemma \ref{lem:chiraldiag} with $b=3$ and $c=2$.

\end{proof}

\begin{constr}\label{con:noddmis6} For $m=6$. See Figure \ref{fig:noddmis6}.

Let $n = 4a+6+i$ where $i \in \{ -1, 1 \}$ and $a \geq 1$.

Define $t := (\alpha,\alpha ')(\beta, \beta') t_i \prod_{j=1}^{2a}(j,j')$  where $t_{-1}$ is trivial and 

\noindent $t_{1} :=(\gamma,\gamma')(\delta,\delta')$,
and define $s := s_i$ where 

\noindent $s_{-1} := (1,2, \dots , 2a-1,2a,\alpha,\beta,\gamma,{2a}',{2a-1}', \dots ,2',1',\delta,\epsilon)$ and 

\noindent $s_{1} := (1,2, \dots , 2a-1,2a,\alpha,\beta,\beta',\gamma,\delta,\gamma',\alpha',{2a}',{2a-1}', \dots ,2',1')$.

\end{constr}
  
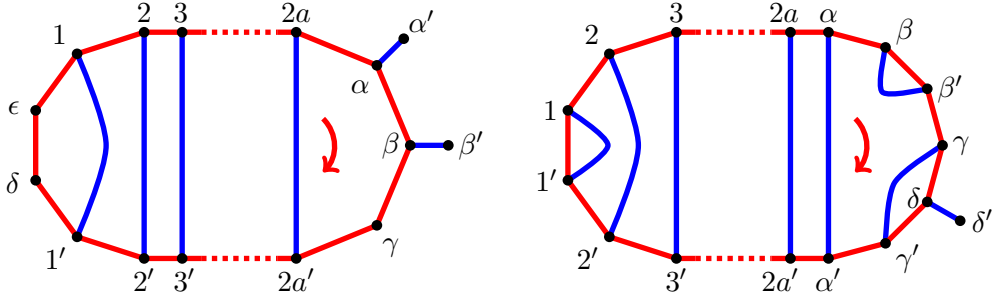
\begin{figure}

\begin{centering}
\begin{tikzpicture}[scale=0.5]

\foreach \numpt in {0,1,2,3,4}{\draw [red, line width=2pt]  (90+36*\numpt:3)--(90+36+36*\numpt:3);}

\draw [red, line width=2pt]  (0,3)--(1.5,3);
\draw [red, line width=2pt]  (0,-3)--(1.5,-3);
\draw [red, dotted, line width=2pt]  (1.5,3)--(3.5,3);
\draw [red, dotted, line width=2pt]  (1.5,-3)--(3.5,-3);
\draw [red, line width=2pt]  (3.5,3)--(4,3);
\draw [red, line width=2pt]  (3.5,-3)--(4,-3);

\draw[blue, line width=2pt] plot [smooth] coordinates {(126:3)  (180:1)  (-126:3)};

\draw [blue, line width=2pt]  (0,3)--(0,-3);
\draw [blue, line width=2pt]  (1,3)--(1,-3);
\fill (1,3) circle(4pt);
\fill (1,-3) circle(4pt);

\foreach \numpt in {0,1,2,3,4,5}{\fill  (90+36*\numpt:3) circle(4pt);}

\draw [blue, line width=2pt]  (4,3)--(4,-3);

\foreach \numpt in {-2,-1,0,1}{\draw [red, line width=2pt, shift={(4,0)}]  (45*\numpt:3)--(45+45*\numpt:3);}

\draw [blue, line width=2pt, shift={(4,0)}]  (45:3)--(45:4);
\fill[shift={(4,0)}]  (45:4) circle(4pt);

\draw [blue, line width=2pt]  (7,0)--(8,0);
\fill (8,0) circle(4pt);

\centrearc[red, line width=2pt, ->](4,0)(45:-45:1)

\foreach \numpt in {-2,-1,0,1,2}{\fill[shift={(4,0)}]  (45*\numpt:3) circle(4pt);}

\node[above left] at (126:3) {$1$};
\node[below left] at (-126:3) {$1'$};

\node[left] at (-3,1) {$\epsilon$};
\node[left] at (-3,-1) {$\delta$};

\node[above] at (0,3) {$2$};
\node[below] at (0,-3) {$2'$};

\node[above right] at (0.5,3) {$3$};
\node[below right] at (0.5,-3) {$3'$};

\node[above] at (4,3) {$2a$};
\node[below] at (4,-3) {$2a'$};

\node[below left] at (6.2,2) {$\alpha$};
\node[] at (7.3,3.3) {$\alpha '$};
\node[left] at (7,0) {$\beta $};
\node[right] at (8,0) {$\beta '$};
\node[below right] at (6,-2.2) {$\gamma$};


\foreach \numpt in {0,1,2,3,4}{\draw [red, line width=2pt, shift={(14,0)}]  (90+36*\numpt:3)--(90+36+36*\numpt:3);}

\draw [red, line width=2pt, shift={(14,0)}]  (0,3)--(0.5,3);
\draw [red, line width=2pt, shift={(14,0)}]  (0,-3)--(0.5,-3);
\draw [red, dotted, line width=2pt, shift={(14,0)}]  (0.5,3)--(2.5,3);
\draw [red, dotted, line width=2pt, shift={(14,0)}]  (0.5,-3)--(2.5,-3);
\draw [red, line width=2pt, shift={(14,0)}]  (2.5,3)--(4,3);
\draw [red, line width=2pt, shift={(14,0)}]  (2.5,-3)--(4,-3);

\draw[blue, line width=2pt, shift={(14,0)}] plot [smooth] coordinates {(162:3)  (180:1.8)  (-162:3)};

\draw[blue, line width=2pt, shift={(14,0)}] plot [smooth] coordinates {(126:3)  (180:1)  (-126:3)};

\draw [blue, line width=2pt, shift={(14,0)}]  (0,3)--(0,-3);

\foreach \numpt in {0,1,2,3,4,5}{\fill[shift={(14,0)}] (90+36*\numpt:3) circle(4pt);}

\draw [blue, line width=2pt]  (17,3)--(17,-3);

\draw[blue, line width=2pt, shift={(18,0)}] plot [smooth] coordinates {(60:3)  (45:2)  (30:3)};

\draw [blue, line width=2pt]  (18,3)--(18,-3);
\draw [blue, line width=2pt, shift={(18,0)}] plot [smooth] coordinates {(0:3) (-30:2) (-60:3)};

\foreach \numpt in {-3,-2,-1,0,1,2}{\draw [red, line width=2pt, shift={(18,0)}]  (30*\numpt:3)--(30+30*\numpt:3);}

\draw [blue, line width=2pt, shift={(18,0)}]  (-30:3)--(-30:4);

\centrearc[red, line width=2pt, ->](18,0)(45:-45:1)

\foreach \numpt in {-3,-2,-1,0,1,2,3}{\fill[shift={(18,0)}]  (30*\numpt:3) circle(4pt);}
\fill (17,3) circle(4pt);
\fill (17,-3) circle(4pt);
\fill [shift={(18,0)}] (-30:4) circle(4pt);

\node[above left, shift={(7,0)}] at (126:3) {$2$};
\node[below left, shift={(7,0)}] at (-126:3) {$2'$};

\node[left, shift={(7,0)}] at (162:3) {$1$};
\node[left, shift={(7,0)}] at (-162:3) {$1'$};

\node[above] at (14,3) {$3$};
\node[below] at (14,-3) {$3'$};

\node[above] at (18,3) {$\alpha$};
\node[below] at (18,-3) {$\alpha'$};

\node[above right] at (19.5,2.4) {$\beta $};
\node[right] at (20.6,1.5) {$\beta '$};
\node[right] at (21,0) {$\gamma$};

\node[left] at (20.7,-1.4) {$\delta $};
\node[right] at (21.5,-2) {$\delta '$};
\node[below, right] at (19.5,-3) {$\gamma '$};

\node[above left] at (17.5,3) {$2a$};
\node[below left] at (17.5,-3) {$2a'$};

\end{tikzpicture}
\caption{Construction \ref{con:noddmis6} when $i=-1$ and $i=1$ respectively}\label{fig:noddmis6}
\end{centering}
\end{figure}

\begin{prop}\label{prop:noddmis6}
Let $n$, $s$ and $t$ be defined as in Construction \ref{con:noddmis6}. Then $\mathcal{M}(G:r,s)$ is a chiral map of type $\{ 6,n \}$ and $G = A_{k}$.
\end{prop}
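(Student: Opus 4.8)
The plan is to follow the same template as the proof of Proposition \ref{prop:noddmis4}, treating the two cases $i=-1$ and $i=1$ in parallel. First I would verify the order and parity of the generators: since $n = 4a+6+i$ and $s_i$ is a single $n$-cycle in each case, $s$ has order $n$, and because $n$ is odd an $n$-cycle is an even permutation. For $t$, I would count the transpositions: when $i=-1$, $t$ is a product of $2a+2$ transpositions (from the $\alpha,\beta$ pair and the $2a$ pairs $(j,j')$), and when $i=1$ there are two extra transpositions $(\gamma,\gamma')(\delta,\delta')$, giving $2a+4$; in both cases this is even, so $t$ is an even permutation of order $2$. The group $G=\langle s,t\rangle$ therefore consists of even permutations.

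Next I would establish that the type is $\{6,n\}$ by computing $r^{-1}=st$ (equivalently $s_it$) explicitly and checking that its order is $6$. As in the previous proposition, the clean way is to write $s_i t$ as a product of disjoint cycles: the pairs $(j,j')$ interact with the run $1,2,\dots,2a$ of $s_i$ to produce a collection of transpositions $(j,\,(j+1)')$, and the remaining Greek-lettered points together with the wrap-around of the cycle should coalesce into a single $6$-cycle (with possibly some fixed points). Verifying that this residual cycle has length exactly $6$ is the routine-but-essential calculation; I would carry it out separately for $s_{-1}t$ and $s_1 t$ and confirm the order is $6$ in each.

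For the structure of $G$, the argument is primitivity plus Jordan's or Jones' theorem. Primitivity follows exactly as before: $s$ is an $n$-cycle with a single fixed point (I would identify which Greek point is fixed in each case), so $\langle s \rangle$ is already transitive on the non-fixed points, which forces $G$ to be primitive. Then I would exhibit a suitable power-and-product of the generators, such as $(s^{c}t)^{d}$ for small $c,d$, that collapses to a short cycle fixing at least three points — ideally a cycle of prime length fixing many points, so that Theorem \ref{thm:JordanAltSym} applies and we avoid the classification of finite simple groups, matching the claim made earlier that Proposition \ref{prop:noddmis6} is classification-free. Since $G$ is generated by even permutations and contains $A_k$, we conclude $G = A_k$, where $k = n+2$ when $i=-1$ and $k = n+4$ when $i=1$ (the total number of points in each diagram).

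The main obstacle I anticipate is pinning down this short cycle with enough fixed points uniformly in $a$: the candidate permutation must have constant cycle structure (independent of $a$) while the number of fixed points grows linearly in $a$, exactly as the $5$-cycle $(\alpha,2a,\beta,\gamma,\beta')$ did in Proposition \ref{prop:noddmis4}. Finding the right exponent $c$ so that the moving points are confined to the Greek-lettered region and the $1,\dots,2a$ region contributes only fixed points requires care, and the two cases $i=-1,1$ will need different witnesses. Finally, chirality follows by inspecting the diagrams in Figure \ref{fig:noddmis6} and applying Lemma \ref{lem:chiraldiag} or Lemma \ref{lem:chiraldiagsfixedpoint}: I would locate the unique point $\zeta$ fixed by some $s^b t$ but not by $t$, and produce the integer $c$ witnessing the asymmetry required by the relevant lemma, just as in the $m=4$ case.
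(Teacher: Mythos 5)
Your template is the right one and matches the paper's proof in outline (parity and orders of the generators, explicit cycle decomposition of $st$, primitivity, a Jordan witness of the form $(s^ct)^d$, chirality via Lemmas \ref{lem:chiraldiag} and \ref{lem:chiraldiagsfixedpoint}), but your primitivity step contains a genuine error: it fails for $i=-1$. In that case $s_{-1}$ is an $n$-cycle on only $n = 4a+5$ of the $k = 4a+7$ points, so it fixes \emph{two} points, namely $\alpha'$ and $\beta'$, not one; your assertion that ``$s$ is an $n$-cycle with a single fixed point'' holds only for $i=1$ (where $\delta'$ is the unique fixed point, and the argument then does give $2$-transitivity, hence primitivity). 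With two fixed points the stabiliser argument needs more: a priori the two fixed points could lie in a common block. The paper repairs exactly this by bringing in the conjugate $s^{tst}$, which fixes $\alpha'$ and $2a'$; since $s$ and $s^{tst}$ share only the fixed point $\alpha'$, the stabiliser of $\alpha'$ is transitive on all remaining points and primitivity follows. Some such extra step is indispensable in the $i=-1$ case, and your proposal as written does not supply it.

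Two further slips, one factual and one self-correcting. First, the degree for $i=1$ is $k = n+1 = 4a+8$ (the $4a$ numbered points plus the eight Greek points, of which only $\delta'$ is outside the $s$-cycle), not $k = n+4$; your value for $i=-1$, namely $k=n+2$, is correct. Second, your expectation that $st$ consists of transpositions plus ``a single $6$-cycle with possibly some fixed points'' is not what the computation yields: for $i=-1$ one gets a $6$-cycle $(2a,\alpha',\alpha,\beta',\beta,\gamma)$ \emph{together with} a $3$-cycle $(1',\delta,\epsilon)$, and for $i=1$ a transposition $(2a,\alpha')$ with two $3$-cycles, the order $6$ arising as a least common multiple in each case. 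Since you commit to carrying out the decomposition explicitly this would self-correct, and your anticipation about the Jordan witness is sound: the paper uses $(s^2t)^2$ in both cases (its square is a $3$-cycle when $i=-1$; it is a $7$-cycle fixing $4a+1$ points when $i=1$), so the classification-free route via Theorem \ref{thm:JordanAltSym} does go through as you predicted.
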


\begin{proof}
The definitions ensure both generators $s$ and $t$ are even permutations and have the required orders.
Now $st=r^{-1}$ and, when $i=-1$ 

\noindent $ st = (1,2')(2,3') \dots (2a-1,{2a}')(2a,\alpha',\alpha,\beta',\beta,\gamma)(1',\delta,\epsilon)$, and when $i=1$ 

\noindent $st = (1,2')(2,3') \dots (2a-1,{2a}')(2a,\alpha')(\alpha,\beta',\gamma')(\gamma,\delta',\delta)(1')(\beta)$, 
so in each case $r$ has order six.

In the case where $i=1$ the group $G = \langle s,t \rangle$ is certainly primitive since $s$ has just one fixed point, namely $\delta'$, and is transitive on all other points.
In the case where $i=-1$ the element $s$ is a single cycle fixing precisely $\alpha'$ and $\beta'$ while its conjugate $s^{tst}$ fixes $\alpha'$ and $2a'$. The stabiliser of $\alpha '$ is transitive on all other points and hence $G$ is primitive.

It remains to be proven is that $G$ is an alternating group in each case, and for this we seek a permutation which is a single cycle of prime length.
When $i=-1$ the permutation $(s^2t)^2 = (1', 2')(2a-1,\alpha)(2a,\beta,\beta')(\alpha',\gamma)(\delta,\epsilon)$, which when squared yields a single $3$-cycle.
In the other case, when $i=1$, the permutation $(s^2t)^2 = (2a,\delta',\alpha,\gamma',\beta',\delta,\beta)$, a $7$-cycle fixing $4a+1$ points.
This allows us to apply Jordan's Theorem \ref{thm:JordanAltSym} and in each case we see that $G$ contains the alternating group of the corresponding degree (respectively $k = n+2$ or $k = n+1$), and since it is generated by even permutations, $G$ must be the alternating group itself.

Inspection of the corresponding diagram in Figure \ref{fig:noddmis6}, respectively, using Lemma \ref{lem:chiraldiag} with $b=4$ and $c=3$ and using Lemma \ref{lem:chiraldiagsfixedpoint} with $b=6$ and $c=4$, yields that $\mathcal{M}(G:r,s)$ is a chiral map of type $\{ 6,n \}$.
\end{proof}

\begin{constr}\label{con:noddmis8} For $m=8$. See Figure \ref{con:noddmis8}.

Let $n = 4a+6+i$ where $i \in \{ 1, -1 \}$ and $a \geq 1$.

Define $t := (\alpha,\alpha ')(\beta, \beta')\prod_{j=1}^{2a}(j,j')$
and $s := s_i$ where 

\noindent $s_{-1} := (1,2, \dots , 2a-1,2a,\alpha,\beta,\gamma,\delta,\epsilon,{2a}',{2a-1}', \dots ,2',1')$ and

\noindent $s_{1} := (1,2, \dots , 2a-1,2a,\alpha,\alpha',\beta,\gamma,\delta,\epsilon,\zeta,{2a}',{2a-1}', \dots ,2',1')$.

\end{constr}
 
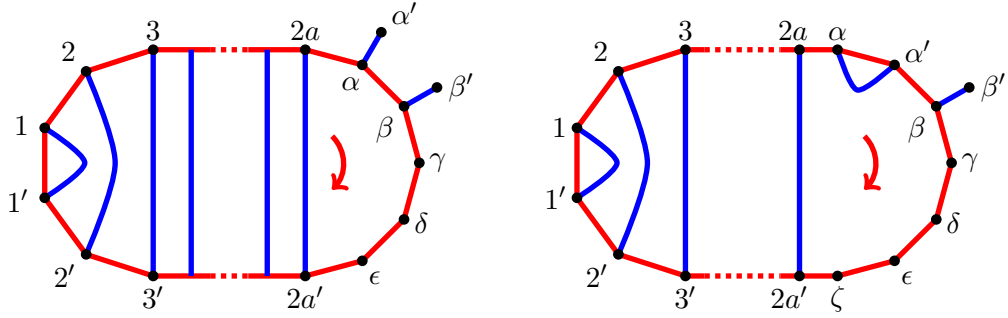
\begin{figure}

\begin{centering}
\begin{tikzpicture}[scale=0.5]

\foreach \numpt in {0,1,2,3,4}{\draw [red, line width=2pt]  (90+36*\numpt:3)--(90+36+36*\numpt:3);}

\draw [red, line width=2pt]  (0,3)--(1.5,3);
\draw [red, line width=2pt]  (0,-3)--(1.5,-3);
\draw [red, dotted, line width=2pt]  (1.5,3)--(2.5,3);
\draw [red, dotted, line width=2pt]  (1.5,-3)--(2.5,-3);
\draw [red, line width=2pt]  (2.5,3)--(4,3);
\draw [red, line width=2pt]  (2.5,-3)--(4,-3);

\draw[blue, line width=2pt] plot [smooth] coordinates {(162:3)  (180:1.8)  (-162:3)};

\draw[blue, line width=2pt] plot [smooth] coordinates {(126:3)  (180:1)  (-126:3)};

\draw [blue, line width=2pt]  (0,3)--(0,-3);
\draw [blue, line width=2pt]  (1,3)--(1,-3);

\foreach \numpt in {0,1,2,3,4,5}{\fill  (90+36*\numpt:3) circle(4pt);}

\draw [blue, line width=2pt]  (4,3)--(4,-3);
\draw [blue, line width=2pt]  (3,3)--(3,-3);

\node[above left] at (126:3) {$2$};
\node[below left] at (-126:3) {$2'$};

\node[left] at (-3,1) {$1$};
\node[left] at (-3,-1) {$1'$};

\node[above] at (0,3) {$3$};
\node[below] at (0,-3) {$3'$};

\foreach \numpt in {-3,-2,-1,0,1,2}{\draw [red, line width=2pt, shift={(4,0)}]  (30*\numpt:3)--(30+30*\numpt:3);}

\draw [blue, line width=2pt, shift={(4,0)}]  (60:3)--(60:4);
\fill[shift={(4,0)}] (60:4) circle(4pt);

\draw [blue, line width=2pt, shift={(4,0)}]  (30:3)--(30:4);
\fill[shift={(4,0)}] (30:4) circle(4pt);

\centrearc[red, line width=2pt, ->](4,0)(45:-45:1)

\foreach \numpt in {-3,-2,-1,0,1,2,3}{\fill[shift={(4,0)}]  (30*\numpt:3) circle(4pt);}
\node[] at (5.2,2.1) {$\alpha $};
\node[above right] at (6,3.4) {$\alpha '$};
\node[below left] at (6.6,1.5) {$\beta$};
\node[right] at (7.5,2) {$\beta '$};
\node[right] at (7,0) {$\gamma$};

\node[right] at (6.6,-1.6) {$\delta$};
\node[below right] at (5.4,-2.6) {$\epsilon$};

\node[above] at (4,3) {$2a$};
\node[below] at (4,-3) {$2a'$};


\foreach \numpt in {0,1,2,3,4}{\draw [red, line width=2pt, shift={(14,0)}]  (90+36*\numpt:3)--(90+36+36*\numpt:3);}

\draw [red, line width=2pt, shift={(14,0)}]  (0,3)--(0.5,3);
\draw [red, line width=2pt, shift={(14,0)}]  (0,-3)--(0.5,-3);
\draw [red, dotted, line width=2pt, shift={(14,0)}]  (0.5,3)--(2.5,3);
\draw [red, dotted, line width=2pt, shift={(14,0)}]  (0.5,-3)--(2.5,-3);
\draw [red, line width=2pt, shift={(14,0)}]  (2.5,3)--(4,3);
\draw [red, line width=2pt, shift={(14,0)}]  (2.5,-3)--(4,-3);

\draw[blue, line width=2pt, shift={(14,0)}] plot [smooth] coordinates {(162:3)  (180:1.8)  (-162:3)};

\draw[blue, line width=2pt, shift={(14,0)}] plot [smooth] coordinates {(126:3)  (180:1)  (-126:3)};

\draw [blue, line width=2pt, shift={(14,0)}]  (0,3)--(0,-3);

\foreach \numpt in {0,1,2,3,4,5}{\fill[shift={(14,0)}] (90+36*\numpt:3) circle(4pt);}

\draw [blue, line width=2pt]  (17,3)--(17,-3);

\draw[blue, line width=2pt, shift={(18,0)}] plot [smooth] coordinates {(60:3)  (75:2)  (90:3)};

\draw [blue, line width=2pt, shift={(18,0)}]  (30:3)--(30:4);
\fill[shift={(18,0)}] (30:4) circle(4pt);

\foreach \numpt in {-3,-2,-1,0,1,2}{\draw [red, line width=2pt, shift={(18,0)}]  (30*\numpt:3)--(30+30*\numpt:3);}

\centrearc[red, line width=2pt, ->](18,0)(45:-45:1)

\foreach \numpt in {-3,-2,-1,0,1,2,3}{\fill[shift={(18,0)}]  (30*\numpt:3) circle(4pt);}
\fill (17,3) circle(4pt);
\fill (17,-3) circle(4pt);

\node[above left, shift={(7,0)}] at (126:3) {$2$};
\node[below left, shift={(7,0)}] at (-126:3) {$2'$};

\node[left, shift={(7,0)}] at (162:3) {$1$};
\node[left, shift={(7,0)}] at (-162:3) {$1'$};

\node[above] at (14,3) {$3$};
\node[below] at (14,-3) {$3'$};

\node[above] at (18,3) {$\alpha$};
\node[below] at (18,-3) {$\zeta$};

\node[above right] at (19.5,2.4) {$\alpha '$};
\node[below left] at (20.6,1.5) {$\beta$};
\node[right] at (21.5,2) {$\beta '$};
\node[right] at (21,0) {$\gamma$};

\node[right] at (20.6,-1.6) {$\delta$};
\node[below right] at (19.4,-2.6) {$\epsilon$};

\node[above left] at (17.5,3) {$2a$};
\node[below left] at (17.5,-3) {$2a'$};

\end{tikzpicture}
\caption{Construction \ref{con:noddmis8} when $i=-1$ and $i=1$ respectively}\label{fig:noddmis8}
\end{centering}
\end{figure}

\begin{prop}\label{prop:noddmis8}
Let $n$, $s$ and $t$ be defined by Construction \ref{con:noddmis8}. Then the map $\mathcal{M}(G:r,s)$ is chiral of type $\{ 8,n \}$ and $G = A_k$.
\end{prop}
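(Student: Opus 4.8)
The plan is to follow the template already established for Propositions~\ref{prop:noddmis4} and~\ref{prop:noddmis6}: verify the order and parity data for $s$ and $t$, compute $st=r^{-1}$ and read off the type, establish primitivity, deduce $G=A_k$ from Jordan's Theorem~\ref{thm:JordanAltSym}, and finally extract chirality from the diagram lemmas. The routine facts come first. Both generators are even, since $s$ is a single $n$-cycle with $n$ odd and $t$ is a product of $2a+2$ transpositions. A direct trace of $st$ shows that in each case it consists of the transpositions $(j,(j+1)')$ together with a single $8$-cycle: for $i=-1$ this is $(2a,\alpha',\alpha,\beta',\beta,\gamma,\delta,\epsilon)$ with $1'$ fixed, and for $i=1$ it is $(2a,\alpha',\beta',\beta,\gamma,\delta,\epsilon,\zeta)$ with $\alpha$ and $1'$ fixed. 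Hence $r$ has order $8$ and the type is $\{8,n\}$. Counting points gives the degree: the fixed points of $s$ are $\alpha',\beta'$ when $i=-1$, so $k=n+2=4a+7$, and only $\beta'$ when $i=1$, so $k=n+1=4a+8$.

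For primitivity I would split on $i$. When $i=1$ the element $s$ is a $(k-1)$-cycle fixing the single point $\beta'$, so a block $B$ containing $\beta'$ satisfies $Bs=B$, and the usual single-fixed-point argument forces the blocks to be singletons. When $i=-1$ the element $s$ fixes the two points $\alpha',\beta'$; here a block $B$ containing $\alpha'$ is again $s$-invariant, hence a union of $s$-orbits, the only orbits being $\{\alpha'\}$, $\{\beta'\}$ and the full $n$-cycle. The possible block sizes are thus $1,\,2,\,k-1,\,k$, and since $k=4a+7$ is odd neither $2$ nor $k-1$ divides $k$; only singleton blocks survive, so $G$ is primitive. (Alternatively one imitates the conjugate-of-$s$ argument used for Proposition~\ref{prop:noddmis6}.)

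The crux is the Jordan step, where I must exhibit a single cycle of prime length fixing at least three points. For $i=-1$ this is clean and uniform in $a$: the permutation $s^2t$ is a product of transpositions together with a $5$-cycle $(\alpha,\gamma,\epsilon,2a-1,\alpha')$ and a $4$-cycle $(\beta,\delta,2a,\beta')$, so $(s^2t)^4$ is a single $5$-cycle fixing $4a+2\ge 6$ points, and Theorem~\ref{thm:JordanAltSym} gives $G\ge A_k$, whence $G=A_k$ because $s,t$ are even. I expect $i=1$ to be the main obstacle, since the obvious analogue misbehaves: the permutation $(s^2t)^2$ turns out to be a product of \emph{two} disjoint $5$-cycles rather than a single cycle, so the device used in Proposition~\ref{prop:noddmis6} fails, and no power of the underlying $10$-cycle can yield a single $5$-cycle. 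Instead I would pass to an odd exponent: $s^3t$ is a single $11$-cycle times transpositions (with one extra fixed point), so $(s^3t)^2$ is a single $11$-cycle, $11$ being prime, fixing $4a-3$ points. This is at least $3$ as soon as $a\ge 2$, and Jordan's theorem applies. The base case $a=1$ (type $\{8,11\}$, $k=12$) must be handled by hand: there one checks that a suitable power of a short word — for instance a power of $s^5t$ — is a single $3$-cycle, fixing $9$ of the $12$ points, so Theorem~\ref{thm:JordanAltSym} again yields $G=A_k$.

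Finally, chirality follows by inspecting Figure~\ref{fig:noddmis8} and invoking Lemma~\ref{lem:chiraldiag} or Lemma~\ref{lem:chiraldiagsfixedpoint}, exactly as in the earlier propositions: I would locate the unique point $\zeta$ fixed by some $s^bt$ with $b\neq 0$ but not fixed by $t$, and then produce an integer $c$ witnessing the required asymmetry, recording explicit values of $b$ and $c$ for each of the two diagrams. The only place where real care is needed is the $i=1$ Jordan argument, and in particular the separate treatment of $a=1$.
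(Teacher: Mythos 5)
Your proposal is correct and follows essentially the same route as the paper's (sketch) proof: the same Jordan witnesses --- $(s^2t)^4$ a $5$-cycle for $i=-1$ and $(s^3t)^2$ an $11$-cycle for $i=1$ --- the same exceptional case $a=1$, $i=1$ of type $\{8,11\}$, and the same diagram-lemma strategy for chirality (the paper records $b=8$, $c=5$). You in fact go slightly beyond the paper in two places: your block-size divisibility argument makes the primitivity claim self-contained, and your suggested witness for the $\{8,11\}$ case does work --- there $s^5t$ has cycle type $(5,3,2,2)$, so $(s^5t)^{10}$ is a single $3$-cycle fixing nine of the twelve points --- whereas the paper merely asserts $G=A_{12}$.
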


\begin{proof}(Sketch)
It is easy to check the generators are even and that $r,s$ and $t$ have the expected orders, so the corresponding map has type $\{ 8,n \}$. The group $G$ can be shown to be primitive by using similar arguments to those found in the proof of Proposition \ref{prop:noddmis6}. When $i=-1$ the element $(s^2t)^4$ is a $5$-cycle and when $i=1$ the element $(s^3t)^2$ is an $11$-cycle. Combined with Jordan's Theorem \ref{thm:JordanAltSym}, in all cases except when $a=1$ and $i=1$, we may conclude $G = \langle s,t \rangle$ is the alternating group $A_{k}$ as expected. The claim is also true for the map of type $\{ 8,11 \}$ where $G = A_{12}$.
In each case the group generated is $A_{k}$ and by Lemma \ref{lem:chiraldiag} with $b=8$ and $c=5$ with reference to Figure \ref{fig:noddmis8}, we may conclude that the corresponding map $\mathcal{M}(G:r,s)$ is chiral.
\end{proof}

When even $m \geq 10$ our constructions are built on $r$ being the product of an $m$-cycle and an odd number of disjoint transpositions, thereby ensuring that $r$ is an even permutation of order $m$.

\begin{constr}\label{con:noddmisgeq10} For $m \geq 10$. See Figure \ref{fig:noddmgeq10}.

Let $n = m + 4a + i$ where $i \in \{ 1, -1 \}$ and $a \geq 0$.

For $a \geq 1$ let $t_a := \Pi_{j=1}^{a}{(\alpha_j,\alpha_j')(\beta_j,\beta_j')}$ and $r_a := \Pi_{j=1}^{a}{(\alpha_j',\beta_j)(\beta_j',\alpha_{j+1})}$, and when $a=0$ we define $t_a$ and $r_a$ to be the identity. Let $t_i = (8,9)$ when $i=-1$, and when $i=1$ let $t_i = (\alpha_{a+1},\alpha_{a+1}')$. Define 

\noindent $r := (1,2, \dots ,m-1,m).(1',\alpha_1).r_a$ and 
$t := (1,1')(2,3)(4,5)(6,7).(m,m').t_a.t_i$.

\end{constr}
 
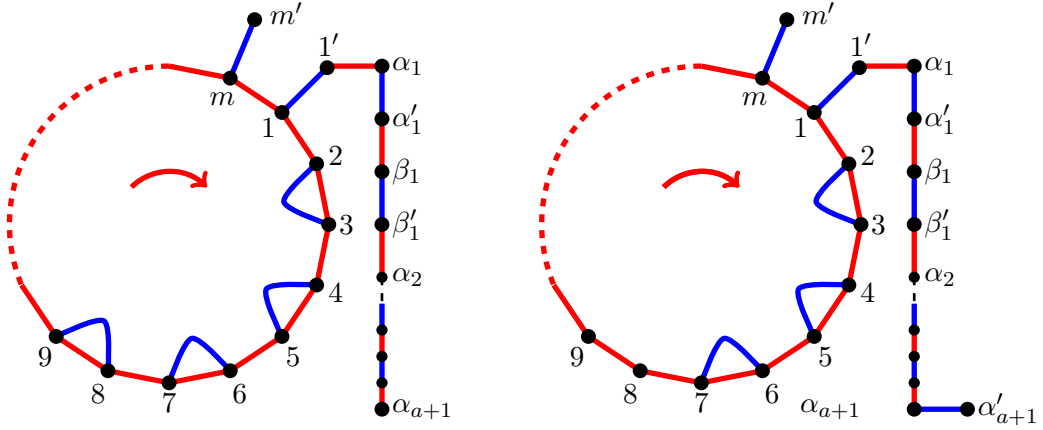
\begin{figure}

\begin{centering}
\begin{tikzpicture}[scale=0.7]

\foreach \numpt in {0,1,2,3,-1,-2,-3,-4,-5,-6,-7}{\draw[red, line width=2pt] (22.5*\numpt:3)--(22.5*\numpt+22.5:3);}

\node[above] at (3,3) {$1 '$};
\node[right] at (1.7,4) {$m '$};

\centrearc[red, line width=2pt, dashed](0,0)(90:202.5:3)

\draw[blue, line width=2pt] plot [smooth] coordinates {(0:3) (11.25:2.2) (22.5:3)};

\draw[blue, line width=2pt] plot [smooth] coordinates {(-22.5:3) (-33.75:2.2) (-45:3)};

\draw[blue, line width=2pt] plot [smooth] coordinates {(-67.5:3) (-78.75:2.2) (-90:3)};

\draw[blue, line width=2pt] plot [smooth] coordinates {(-112.5:3) (-123.75:2.2) (-135:3)};

\draw[blue, line width=2pt] (67.5:3)--(67.5:4.2);
\draw[blue, line width=2pt] (45:3)--(45:4.2);
\draw[red, line width=2pt] (3,3)--(4,3);
\node[right] at (4,3) {$\alpha_1$};
\node[right] at (4,2) {$\alpha_1 '$};
\node[right] at (4,1) {$\beta_1$};
\node[right] at (4,0) {$\beta_1 '$};
\draw[blue, line width=2pt] (4,3)--(4,2);
\draw[red, line width=2pt] (4,2)--(4,1);
\draw[blue, line width=2pt] (4,1)--(4,0);
\draw[red, line width=2pt] (4,0)--(4,-1);
\node[right] at (4,-1) {$\alpha_2$};
\draw[black, line width=1pt, dashed,] (4,-1)--(4,-1.5);
\draw[blue, line width=2pt] (4,-1.5)--(4,-2);
\draw[red, line width=2pt] (4,-2)--(4,-2.5);
\draw[blue, line width=2pt] (4,-2.5)--(4,-3);
\draw[red, line width=2pt] (4,-3)--(4,-3.5);
\node[right] at (4,-3.5) {$\alpha_{a+1}$};
\foreach \numpt in {-0.5,3,4,5,6}{\fill  (4,\numpt-3) circle(4pt);}

\foreach \numpt in {0,1,2,3,-1,-2,-3,-4,-5,-6}{\fill (22.5*\numpt:3) circle(4pt);}

\foreach \numpt in {-4,-5,-5.5,-6}{\fill (4,\numpt+3) circle(3pt);}

\foreach \numpt in {2,3}{\fill (22.5*\numpt:4.2) circle(4pt);}

\centrearc[red, line width=2pt, ->](0,0)(135:45:1)

\node[below] at (1,2.7) {$m$};
\node[below left] at (2.2,2.2) {$1$};
\node[right] at (2.8,1.3) {$2$};
\node[right] at (3,0) {$3$};
\node[right] at (2.8,-1.3) {$4$};
\node[right] at (2,-2.5) {$5$};
\node[right] at (1,-3.2) {$6$};
\node[below] at (0,-3) {$7$};
\node[left] at (-1,-3.2) {$8$};
\node[left] at (-2,-2.5) {$9$};

\centrearc[red, line width=2pt, ->](10,0)(135:45:1)

\foreach \numpt in {0,1,2,3,-1,-2,-3,-4,-5,-6,-7}{\draw[red, line width=2pt, shift={(10,0)}] (22.5*\numpt:3)--(22.5*\numpt+22.5:3);}

\node[above] at (13,3) {$1 '$};
\node[right] at (11.7,4) {$m '$};

\centrearc[red, line width=2pt, dashed](10,0)(90:202.5:3)

\draw[blue, line width=2pt, shift={(10,0)}] plot [smooth] coordinates {(0:3) (11.25:2.2) (22.5:3)};

\draw[blue, line width=2pt, shift={(10,0)}] plot [smooth] coordinates {(-22.5:3) (-33.75:2.2) (-45:3)};

\draw[blue, line width=2pt, shift={(10,0)}] plot [smooth] coordinates {(-67.5:3) (-78.75:2.2) (-90:3)};

\draw[blue, line width=2pt, shift={(10,0)}] (45:3)--(45:4.2);
\draw[red, line width=2pt, shift={(10,0)}] (3,3)--(4,3);
\draw[blue, line width=2pt, shift={(10,0)}] (67.5:3)--(67.5:4.2);
\node[right] at (14,3) {$\alpha_1$};
\draw[blue, line width=2pt, shift={(10,0)}] (4,3)--(4,2);
\node[right] at (14,2) {$\alpha_1 '$};
\draw[red, line width=2pt, shift={(10,0)}] (4,2)--(4,1);
\node[right] at (14,1) {$\beta_1$};
\draw[blue, line width=2pt, shift={(10,0)}] (4,1)--(4,0);
\node[right] at (14,0) {$\beta_1 '$};
\draw[red, line width=2pt, shift={(10,0)}] (4,0)--(4,-1);
\node[right] at (14,-1) {$\alpha_2$};
\draw[black, line width=1pt, dashed, shift={(10,0)}] (4,-1)--(4,-1.5);
\draw[blue, line width=2pt, shift={(10,0)}] (4,-1.5)--(4,-2);
\draw[red, line width=2pt, shift={(10,0)}] (4,-2)--(4,-2.5);
\draw[blue, line width=2pt, shift={(10,0)}] (4,-2.5)--(4,-3);
\draw[red, line width=2pt, shift={(10,0)}] (4,-3)--(4,-3.5);
\node[left] at (13.2,-3.5) {$\alpha_{a+1}$};
\draw[blue, line width=2pt, shift={(10,0)}] (4,-3.5)--(5,-3.5);
\node[right] at (15,-3.5) {$\alpha_{a+1}'$};
\foreach \numpt in {-0.5,3,4,5,6}{\fill  (14,\numpt-3) circle(4pt);}
\fill  (15,-3.5) circle(4pt);

\foreach \numpt in {0,1,2,3,-1,-2,-3,-4,-5,-6}{\fill (10,0) ++ (22.5*\numpt:3) circle(4pt);}

\node[below] at (11,2.7) {$m$};
\node[below left] at (12.2,2.2) {$1$};
\node[right] at (12.8,1.3) {$2$};
\node[right] at (13,0) {$3$};
\node[right] at (12.8,-1.3) {$4$};
\node[right] at (12,-2.5) {$5$};
\node[right] at (11,-3.2) {$6$};
\node[below] at (10,-3) {$7$};
\node[left] at (9,-3.2) {$8$};
\node[left] at (8,-2.5) {$9$};

\foreach \numpt in {-4,-5,-5.5,-6}{\fill (10,0) ++ (4,\numpt+3) circle(3pt);}

\foreach \numpt in {2,3}{\fill (10,0) ++ (22.5*\numpt:4.2) circle(4pt);}

\centrearc[red, line width=2pt, ->](10,0)(135:45:1)

\end{tikzpicture}
\caption{Construction \ref{con:noddmisgeq10} when $i=-1$ and $i=1$ respectively}\label{fig:noddmgeq10}
\end{centering}
\end{figure}

\begin{prop}\label{prop:noddmgeq10}
Let $m$ be even such that $10 \leq m < n$ where $n$ is odd and let $n$, $r$ and $t$ be defined according to Construction \ref{con:noddmisgeq10}. Then $\mathcal{M}(G:r,s)$ is a chiral map of type $\{ m,n \}$ and $G$ is an alternating group.
\end{prop}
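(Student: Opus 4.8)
This proposition follows the same four-part template as Propositions \ref{prop:noddmis4}--\ref{prop:noddmis8}: verify the orders (hence the type), verify primitivity, identify a suitable cycle to invoke Jones' or Jordan's theorem, and finally apply one of the chirality lemmas. The main difference here is structural: $r$ is now built explicitly as an $m$-cycle times disjoint transpositions (the factors $r_a$ and $(1',\alpha_1)$), so it is $r$ rather than $s$ that has the clean cycle structure, and the arguments should be phrased in terms of $r$ and $t$ throughout.

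First I would confirm the type. By construction $t$ is a product of disjoint transpositions, so $t^2=1$ and $t$ is an involution. For $r$ one checks that the $m$-cycle $(1,2,\dots,m)$ is disjoint from the transpositions comprising $(1',\alpha_1)r_a$, except that these transpositions together with the $m$-cycle should multiply to give a single element of order exactly $m$; since $r$ is a product of an $m$-cycle with an odd number of disjoint $2$-cycles, its order is $\mathrm{lcm}(m,2)=m$ (as $m$ is even) and it is an even permutation. The involution count for $t$ must also come out even. Then $s=r^{-1}t$ (equivalently $t=rs$) and I would compute the cycle type of $s=tr^{-1}$ directly from the two diagrams to confirm it is a single $n$-cycle, giving a map of type $\{m,n\}$.

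Next, primitivity. As in the earlier propositions, the most efficient route is to exhibit a point whose stabiliser is transitive on the remaining points, or to show $s$ is a single $n$-cycle fixing few points so that no nontrivial block system can be invariant. For the $a=0$ subcase this is immediate; for $a\geq 1$ one argues, as in Proposition \ref{prop:noddmis6}, that a single long cycle together with one of its conjugates under $t$ (or under $tst$) pins down a common fixed point whose stabiliser acts transitively, forcing primitivity. To reach $G\geq A_k$ I would exhibit an explicit power of the form $(r^jt)^e$ or $(s^jt)^e$ that reduces to a single cycle of prime length fixing at least three points, then invoke Jordan's Theorem \ref{thm:JordanAltSym} (or Jones' Theorem \ref{thm:JonesAltSym} if the convenient cycle is not of prime length but has $f\geq 3$ fixed points); since all generators are even, $G=A_k$ exactly.

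Finally, chirality follows from Lemma \ref{lem:chiraldiag} or Lemma \ref{lem:chiraldiagsfixedpoint}: one locates the unique point $\zeta$ fixed by some $s^bt$ but not by $t$ (the asymmetry introduced by the half-edges $\alpha_j,\beta_j$ and the parity-correcting factor $t_i$ should make such a point visible in Figure \ref{fig:noddmgeq10}) and checks the second condition by a short computation. \textbf{The main obstacle} I anticipate is uniformity: because the construction has a general parameter $a\geq 0$ and a case split on $i=\pm1$, the single-cycle-of-prime-length witness may depend on $a$ and may fail for small $a$ (as happened for $a=1,i=1$ in Proposition \ref{prop:noddmis8}), so the delicate part is choosing one family of witnessing cycles that works for all $a$ simultaneously, and separately verifying by hand any finitely many small exceptional cases where the generic cycle degenerates.
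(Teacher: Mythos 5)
Your outline follows the paper's template, but at both decisive junctures the specifics diverge in ways worth recording. For the alternating step the paper does not hunt for a witnessing cycle at all: it simply observes that $s$ itself is a single $n$-cycle with three or four fixed points (this is precisely why Construction \ref{con:noddmisgeq10} loads all the transpositions into $r$ and $t$), so once primitivity is established, Jones' Theorem \ref{thm:JonesAltSym} applies immediately and uniformly in $a$ and $i$. Your anticipated main obstacle --- an $a$-dependent single-cycle witness degenerating for small $a$ --- therefore never materialises; conversely, your stated preference for Jordan's Theorem \ref{thm:JordanAltSym} cannot be realised here, since the natural witness is $s$ with only $3$ or $4$ fixed points and length $n$ not necessarily prime, and the paper explicitly concedes that for this proposition it must ``unfortunately'' rely on the classification. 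Primitivity is likewise run through $r$: the paper exhibits $m'$ as fixed by $r$ and by $tr^2tr^{-3}t$ and checks that the stabiliser is transitive on the remaining points, which is in the spirit of what you sketch.

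At the chirality step there is a genuine gap in your plan. You propose to apply Lemma \ref{lem:chiraldiag} or \ref{lem:chiraldiagsfixedpoint} via a unique point fixed by a word of the form $s^bt$; the paper instead needs the mixed words $r^7(tr^{-1})^3t$ (when $i=-1$) and $r^5(tr^{-1})^2t$ (when $i=1$), which fix the point labelled $2$ uniquely, and then runs the relabelling contradiction by hand because these words are not of the shape either lemma covers. Moreover, the generic contradiction ($2\pi_\phi r$ fixed by $t$ while $2r^{-1}$ is not) fails in exactly one case, $m=10$ with $i=-1$, where a separate argument is required ($2r^{-1}t=1'$ is not fixed by $r$ while $9rt=10'$ is). So the ``short computation'' you defer conceals both the search for the correct non-standard word and an exceptional case your proposal does not flag; as written, the plan to locate a unique $s^bt$-fixed point is not known to succeed for this construction.
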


\begin{proof}
Each generator is by construction an even permutation and $r$ has order $m$ while $t$ is an involution.
The permutation $s$ is a single cycle (of length $n$) with either three or four fixed points, and so the corresponding map $\mathcal{M}(G:r,s)$ is of type $\{ m, n \}$.
So long as we can prove $G$ is primitive then we may call on Jones' generalisation of Jordan's Theorem \ref{thm:JonesAltSym} (and so also unfortunately on the classification of finite simple groups) to confirm the claim that the group $G$ is alternating.

In both cases, the point labelled $m'$ is fixed by both $r$ and also by $tr^2tr^{(-3)}t$. Notice that the latter permutation sends $\beta_j$ to $\alpha_j$, and $\alpha_1$ to $1$ and, when $i=1$ it sends $\alpha'_{a+1}$ to $\alpha_{a+1}$. Combined with the definition of $r$ one can see that $Stab_{G}(m')$ is transitive on all other points, and so the group $G$ is primitive. Since $G$ is generated by even permutations, we may now conclude the group is the alternating group of the corresponding degree.

The map is chiral by the absence of reflective symmetry in the diagrams of Figure \ref{fig:noddmgeq10}. In particular when $i=-1$ there is a unique point ($2$) fixed by $r^7(tr^{-1})^3t$, and when $i=1$ the point labelled $2$ is the unique point fixed by $r^5(tr^{-1})^2t$. In each case this determines how an inverting automorphism $\phi$ (mapping $r$ to its inverse and fixing $t$) must behave, if indeed it exists, which we now assume. Bearing in mind that the group $G = \langle r,t \rangle$ is alternating, there would be an associated relabelling $\pi_\phi$ of the point labelled $2$: respectively $2 \pi_\phi=9$ or $2 \pi_\phi=7$. Note that the point $2r^{-1}$ is not fixed by $t$ in either case. However, $2 \pi_\phi r$ is fixed by $t$ in all cases except when $m=10$ and $i=-1$.
In this special case the point $2r^{-1}t = 1'$ is not fixed by $r$ while $9rt=10'$ is fixed by $r$. In every case there is a contradiction: such a relabelling $\pi_\phi$ cannot correspond to an automorphism of the permutation group $G$ which fixes $t$ and inverts $r$. We conclude that the maps resulting from Construction \ref{con:noddmisgeq10} are chiral.
\end{proof}

\subsection{Even $m > n$ odd}

We apply a different approach to this situation, remembering that if $s$ is the product of any number of distinct $n$-cycles then it will be an even permutation. Then we must ensure that we can build in an even involution $t$ such that their product $st$ has order $m$ and the permutation diagram is connected. By ensuring the generators are even permutations, the cycle structure for the product $st$ in each of our constructions turns out to be of the form $2.m$ or $2^3.m$. The latter occurs when $s$ is defined as having more than one $n$-cycle, and the constructions are then such that $s^2t$ is a single cycle fixing three points, allowing us to apply Theorem \ref{thm:JonesAltSym}.

The situation for small values of $n$ are thus treated differently. Remember that Bujalance, Conder and Costa's work, Theorem \ref{thm:BCC}, means that the case when $n=3$ need not be addressed. The next smallest odd value is when $n=5$, and the following construction gives an example of an alternating chiral map for all but finitely many types $\{ m, 5 \}$.

\begin{constr}\label{con:n=5}For $n=5$ and even $m$ such that $m +6 = 5 \nu + a $ where $\nu \geq 4$ and $0 \leq a \leq 4$.
When $1 \leq a \leq 4$ define $t_a = \prod_{j=1}^{a}(5 \nu +1 - j,5 \nu +1 -j')$ and when $a=0$ define $t_a$ to be the identity.
Meanwhile, let $t_\nu := \prod_{i=1}^{\nu - 1}(5i,5i+1)$ and define $t := (2,4)(7,9)(12,14)t_a t_\nu$ and

\noindent $s := \prod_{i=0}^{\nu -1}(5i+1,5i+2,5i+3,5i+4,5i+5)$.
\end{constr}

\begin{prop}\label{prop:mgeqnequals5}
Let $n=5$ and $m, r, t$ be defined by Construction \ref{con:n=5}. Then $G = \langle r, t \rangle $ is alternating and the map $\mathcal{M}(G ; r,s)$ is chiral of type $\{m,5 \}$.
\end{prop}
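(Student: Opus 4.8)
The plan is to follow the template of the earlier propositions: pin down the orders and parities of the generators, read off the type from the cycle structure of $r^{-1}=st$, prove $G=A_k$ by combining primitivity with Theorem \ref{thm:JonesAltSym} applied to a suitable cycle, and finally verify chirality. First I would record the routine facts. The element $s$ is a product of $\nu$ disjoint $5$-cycles, hence an even permutation of order $5$ fixing exactly the $a$ primed points. The involution $t$ is a product of $3+a+(\nu-1)=\nu+a+2$ disjoint transpositions; since $m$ is even we have $5\nu+a=m+6$ even, so $\nu+a$ is even and $t$ is an even involution. Thus $G\le A_k$ with $k=5\nu+a=m+6$. I would then compute $st=r^{-1}$ directly: the three internal transpositions $(2,4),(7,9),(12,14)$ produce the $2$-cycles $(2,3),(7,8),(12,13)$, while the remaining $k-6=m$ points are swept into a single $m$-cycle by the interplay of $s$ with the linking transpositions $t_\nu$ and the fold at $t_a$. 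Hence $st$ has cycle type $2^3\cdot m$, so $r$ has order $m$ and the map has type $\{m,5\}$.

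For $G=A_k$ the key element is $\sigma:=s^2t$. A short calculation shows $\sigma$ fixes exactly the three midpoints $\{2,7,12\}$ (each $\zeta$ satisfies $\zeta s^2=\zeta t$ through its special transposition, and no other point can), and that all remaining $k-3$ points lie in a single long cycle. Connectedness of the diagram gives transitivity, and I would establish primitivity as follows. In any nontrivial block system a block meeting a fixed point of $\sigma$ is $\sigma$-invariant, and a $\sigma$-invariant block cannot contain a moved point, since otherwise it would absorb the whole $(k-3)$-cycle and, as $k\ge 20$, be forced to be everything. Hence the invariant blocks consist solely of the three fixed points, which forces block size $3$ with $\{2,7,12\}$ itself a block; but then $\{2,7,12\}^{s^{-1}}=\{1,6,11\}$ would be a block whose $t$-image $\{1,5,10\}$ meets it in the single point $1$, a contradiction. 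With $G$ primitive and $\sigma$ a cycle fixing $f=3\ge 3$ points, Theorem \ref{thm:JonesAltSym} gives $G\ge A_k$, and evenness gives $G=A_k$.

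Finally, chirality. Since $k\ge 20$ we have $Aut(A_k)=S_k$, so reflexibility would provide $\pi\in S_k$ with $s^\pi=s^{-1}$ and $t^\pi=t$. Conjugation by $\pi$ carries $s^2t$ to $s^{-2}t=s^3t$, hence carries $\mathrm{Fix}(s^2t)=\{2,7,12\}$ to $\mathrm{Fix}(s^3t)=\{4,9,14\}$; in particular $2\pi\in\{4,9,14\}$. The relation $s^\pi=s^{-1}$ links $1\pi$ and $2\pi$ via $1\pi=(2\pi)s$, so $1\pi\in\{5,10,15\}$. But $1$ is fixed by $t$ whereas each of $5,10,15$ is moved by $t$ (this is exactly where the hypothesis $\nu\ge 4$ is used, guaranteeing that $(5,6),(10,11),(15,16)$ are transpositions of $t_\nu$), and $\pi$ must preserve the set of $t$-fixed points because it centralises $t$. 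This contradiction shows no such $\pi$ exists, so the map is chiral.

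I expect the main obstacle to be twofold. Computationally, the delicate point is confirming that $\sigma=s^2t$ is a \emph{single} $(k-3)$-cycle rather than several cycles, which requires carefully tracking how the chain turns around in the top block, where $t_\nu$ terminates and $t_a$ folds the primed points back in. Conceptually, the subtlety is that the three symmetric special transpositions make $s^bt$ fix three points for every relevant $b$, so the uniqueness hypothesis of Lemmas \ref{lem:chiraldiag} and \ref{lem:chiraldiagsfixedpoint} fails here; the chirality therefore has to be argued through the direct fixed-point-set computation above rather than by a bare appeal to those lemmas.
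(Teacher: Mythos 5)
Your proposal is correct, and its skeleton coincides with the paper's: verify parities and the cycle type $2^3\cdot m$ of $st$, use $\sigma=s^2t$ (a single long cycle fixing exactly $\{2,7,12\}$) both for primitivity and for Theorem \ref{thm:JonesAltSym}, and prove chirality by a direct fixed-point argument resting on $Aut(A_k)=S_k$ --- you also correctly anticipated that Lemmas \ref{lem:chiraldiag} and \ref{lem:chiraldiagsfixedpoint} are not invoked here, and indeed the paper argues chirality directly for this proposition. You deviate in two sub-steps. For primitivity the paper conjugates, noting that $s^2t$ and $(s^2t)^{(ts)^4}$ are long cycles sharing only the fixed point $2$, so that $Stab_G(2)$ is transitive; your block-system analysis (blocks meeting $\mathrm{Fix}(\sigma)$ are $\sigma$-invariant, hence contained in $\{2,7,12\}$, forcing the block $\{2,7,12\}$, whose images $\{1,6,11\}$ under $s^{-1}$ and $\{1,5,10\}$ under $t$ overlap in the single point $1$) is more laborious but fully self-contained, and is arguably spelled out in more detail than the paper's one-line sketch. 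For chirality the paper observes that $1$ is the unique point fixed by both $t$ and $sts^2$, while no point is fixed by both $t$ and $s^{-1}ts^{-2}$, an immediate counting contradiction; your variant transports $\mathrm{Fix}(s^2t)=\{2,7,12\}$ to $\mathrm{Fix}(s^3t)=\{4,9,14\}$, deduces $1\pi\in\{5,10,15\}$ from $s^\pi=s^{-1}$, and contradicts preservation of $\mathrm{Fix}(t)$ --- the same mechanism with a different witness, at the mild cost of having to certify the full fixed-point set of $s^3t$ (your use of $\nu\geq 4$ to ensure $(15,16)$ lies in $t_\nu$ is the right care point). Both routes are sound; the paper's choices are marginally shorter, yours make the verifications more explicit.
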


\begin{proof}
Certainly $s$ has order five, $t$ is an involution, and the permutation

\noindent $st = (1,4,6,9,11,14, \dots 5i+1, 5i+2, 5i+3, 5i+4. 5i+6 \dots 5\nu -4, \Pi, 5\nu -5, \dots, 5i, \dots 15, 10, 5)(2,3)(7,8)(12,13) $,
where $\Pi$ is the natural subsequence of $4+a$ points from the ordered list $5\nu -3',5\nu -3, 5\nu-2', 5\nu -2, 5\nu-1', 5\nu -1, 5\nu', 5\nu$.
This has (even) order $5 \nu +a -6$,
so $r=ts^{-1}$ also has order $m$,
and thus $\mathcal{M}(G:r,s)$ is a map of type $\{ m,5 \}$.

It can be checked that the permutation $s^2t$ has three fixed points, namely $2$, $7$, and $12$, and a single long cycle, as does its conjugate $(s^2t)^{((ts)^4)}$. With only one fixed point in common, we have that the stabiliser of the point labelled $2$ is transitive and so the group $G$ is primitive. We then may use the same element $s^2t$, which has a single cycle and three fixed points, to apply Jones' Theorem \ref{thm:JonesAltSym} and conclude the group is alternating. Hence $G = A_k$ where $k = 5\nu+a$ is the degree of the defined permutation group.

Chirality of the map $\mathcal{M}(G:r,s)$ can be confirmed by considering the point labelled $1$ which is unique in being fixed by both $t$ and also by $sts^2$. In contrast, there is no point which is fixed by $t$ and also by $s^{-1}ts^{-2}$ and, since $Aut(\mathcal{M}) = G = A_k$ and therefore $Aut(G) = S_k$, there can be no group automorphism fixing $t$ and inverting $s$.
\end{proof}

When $m = n+a$ with $n \geq 7$ and $1 \leq a \leq n-6$ there is an easy construction which covers this infinitude of cases and whose diagram is as shown in Figure \ref{fig:ngeq7}: $s$ is a single $n$-cycle, while $t$ consists of $a+2$ pendant edges from points on the cycle, and one edge transposing two points on the cycle. To formalise, we define the following construction.

\begin{constr}\label{con:ngeq7} For even $m=n+a$ when odd $n \geq 7$ and $1 \leq a \leq n-6$.

Define $s := (1,2,3, \dots , n-1,n)$ and $t := (1,3)\prod_{j=4}^{5+a}(j,j')$.
\end{constr}

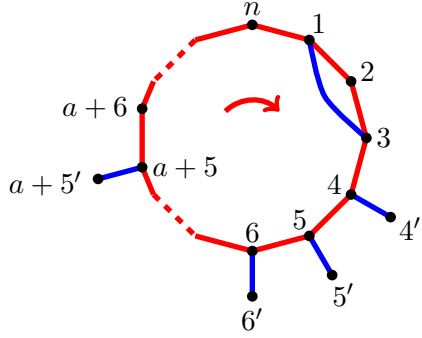
\begin{figure}

\begin{centering}
\begin{tikzpicture}[scale=0.5]

\foreach \numpt in {0,1,2,3,-1,-2,-3,-4}{\draw[red, line width=2pt] (30*\numpt:3)--(30*\numpt+30:3);}

\foreach \numpt in {4,-5}{\draw[red, dashed, line width=2pt] (30*\numpt:3)--(30*\numpt+30:3);}

\draw[blue, line width=2pt] plot [smooth] coordinates {(60:3) (30:2.2) (0:3)};

\draw[blue, line width=2pt] (-30:3)--(-30:4.2);
\draw[blue, line width=2pt] (-60:3)--(-60:4.2);
\draw[blue, line width=2pt] (-90:3)--(-90:4.2);

\draw[red, line width=2pt] (165:3)--(150:3);
\draw[red, line width=2pt] (195:3)--(-150:3);

\draw[red, line width=2pt] (165:3)--(195:3);
\draw[blue, line width=2pt] (195:3)--(195:4.2);
\fill (165:3) circle(4pt);
\fill (195:3) circle(4pt);
\fill (195:4.2) circle(4pt);

\foreach \numpt in {0,1,2,3,-1,-2,-3}{\fill (30*\numpt:3) circle(4pt);}

\foreach \numpt in {-1,-2,-3}{\fill (30*\numpt:4.2) circle(4pt);}

\node[above] at (0,3) {$n$};
\node[] at (-60:4.8) {$5'$};
\node[] at (60:3.5) {$1$};
\node[] at (30:3.5) {$2$};
\node[right] at (3,0) {$3$};
\node[] at (-30:4.8) {$4 '$};

\node[] at (-30:2.5) {$4$};
\node[] at (-60:2.5) {$5$};
\node[] at (-90:2.5) {$6$};
\node[] at (-90:4.8) {$6'$};

\node[left] at (-4.2,-1.2) {$a+5'$};
\node[right] at (-2.9,-0.8) {$a+5$};
\node[left] at (-3,0.8) {$a+6$};

\centrearc[red, line width=2pt, ->](0,0)(135:45:1)

\end{tikzpicture}
\caption{Construction \ref{con:ngeq7} when odd $n \geq 7$}\label{fig:ngeq7}
\end{centering}
\end{figure}

\begin{prop}\label{prop:mgeqngeq7}
Let $n \geq 7$ with $n$, $m$, $r$ and $t$ being defined according to Construction \ref{con:ngeq7}. Then $\mathcal{M}(G:r,s)$ is a chiral map of type $\{ m,n \}$ and $G$ is an alternating group.
\end{prop}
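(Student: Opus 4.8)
The plan is to proceed in four steps: confirm the orders and type, prove $G$ is primitive, deduce $G=A_k$, and finally extract chirality from Lemma~\ref{lem:chiraldiag}. Throughout write $C=\{1,\dots,n\}$, on which $s$ acts as a single $n$-cycle, and $P=\{4',\dots,(5+a)'\}$ for the $a+2$ pendant points, all fixed by $s$; the degree is $k=n+a+2=m+2$. First I would record the routine facts. Because $m$ is even and $n$ odd, $a=m-n$ is odd, so $t$ is a product of $a+3$ transpositions, an even number, and is an even permutation; the odd cycle $s$ is even too, so $G\le A_k$. Tracing $st$ shows it is the transposition $(1,2)$ times a single $m$-cycle (the long cycle threads $3\to 4'\to 4\to 5'\to\cdots\to(5+a)\to(6+a)\to\cdots\to n\to 3$), so $r=(st)^{-1}$ has order $m$ and $\mathcal{M}(G:r,s)$ has type $\{m,n\}$.

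The heart of the argument is primitivity of $G=\langle s,t\rangle$, which I would establish by excluding a nontrivial block system. Since $s$ fixes every pendant and acts as one cycle on $C$, any block meeting $P$ is fixed setwise by $s$, hence a union of $\langle s\rangle$-orbits; a block containing all of $C$ has size $\ge n>k/2$ and so must be all of $\Omega$. Consequently every block lies entirely in $C$ or entirely in $P$, the common block size $d$ divides $\gcd(n,a+2)$, and the blocks inside $C$ are the cosets of the order-$d$ subgroup of $\mathbb{Z}_n\cong C$, i.e. progressions of common difference $g=n/d$. I would then play the two parts of $t$ against this structure: the block through $4'$ is carried by $t$ into the interval $\{4,\dots,5+a\}$, so an entire coset of step $g$ must fit inside an interval of length $a+1$, forcing $g\ge n-a-1$; but then (using $g=n/d\le n/3$, as $n$ is odd) the second element $1+g$ of the block through $1$ already lies in $\{4,\dots,5+a\}$, whence $t$ sends it to a pendant while sending $1$ to $3\in C$, contradicting that blocks stay inside $C$ or $P$. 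Hence $d=1$. Getting these interval estimates to close uniformly over all admissible $(n,a)$ is the step I expect to demand the most care; should the bookkeeping prove awkward, an alternative is to show a point-stabiliser is transitive by pairing $s$ with a conjugate fixing exactly one common pendant.

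With primitivity secured the conclusion on the group is immediate: $s$ is a single cycle fixing $a+2\ge 3$ points, so Jones' Theorem~\ref{thm:JonesAltSym} gives $G\ge A_k$, and together with $G\le A_k$ this yields $G=A_k$ with $k=n+a+2$.

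For chirality I would apply Lemma~\ref{lem:chiraldiag}, which is available here since $\mathrm{Aut}(A_k)=S_k$ for $k\ge 7$ (and $k\ge 10$ in all our cases). Take $b=2$: then $1\cdot s^2=3=1\cdot t$, so $\zeta=1$ is fixed by $s^2t$ but not by $t$, and a short inspection of the remaining points confirms it is the unique such point. Taking $c=-1$, the point $\zeta s^{-1}=n$ is fixed by $t$ while $\zeta t s=4$ is not (it is moved to the pendant $4'$), so both bullet conditions of Lemma~\ref{lem:chiraldiag} hold and $\mathcal{M}(G:r,s)$ is chiral of type $\{m,n\}$.
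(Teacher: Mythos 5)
Your proposal is correct, and its skeleton matches the paper's: the type computation via the cycle structure of $st$ (a transposition times an $m$-cycle), Jones' Theorem~\ref{thm:JonesAltSym} applied to the single $n$-cycle $s$ with its $a+2\geq 3$ fixed points, and chirality via Lemma~\ref{lem:chiraldiag} with exactly the paper's parameters $b=2$, $c=-1$ (your witnesses $\zeta=1$, $\zeta s^{-1}=n$ fixed by $t$, $\zeta t s=4$ moved to $4'$ are precisely the paper's). Where you genuinely diverge is primitivity, which the paper dispatches in one line: $s^2t$ is a single cycle with a single fixed point (the point $1$), i.e.\ a $(k-1)$-cycle, so the stabiliser of $1$ is transitive on the remaining points, making $G$ $2$-transitive and hence primitive. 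Your block-system exclusion is instead carried out by hand: blocks meeting the pendant set $P$ are $s$-invariant and so lie in $P$ or swallow all of $C$; blocks inside $C$ are cosets of step $g=n/d$; the image under $t$ of a block in $P$ is a full coset of equally spaced points squeezed into an interval of $a+2$ consecutive positions, forcing $g\geq n-a-1$, against $g\leq n/3$ (as $n$ is odd, so $d\geq 3$), after which $1+g$ lands in the pendant range and $t$ splits the block through $1$ across $C$ and $P$. I checked the inequalities and they do close uniformly over all admissible $(n,a)$ with $1\leq a\leq n-6$, so your argument is sound, and it has the virtue of being self-contained and elementary; but it is considerably longer than necessary, and amusingly your own uniqueness check for $\zeta$ in the chirality step amounts to observing that $s^2t$ is a $(k-1)$-cycle fixing only $1$ --- the very fact that would have given you $2$-transitivity, and hence primitivity, immediately. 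Two cosmetic points: primitivity presupposes transitivity, which you use implicitly (block sizes dividing $k$) but never state --- it is immediate since $s$ is transitive on $C$ and $t$ joins each pendant to $C$; and the statement's hypothesis $a\geq 1$ is what guarantees $k=m+2\geq n+3\geq 10$, so your appeal to $\mathrm{Aut}(A_k)=S_k$ is safe, as you note.
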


\begin{proof}
The order of $r^{-1}=st$ is $m = n+a$, which is even and as expected. The group generated is transitive and also primitive since $s^2t$ is a cycle with a single fixed point. Both $s$ and $t$ are even permutations, and $s$ is a single cycle with enough fixed points ($a+2 \geq 3$) to allow us to apply Jones' version of Jordan's theorem. Thus Theorem \ref{thm:JonesAltSym} proves the group is alternating. Note that in any case $t$ fixes the point labelled $n$ and this ensures the diagram in Figure \ref{fig:ngeq7} has no reflective symmetry. The resulting map is chiral by Lemma \ref{lem:chiraldiag} with $b=2$ and $c=-1$.
\end{proof}

There is a further construction which covers many more cases as follows:

\begin{constr}\label{con:ngeq11} For even $m +6 = \nu n + a $ with odd $n \geq 11$, $\nu \geq 2$ and $ 0 \leq a \leq n-1$. When $1 \leq a \leq n-1$ define $t_a = \prod_{j=1}^{a}(j,j')$ and when $a=0$ define $t_a$ to be the identity.
Meanwhile, let $t_\nu := \prod_{i=1}^{\nu - 1}(in,in+1)$ and define

\noindent $s := (1,2, \dots , n-1,n)(n+1,n+2, \dots, 2n) \dots ((\nu -1)n+1, \dots , \nu n-1, \nu n)$ and 
$t := (n+2,n+4)(n+5,n+7)(n+8,n+10)t_a t_\nu$.
\end{constr}

A permutation diagram showing an example of Construction \ref{con:ngeq11} for the case when $a=5$ and $\nu = 3$ is shown in Figure \ref{fig:ngeq11}.

\begin{figure}

\begin{centering}
\begin{tikzpicture}[scale=0.5]

\foreach \numpt in {0,1,2,3,4,-1,-2,-3,-4,-5,-6,-7}{\draw[red, line width=2pt] (22.5*\numpt:3)--(22.5*\numpt+22.5:3);}

\centrearc[red, line width=2pt, dashed](0,0)(112.5:202.5:3)

\draw[blue, line width=2pt] (67.5:3)--(67.5:4.2);
\draw[blue, line width=2pt] (45:3)--(45:4.2);
\draw[blue, line width=2pt] (22.5:3)--(22.5:4.2);
\draw[blue, line width=2pt] (0:3)--(0:4.2);
\draw[blue, line width=2pt] (-22.5:3)--(-22.5:4.2);

\foreach \numpt in {0,1,2,3,4,-1,-2,-3,-4,-5,-6}{\fill (22.5*\numpt:3) circle(4pt);}

\foreach \numpt in {-1,0,1,2,3}{\fill (22.5*\numpt:4.2) circle(4pt);}

\centrearc[red, line width=2pt, ->](0,0)(135:45:1)

\foreach \numpt in {0,1,2,3,4,-1,-2,-3,-4,-5,-6,-7}{\draw[red, line width=2pt, shift={(-8,0)}] (22.5*\numpt:3)--(22.5*\numpt+22.5:3);}

\centrearc[red, line width=2pt, dashed, shift={(-8,0)}](0,0)(112.5:202.5:3)

\centrearc[red, line width=2pt, dashed, shift={(-16,0)}](0,0)(112.5:202.5:3)

\draw[blue, line width=2pt,] plot [smooth] coordinates {(0,3) (-3.5,4) (-7,2.7)};

\draw[blue, line width=2pt, shift={(-8,0)}] plot [smooth] coordinates {(0:3) (22.5:2.2) (45:3)};

\draw[blue, line width=2pt, shift={(-8,0)}] plot [smooth] coordinates {(-22.5:3) (-45:2.2) (-67.5:3)};

\draw[blue, line width=2pt, shift={(-8,0)}] plot [smooth] coordinates {(-90:3) (-112.5:2.2) (-135:3)};

\foreach \numpt in {0,1,2,3,4,-1,-2,-3,-4,-5,-6}{\fill[shift={(-8,0)}] (22.5*\numpt:3) circle(4pt);}

\centrearc[red, line width=2pt, ->](-8,0)(135:45:1)

\draw[blue, line width=2pt,] plot [smooth] coordinates {(-8,3) (-11.5,4) (-15,2.7)};

\foreach \numpt in {0,1,2,3,4,-1,-2,-3,-4,-5,-6,-7}{\draw[red, line width=2pt, shift={(-16,0)}] (22.5*\numpt:3)--(22.5*\numpt+22.5:3);}

\foreach \numpt in {0,1,2,3,4,-1,-2,-3,-4,-5,-6}{\fill[shift={(-16,0)}] (22.5*\numpt:3) circle(4pt);}

\centrearc[red, line width=2pt, ->](-16,0)(135:45:1)

\node[] at (67.5:2.5) {$1$};
\node[] at (45:2.5) {$2$};
\node[] at (22.5:2.5) {$3$};
\node[] at (-22.5:2.5) {$5$};
\node[] at (-45:2.5) {$6$};
\node[] at (-67.5:2.5) {$7$};
\node[] at (-90:2.5) {$8$};
\node[] at (-112.5:2.5) {$9$};
\node[] at (-135:2.5) {$10$};

\node[] at (67.5:4.8) {$1'$};
\node[] at (45:4.8) {$2'$};
\node[] at (22.5:4.8) {$3'$};
\node[] at (-22.5:4.8) {$5'$};

\node[below] at (0,3) {$n$};
\node[below] at (-8,3) {$2n$};
\node[below] at (-16,3) {$3n$};

\node[left] at (3,0) {$4$};
\node[right] at (4.2,0) {$4 '$};

\node[] at (-4.7,2.2) {$n+2$};
\node[] at (-4,1.1) {$n+3$};
\node[] at (-4.7,-2.2) {$n+6$};

\node[] at (-12.6,2.2) {$2n+2$};
\node[] at (-12,1.1) {$2n+3$};
\node[] at (-12.6,-2.2) {$2n+6$};

\end{tikzpicture}
\caption{Example of Construction \ref{con:ngeq11} when $n \geq 11$, $a=5$, $\nu=3$}\label{fig:ngeq11}
\end{centering}
\end{figure}
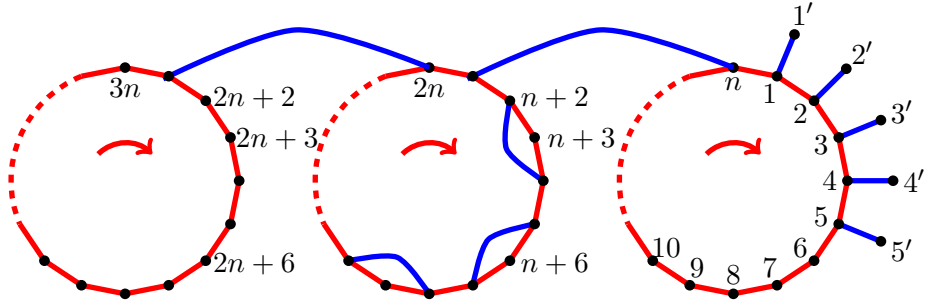

\begin{prop}\label{prop:mgeqngeq11}
Let $n \geq 11$ be odd, with even $m$ and the permutations $s$ and $t$ being defined by Construction \ref{con:ngeq11}. Then $\mathcal{M}(G:r,s)$ is a chiral map of type $\{ m,n \}$ and $G$ is an alternating group.
\end{prop}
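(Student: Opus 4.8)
The plan is to follow the template established for the other ``$m>n$'' constructions, and in particular to mirror the argument given for the $n=5$ case in Proposition~\ref{prop:mgeqnequals5}, since here too $s$ is a product of several disjoint $n$-cycles and $t$ is built from three ``distance-two'' transpositions together with the linking transpositions $t_\nu$ and the pendant edges $t_a$. First I would record the orders. The permutation $s$ is a product of $\nu$ disjoint $n$-cycles, hence even of order $n$. The involution $t$ is a product of $3+a+(\nu-1)=a+\nu+2$ transpositions, and this number is even: the defining relation $m+6=\nu n+a$ together with $m$ even and $n$ odd forces $a+\nu$ to be even, so $t$ is an even permutation. It then remains to compute the cycle structure of $r^{-1}=st$, which I expect to be of the form $2^3\cdot m$, the three $2$-cycles arising from the three special transpositions $(n+2,n+4)$, $(n+5,n+7)$, $(n+8,n+10)$ meeting the shift induced by $s$, with all remaining points (there are $m$ of them, since the degree is $k=\nu n+a=m+6$) threaded into a single $m$-cycle by the linking transpositions $t_\nu$ and the pendants $t_a$. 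Since $m$ is even, $\mathrm{lcm}(2,m)=m$, so $r$ has order $m$ and the map has type $\{m,n\}$.

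Next I would establish that $G$ is alternating. As in the $n=5$ case, the element $s^2t$ should be a single cycle fixing exactly the three points $n+2$, $n+5$, $n+8$ (each special transposition combines with $s^2$, which shifts by two within each cycle, to fix its smaller endpoint, while the linking and pendant edges produce no further fixed points). Taking a suitable conjugate of $s^2t$---for instance by a power of $ts$, as was done via $(ts)^4$ in Proposition~\ref{prop:mgeqnequals5}---that shares only one of these fixed points shows the point stabiliser is transitive, so $G$ is primitive. The single cycle $s^2t$ then has $f=3\geq 3$ fixed points, so Jones' Theorem~\ref{thm:JonesAltSym} gives $G\geq A_k$; as $s$ and $t$ are both even we conclude $G=A_k$ with $k=\nu n+a=m+6$. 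Note that here, unlike the small-$m$ propositions, the long cycle has composite length $m+3$ in general, so we genuinely need Jones' version rather than Jordan's.

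Finally, chirality would follow from Lemma~\ref{lem:chiraldiag} (or Lemma~\ref{lem:chiraldiagsfixedpoint}) applied to the asymmetry created by the block of special transpositions in the second $n$-cycle: I would locate a point $\zeta$ moved by $t$ that is the unique fixed point of some $s^bt$, and exhibit an exponent $c$ witnessing the failure of the mirror condition, just as the point labelled $1$ and the words $sts^2$ versus $s^{-1}ts^{-2}$ did in the $n=5$ case. If necessary the argument splits according to whether $a=0$ or $a\geq 1$, since the pendant $(1,1')$ changes whether low-numbered points are fixed by $t$.

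The hard part will be the combinatorial bookkeeping in the first two steps: verifying that $st$ is exactly $2^3\cdot m$ and, especially, that $s^2t$ is genuinely a \emph{single} cycle on the remaining $k-3$ points for every $\nu\geq 2$ and every $0\leq a\leq n-1$. This requires tracking how the long cycle enters and leaves each of the $\nu$ copies through the linking transpositions $(in,in+1)$ and detours through the pendant points $j'$, and checking that these never close up prematurely into a shorter cycle. Once the single-cycle claim is secured, the primitivity, alternating and chirality conclusions follow by the now-standard arguments.
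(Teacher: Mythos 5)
Your proposal follows the paper's own proof essentially step for step: the same parity count showing $t$ is even, the same $2^3\cdot m$ cycle structure for $st$ giving the type, primitivity via the single-cycle element $s^2t$ with fixed points $n+2$, $n+5$, $n+8$ together with a conjugate sharing one fixed point (the paper uses $(s^2t)^{s^6}$ rather than a power of $ts$, an immaterial choice), and Jones' Theorem~\ref{thm:JonesAltSym} to conclude $G=A_k$. The only slight divergence is in the chirality step, where the paper argues directly that a hypothetical inverting automorphism would force the points $n+2$ and $n+10$ to be interchanged and notes this could only be consistent with a relabelling when $n=11$, $a=0$, $\nu=3$ --- a case excluded because $m$ is even --- whereas you propose to fit the argument into Lemma~\ref{lem:chiraldiag} or \ref{lem:chiraldiagsfixedpoint}; since the fixed points of $s^bt$ here come in threes rather than being unique, you would in practice end up with the paper's bespoke variant of that argument, as your own reference to the $n=5$ case anticipates.
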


\begin{proof}
When $\nu$ is even $a$ must be even so $t_a$ is even, and $t_\nu$ is odd. Meanwhile, if $\nu$ is odd then $t_a$ is odd, and $t_\nu$ is even. In any case the element $t$ is an even permutation, as is $s$. The order of $r^{-1}=st$ is $\nu n+a-6=m$ and so the map has type $\{ m,n \}$.

The group is clearly transitive by connectivity of the diagram, and primitivity is addressed by similar arguments to previously: $s^2t$ has three fixed points and a long cycle, as does its conjugate $(s^2t)^{s^6}$. With only one fixed point in common, and since $\langle s^2t, (s^2t)^{s^6} \rangle$ acts transitively on all other points, we have that the stabiliser of the point labelled $n+2$ is transitive and so the group is primitive. We use the same element $s^2t$ to apply Jones' Theorem \ref{thm:JonesAltSym} and conclude the group $G$ is alternating.

We may then apply the usual reasoning with reference to the permutation diagram to prove chirality. Under the action of a hypothetical inverting automorphism the unique points $n+2$ and $n+10$ would have to be interchanged. The only way in which this could be consistent with an orientation-reversing relabelling of the diagram would be if $n=11$ and $a=0$ and $\nu=3$ but this would contradict our ongoing assumption that $m$ is even.
\end{proof}

The above two constructions cover all types where even $m > n \geq 11$. For smaller values of $n$ just a single $n$-cycle is insufficient for the structures upon which we rely, namely three fixed points of $s^2t$. The following construction applies to the two remaining values of $n$, namely $n=7$ and $n=9$, for which we require at least two $n$-cycles in order to accommodate the three fixed points for $s^2t$, and a further $n$-cycle to allow for the congruence class of $m+6 \equiv a$ modulo $n$.

\begin{constr}\label{con:n=7or9} For $n \in \{ 7, 9\}$.
Let even $m + 6 = \nu n + a$ with $\nu \geq 3$ and $0 \leq a \leq n-1$.
Let $t_\nu =\prod_{i=1}^{\nu-1}(n_i, n_i +1)$. When $a \geq 1$ let $t_a =\prod_{j=1}^{a}(n\nu +1 - j, n\nu +1 -j ')$, and trivial otherwise.
Define 

$s := \prod_{i=0}^{\nu-1}(n i +1, n i +2, \dots , ni+n)$ and $t : = (13)(46)(n+2,n+4)t_\nu t_a$.

\end{constr}

\begin{prop}\label{prop:mgeqnequals7or9}
Let fixed $n \in \{ 7, 9\}$ with $m$, $s$ and $t$ being defined according to Construction \ref{con:n=7or9}. Then $\mathcal{M}(G:r,s)$ is a chiral map of type $\{ m,n \}$ and $G$ is an alternating group.
\end{prop}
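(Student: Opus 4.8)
The plan is to follow the same four-step template established in the preceding propositions for the case $m > n$ odd, now adapted to $n \in \{7,9\}$ with $\nu \geq 3$ cycles. First I would verify the type is correct: since $s$ is a product of $\nu$ disjoint $n$-cycles it is even, and the involution $t$ is even by a parity count on its transpositions (the three fixed transpositions, together with $t_\nu$ of length $\nu-1$ and $t_a$ of length $a$, must combine to an even number, which the congruence $m+6 = \nu n + a$ should force exactly as in Proposition \ref{prop:mgeqngeq11}). I would then compute the cycle structure of $st = r^{-1}$ and check its order equals $m = \nu n + a - 6$; by the remarks preceding Construction \ref{con:n=5}, the expected structure is $2^3.m$, the three $2$-cycles arising from the three ``fixed-point'' transpositions $(1,3),(4,6),(n+2,n+4)$ interacting with $s$. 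This establishes that $\mathcal{M}(G:r,s)$ has type $\{m,n\}$.

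Next I would establish primitivity. Following the pattern of Propositions \ref{prop:mgeqnequals5} and \ref{prop:mgeqngeq11}, the element $s^2 t$ should be a single long cycle with exactly three fixed points (namely the midpoints of the three special transpositions, e.g. the points sent to themselves under $s^2$ composed with $t$). Taking a suitable conjugate of $s^2 t$ — by analogy, something like $(s^2t)^{s^6}$ or a conjugate by a power of $ts$ — should give a second long cycle sharing only one fixed point with the first, so that the stabiliser of that common fixed point acts transitively, forcing $G$ primitive. With primitivity in hand, the same element $s^2 t$, being a single cycle with exactly three fixed points, lets me invoke Jones' Theorem \ref{thm:JonesAltSym} to conclude $G \geq A_k$; since $G$ is generated by even permutations, $G = A_k$ with $k = \nu n + a$ the degree.

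Finally, chirality follows from the permutation-diagram lemmas. Since $Aut(A_k) = S_k$ for $k \geq 7$, a hypothetical orientation-reversing automorphism must be realised by a relabelling $\pi \in S_k$, and I would exhibit a point $\zeta$ (a unique point fixed by $s^b t$ for suitable $b$ but not by $t$) whose forced image under $\pi$ violates the $t$-fixing condition, applying Lemma \ref{lem:chiraldiag} or Lemma \ref{lem:chiraldiagsfixedpoint} with explicit small parameters $b,c$. The main obstacle I anticipate is twofold: first, pinning down the exact cycle structure of $st$ and of $s^2 t$ across both residues $n=7$ and $n=9$ and all values of $a$ simultaneously, since the ``wrap-around'' blocks $\Pi$ (as in Proposition \ref{prop:mgeqnequals5}) make the computation case-dependent near the end of the last cycle; and second, verifying that no sporadic small case (small $\nu$ or particular $a$) accidentally admits a reflective symmetry — I would need to check by hand that the asymmetry witnessed by $\zeta$ genuinely survives for the boundary values of the parameters, much as the $m=10,\,i=-1$ exception had to be handled separately in Proposition \ref{prop:noddmgeq10}.
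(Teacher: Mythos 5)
The paper in fact omits its own proof of this proposition, saying only that it is ``possible using similar arguments to previous cases,'' and your plan is exactly the instantiation of that template: parity of $t$ forced by $m+6=\nu n+a$ with $n$ odd, cycle type $2^3.m$ for $st$ giving the type $\{m,n\}$, the element $s^2t$ a single long cycle with the three fixed points $1$, $4$, $n+2$ used both for primitivity (via a conjugate sharing one fixed point) and for Jones' Theorem \ref{thm:JonesAltSym}, and chirality via Lemma \ref{lem:chiraldiag} or Lemma \ref{lem:chiraldiagsfixedpoint} --- so you take essentially the same approach the paper intends. Your flagged caveats (the wrap-around block near the end of the last cycle, and the fact that the specific conjugator such as $s^6$ may need adjusting for $n=7$, $9$) are precisely the details the paper leaves unchecked, so attending to them would make your write-up strictly more complete than the paper's.
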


\begin{proof}
Omitted - possible using similar arguments to previous cases. 
\end{proof}
\bigskip

\section{When both $m$ and $n$ are even}\label{sec:neven}

When both $m$ and $n$ are even we may rely on duality and assume $m \leq n$.
The constructions presented in the previous section can then be modified for our purposes. An advantage of having both parameters even is that we may, without an unwanted consequential change in the order, allow transpositions to occur in the products $rt$ and/or $st$. We must still be careful to ensure that the orders are as we require, and that the generators are even permutations.

\begin{constr}\label{con:nevenmis4} For $m = 4$. See Figure \ref{fig:nevenmis4}.

Let $n = 4a+7+i$ where $i \in \{ -1, 1 \}$ and $a \geq 1$. Define $s := s_i$ where

\noindent  $s_{-1} := (1,2,\dots ,2a-1,2a,\alpha,\alpha',\beta,2a',2a-1', \dots ,2',1',\gamma, \delta, \gamma')(\delta',\epsilon)$ and

\noindent $s_{1} := (1,2, \dots ,2a-1,2a,\alpha,\alpha',\beta,\beta',\zeta,2a', 2a-1', \dots ,2',1',\gamma, \delta, \gamma')(\delta',\epsilon)$.

\noindent Define $t : = (\alpha,\alpha ')(\beta,\beta')(\gamma,\gamma')(\delta,\delta')\prod_{j=1}^{2a}(j,j')$.
\end{constr} 

\begin{figure}

\begin{centering}
\begin{tikzpicture}[scale=0.4]

\foreach \numpt in {0,1,2,3}{\draw [red, line width=2pt]  (90+45*\numpt:3)--(90+45+45*\numpt:3);}

\draw [red, line width=2pt]  (0,3)--(1.5,3);
\draw [red, line width=2pt]  (0,-3)--(1.5,-3);
\draw [red, line width=2pt]  (2.5,3)--(4,3);
\draw [red, line width=2pt]  (2.5,-3)--(4,-3);

\draw[blue, line width=2pt] plot [smooth] coordinates {(135:3)  (180:1.8)  (-135:3)};

\draw [blue, line width=2pt]  (180:3)--(180:4.2);
\draw [red, line width=2pt]  (180:5.4)--(180:4.2);
\fill  (180:4.2) circle(4pt);
\fill  (180:5.4) circle(4pt);

\draw [blue, line width=2pt]  (0,3)--(0,-3);
\draw [blue, line width=2pt]  (1,3)--(1,-3);

\foreach \numpt in {0,1,2,3,4}{\fill  (90+45*\numpt:3) circle(4pt);}

\draw [blue, line width=2pt]  (4,3)--(4,-3);
\draw [blue, line width=2pt]  (3,3)--(3,-3);

\draw [blue, line width=2pt, shift={(4,0)}]  (-45:3)--(-45:4.2);

\foreach \numpt in {-2,-1,0,1}{\draw [red, line width=2pt, shift={(4,0)}]  (45*\numpt:3)--(45+45*\numpt:3);}

\draw[blue, line width=2pt, shift={(4,0)}] plot [smooth] coordinates {(45:3)  (22.5:1.8)  (0:3)};

\centrearc[red, line width=2pt, ->](4,0)(45:-45:1)

\foreach \numpt in {-2,-1,0,1,2}{\fill[shift={(4,0)}]  (45*\numpt:3) circle(4pt);}
\fill[shift={(4,0)}]  (-45:4.2) circle(4pt);

\node[above] at (4,3) {$2a$};
\node[below] at (4,-3) {$2a'$};

\node[above right] at (6,2) {$\alpha$};
\node[right] at (7,0) {$\alpha '$};
\node[above left] at (6.2,-2.4) {$\beta $};
\node[below] at (7,-3) {$\beta '$};

\node[above] at (0,3) {$1$};
\node[below] at (0,-3) {$1'$};

\node[] at (135:3.4) {$\gamma '$};
\node[] at (-135:3.6) {$\gamma$};

\node[right] at (180:3) {$\delta$};
\node[above] at (180:4.1) {$\delta '$};
\node[above] at (180:5.4) {$\epsilon$};

\draw [red, dotted, line width=2pt]  (1.5,3)--(2.5,3);
\draw [red, dotted, line width=2pt]  (1.5,-3)--(2.5,-3);
\end{tikzpicture}
\hfill{}
\begin{tikzpicture}[scale=0.4]

\draw [red, dotted, line width=2pt]  (1.5,3)--(2.5,3);
\draw [red, dotted, line width=2pt]  (1.5,-3)--(2.5,-3);

\foreach \numpt in {0,1,2,3}{\draw [red, line width=2pt]  (90+45*\numpt:3)--(90+45+45*\numpt:3);}

\draw [red, line width=2pt]  (0,3)--(1.5,3);
\draw [red, line width=2pt]  (0,-3)--(1.5,-3);
\draw [red, line width=2pt]  (2.5,3)--(4,3);
\draw [red, line width=2pt]  (2.5,-3)--(4,-3);

\draw[blue, line width=2pt] plot [smooth] coordinates {(135:3)  (180:1.8)  (-135:3)};

\draw [blue, line width=2pt]  (180:3)--(180:4.2);
\draw [red, line width=2pt]  (180:5.4)--(180:4.2);
\fill  (180:4.2) circle(4pt);
\fill  (180:5.4) circle(4pt);

\draw [blue, line width=2pt]  (0,3)--(0,-3);
\draw [blue, line width=2pt]  (1,3)--(1,-3);

\foreach \numpt in {0,1,2,3,4}{\fill  (90+45*\numpt:3) circle(4pt);}

\draw [blue, line width=2pt]  (4,3)--(4,-3);
\draw [blue, line width=2pt]  (3,3)--(3,-3);

\foreach \numpt in {-3,-2,-1,0,1,2}{\draw [red, line width=2pt, shift={(4,0)}]  (30*\numpt:3)--(30+30*\numpt:3);}

\draw[blue, line width=2pt, shift={(4,0)}] plot [smooth] coordinates {(60:3)  (45:2)  (30:3)};
\draw[blue, line width=2pt, shift={(4,0)}] plot [smooth] coordinates {(0:3)  (-15:2)  (-30:3)};

\centrearc[red, line width=2pt, ->](4,0)(45:-45:1)

\foreach \numpt in {-3,-2,-1,0,1,2,3}{\fill[shift={(4,0)}]  (30*\numpt:3) circle(4pt);}

\node[above] at (4,3) {$2a$};
\node[below] at (4,-3) {$2a'$};

\node[above right] at (5.5,2.4) {$\alpha $};
\node[right] at (6.6,2) {$\alpha '$};
\node[right] at (7,0) {$\beta$};

\node[right] at (6.6,-2) {$\beta '$};
\node[below, right] at (5.5,-3) {$\zeta $};

\node[above] at (0,3) {$1$};
\node[below] at (0,-3) {$1'$};

\node[] at (135:3.4) {$\gamma '$};
\node[] at (-135:3.6) {$\gamma$};

\node[right] at (180:3) {$\delta$};
\node[above] at (180:4.1) {$\delta '$};
\node[above] at (180:5.4) {$\epsilon$};

\end{tikzpicture}
\caption{Construction \ref{con:nevenmis4} when $i=-1$ and $i=1$ respectively}\label{fig:nevenmis4}
\end{centering}
\end{figure}
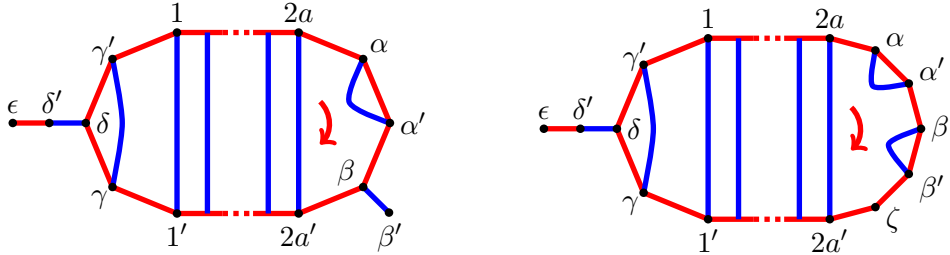

\begin{prop}\label{prop:nevenmis4}
Let $n$, $s$ and $t$ be defined according to Construction \ref{con:nevenmis4}. Then $\mathcal{M}(G:r,s)$ is a chiral map of type $\{ 4,n \}$ and $G = A_{k}$.
\end{prop}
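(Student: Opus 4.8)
The plan is to follow the template established by the preceding propositions for the odd-$n$ cases, since Construction \ref{con:nevenmis4} is the even-$n$ analogue of Construction \ref{con:noddmis4}. There are four things to verify: that $s$ and $t$ are even permutations of the correct orders, that $r=(st)^{-1}$ has order $4$, that $G=\langle s,t\rangle$ is the full alternating group $A_k$, and that the resulting map is chiral. The first two are the routine bookkeeping steps: one reads off from the cycle definitions that $t$ is a product of $2a+4$ transpositions (hence even, order $2$) and that $s$ is a product of an $n$-cycle and one transposition, so one must check that the total parity makes $s$ even and that the cycle lengths are correct for the claimed value $n=4a+7+i$. The degree $k$ of the permutation representation should be extracted by counting the points appearing in the diagram.

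The order of $r$ is handled by computing the cycle structure of $st$ directly from the definitions, exactly as in the proof of Proposition \ref{prop:noddmis4}. I would split into the cases $i=-1$ and $i=1$ and show that in each case $st$ factors as a product of disjoint $2$-cycles paired off as $(j,\,(j+1)')$ together with one or more short cycles of length $4$ (and possibly a residual transposition coming from the $(\delta',\epsilon)$ factor of $s$), so that $st$ has order $4$ and hence the map has type $\{4,n\}$. This is pure calculation with the given permutations and presents no conceptual difficulty.

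For the group-theoretic step I would argue primitivity first, then apply one of the Jordan-type theorems. Primitivity should follow the now-familiar pattern: if $s$ (or a suitable conjugate) has few enough fixed points and acts transitively on the rest, one shows the point stabiliser is transitive on the remaining points. Because $s$ here is \emph{not} a single cycle (it has the extra transposition $(\delta',\epsilon)$), I expect to need either $s^2$ or a conjugate such as $s^{tst}$ to pin down that the common fixed points of $s$ and its conjugate reduce to a single point, forcing primitivity. To conclude $G\geq A_k$ I would exhibit a power of some element $s^b t$ that is a single cycle of prime length fixing at least three points — following the proofs above, a candidate is $(s^2t)^2$ or an analogous expression producing a short prime-length cycle — and invoke Jordan's Theorem \ref{thm:JordanAltSym}; since $G$ is generated by even permutations, $G=A_k$ exactly.

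The main obstacle, as in the companion propositions, is locating the correct short cycle to feed into the chirality lemmas and the Jordan theorem, since these depend on the fine cycle structure of products like $s^b t$ and must be computed separately for $i=-1$ and $i=1$. For chirality I would apply Lemma \ref{lem:chiraldiag} (or Lemma \ref{lem:chiraldiagsfixedpoint} if the first does not apply), identifying the unique point $\zeta$ fixed by $s^b t$ but not by $t$ and an integer $c$ witnessing the asymmetry; the diagrams in Figure \ref{fig:nevenmis4} are designed precisely so that such a $\zeta$ exists and so that reflecting across it fails the second condition of the lemma. Once the right $b$ and $c$ are read off the diagram, chirality is immediate from the lemma, whose hypotheses are satisfied because $\operatorname{Aut}(A_k)=S_k$ for $k\geq 7$.
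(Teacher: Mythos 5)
Your proposal is correct and follows essentially the same route as the paper: type verification by direct computation of $st$, primitivity via a point stabiliser argument (the paper uses the fact that $\epsilon$ is the unique common fixed point of $s^2$ and $t$, with $t$ fusing the three other orbits of $s^2$), Jordan's Theorem \ref{thm:JordanAltSym} applied to a prime-length cycle, evenness of the generators to pin down $G=A_k$, and chirality via Lemmas \ref{lem:chiraldiag} and \ref{lem:chiraldiagsfixedpoint}. The only content you defer is exactly what the paper supplies: the witnesses $(s^4t)^6$ (a $7$-cycle) for $i=-1$ and $(s^3t)^{12}$ (a $5$-cycle) for $i=1$, and the parameters $b=1$, $c=2$ for Lemma \ref{lem:chiraldiagsfixedpoint} respectively $b=4$, $c=-2a$ for Lemma \ref{lem:chiraldiag}.
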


\begin{proof}
The type of each map is easy to check, and is $\{ 4, n \}$ as expected.
There is precisely one point (labelled $\epsilon$) which is fixed by $s^2$ and $t$. These two permutations act transitively on all other points ($t$ links the three other orbits of $s^2$) and so the group $G$ is primitive.
When $i=-1$, $(s^4t)^6$ is a $7$-cycle, and when $i=1$ the permutation $(s^3t)^{12}$ is a $5$-cycle,
so by Jordan's Theorem \ref{thm:JordanAltSym} in each case the group $G = \langle s,t \rangle$ is alternating.
The maps are chiral by inspection of the diagrams in Figure \ref{fig:nevenmis4} using respectively Lemma \ref{lem:chiraldiagsfixedpoint} with $b=1$ and $c=2$ and Lemma \ref{lem:chiraldiag} with $b=4$ and $c=-2a$.
\end{proof}

Notice that this construction may be extended to the cases when $a=0$, that is when $n \in \{ 6, 8 \}$, by omitting all the numbered points and their primes. This gives $t=(\alpha,\alpha ')(\beta,\beta')(\gamma, \gamma')(\delta, \delta')$ while $s_{-1}=(\alpha, \alpha', \beta, \gamma, \delta, \gamma')(\delta',\epsilon)$ and $s_1 = (\alpha, \alpha', \beta,\beta', \zeta, \gamma, \delta, \gamma')(\delta',\epsilon)$. Respectively these yield a map of type $\{ 4, 6 \}$ with simple underlying group $G = \langle s,t \rangle = A_9$, and a map of type $\{ 4, 8 \}$ with $G = A_{10}$. Chirality in each case may be proved in the same way as in the proof of Proposition \ref{prop:nevenmis4}.

Remembering that the type $\{4,4 \}$ is toroidal, the work so far proves there is a simple (alternating) group supporting a chiral map for any given hyperbolic type $\{ 4,n \}$ and, so also by duality for any hyperbolic type $\{n,4 \}$.

A minor modification of the diagram for $m=4$ yields the next construction, valid for when $m=6$.

\begin{constr}\label{con:nevenmis6} For $m = 6$. See Figure \ref{fig:nevenmis6}.

Let $n = 4a+7+i$ where $i \in \{ -1, 1 \}$ and $a \geq 1$. Define $s := s_i$ where

\noindent $s_{-1} = (1,2, \dots ,2a-1,2a,\alpha,\alpha',\beta,2a',2a-1', \dots ,2',1',\delta,\epsilon, \delta')(\beta', \gamma)$,

\noindent $s_{1} = (1,2, \dots , 2a-1,2a,\alpha,\alpha',\beta,\beta',\gamma,2a', 2a-1', \dots ,2',1',\delta,\epsilon, \delta')(\gamma', \zeta)$.

\noindent Define $t : = (\alpha,\alpha ')(\beta,\beta')(\gamma,\gamma')(\delta,\delta')\prod_{j=1}^{2a}(j,j')$.
\end{constr}

\begin{figure}

\begin{centering}
\begin{tikzpicture}[scale=0.4]

\foreach \numpt in {0,1,2,3}{\draw [red, line width=2pt]  (90+45*\numpt:3)--(90+45+45*\numpt:3);}

\draw [red, line width=2pt]  (0,3)--(1.5,3);
\draw [red, line width=2pt]  (0,-3)--(1.5,-3);
\draw [red, line width=2pt]  (2.5,3)--(4,3);
\draw [red, line width=2pt]  (2.5,-3)--(4,-3);

\draw[blue, line width=2pt] plot [smooth] coordinates {(135:3)  (180:1.8)  (-135:3)};

\draw [blue, line width=2pt]  (0,3)--(0,-3);
\draw [blue, line width=2pt]  (1,3)--(1,-3);

\foreach \numpt in {0,1,2,3,4}{\fill  (90+45*\numpt:3) circle(4pt);}

\draw [blue, line width=2pt]  (4,3)--(4,-3);
\draw [blue, line width=2pt]  (3,3)--(3,-3);

\foreach \numpt in {-2,-1,0,1}{\draw [red, line width=2pt, shift={(4,0)}]  (45*\numpt:3)--(45+45*\numpt:3);}

\draw[blue, line width=2pt, shift={(4,0)}] plot [smooth] coordinates {(45:3)  (22.5:1.8)  (0:3)};

\draw [blue, line width=2pt, shift={(4,0)}]  (-45:3)--(-45:4.2);
\draw [red, line width=2pt, shift={(4,0)}]  (-45:5.4)--(-45:4.2);
\draw [blue, line width=2pt, shift={(4,0)}]  (-45:5.4)--(-45:6.6);

\centrearc[red, line width=2pt, ->](4,0)(45:-45:1)

\foreach \numpt in {-2,-1,0,1,2}{\fill[shift={(4,0)}]  (45*\numpt:3) circle(4pt);}
\fill[shift={(4,0)}]  (-45:4.2) circle(4pt);
\fill[shift={(4,0)}]  (-45:5.4) circle(4pt);
\fill[shift={(4,0)}]  (-45:6.6) circle(4pt);

\node[above] at (4,3) {$2a$};
\node[below] at (4,-3) {$2a '$};

\node[above right] at (6,2) {$\alpha$};
\node[right] at (7,0) {$\alpha '$};
\node[above left] at (6.2,-2.4) {$\beta $};

\node[right] at (7,-2.7) {$\beta '$};
\node[right] at (7.75,-3.6) {$\gamma$};
\node[right] at (8.5,-4.5) {$\gamma '$};

\node[above] at (0,3) {$1$};
\node[below] at (0,-3) {$1'$};

\node[] at (135:3.4) {$\delta '$};
\node[] at (180:3.5) {$\epsilon$};
\node[] at (-135:3.5) {$\delta$};

\draw [red, dotted, line width=2pt]  (1.5,3)--(2.5,3);
\draw [red, dotted, line width=2pt]  (1.5,-3)--(2.5,-3);
\end{tikzpicture}
\hfill{}
\begin{tikzpicture}[scale=0.4]

\draw [red, dotted, line width=2pt]  (1.5,3)--(2.5,3);
\draw [red, dotted, line width=2pt]  (1.5,-3)--(2.5,-3);

\foreach \numpt in {0,1,2,3}{\draw [red, line width=2pt]  (90+45*\numpt:3)--(90+45+45*\numpt:3);}

\draw [red, line width=2pt]  (0,3)--(1.5,3);
\draw [red, line width=2pt]  (0,-3)--(1.5,-3);
\draw [red, line width=2pt]  (2.5,3)--(4,3);
\draw [red, line width=2pt]  (2.5,-3)--(4,-3);

\draw[blue, line width=2pt] plot [smooth] coordinates {(135:3)  (180:1.8)  (-135:3)};

\draw [blue, line width=2pt]  (0,3)--(0,-3);
\draw [blue, line width=2pt]  (1,3)--(1,-3);

\foreach \numpt in {0,1,2,3,4}{\fill  (90+45*\numpt:3) circle(4pt);}

\draw [blue, line width=2pt]  (4,3)--(4,-3);
\draw [blue, line width=2pt]  (3,3)--(3,-3);

\foreach \numpt in {-3,-2,-1,0,1,2}{\draw [red, line width=2pt, shift={(4,0)}]  (30*\numpt:3)--(30+30*\numpt:3);}

\draw [blue, line width=2pt, shift={(4,0)}]  (-60:3)--(-60:4.2);
\draw [red, line width=2pt, shift={(4,0)}]  (-60:5.4)--(-60:4.2);
\fill[shift={(4,0)}] (-60:4.2) circle(4pt);
\fill[shift={(4,0)}] (-60:5.4) circle(4pt);

\draw[blue, line width=2pt, shift={(4,0)}] plot [smooth] coordinates {(60:3)  (45:2)  (30:3)};
\draw[blue, line width=2pt, shift={(4,0)}] plot [smooth] coordinates {(0:3)  (-15:2)  (-30:3)};

\centrearc[red, line width=2pt, ->](4,0)(45:-45:1)

\foreach \numpt in {-3,-2,-1,0,1,2,3}{\fill[shift={(4,0)}]  (30*\numpt:3) circle(4pt);}

\node[above] at (0,3) {$1$};
\node[below] at (0,-3) {$1'$};

\node[] at (135:3.4) {$\delta '$};
\node[] at (180:3.5) {$\epsilon$};
\node[] at (-135:3.5) {$\delta$};

\node[above] at (4,3) {$2a$};
\node[below] at (4,-3) {$2a'$};

\node[above right] at (5.5,2.4) {$\alpha $};
\node[right] at (6.6,2) {$\alpha '$};
\node[right] at (7,0) {$\beta$};

\node[right] at (6.6,-2) {$\beta '$};
\node[above] at (5.3,-2.8) {$\gamma $};

\node[right] at (6,-3.5) {$\gamma '$};
\node[right] at (6.6,-4.7) {$\zeta $};

\end{tikzpicture}
\caption{Construction \ref{con:nevenmis6} when $i=-1$ and $i=1$ respectively}\label{fig:nevenmis6}
\end{centering}
\end{figure}
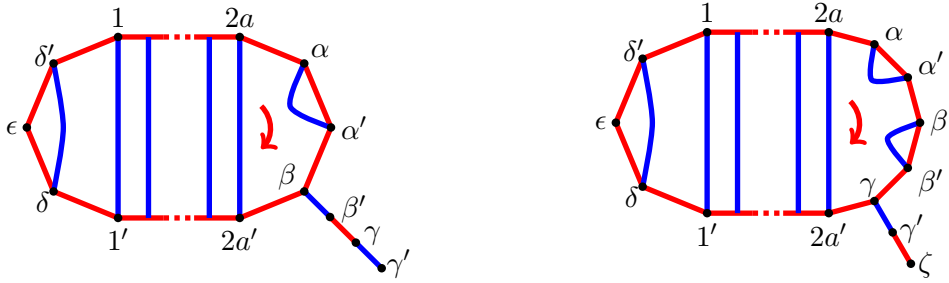

\begin{prop}\label{prop:nevenmis6}
Let $n$, $s$ and $t$ be defined as in Construction \ref{con:nevenmis6}. Then $\mathcal{M}(G:r,s)$ is a chiral map of type $\{ 6,n \}$ and $G = A_k$.
\end{prop}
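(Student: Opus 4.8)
The plan is to follow the template of the proof of Proposition~\ref{prop:nevenmis4}, adjusting each step to Construction~\ref{con:nevenmis6}. The routine opening is to note that $t$ is a product of $2a+4$ transpositions, hence an even involution, and that each $s_i$ is a single $n$-cycle together with one extra transposition, hence even of order $n$ (the transposition leaving the order unchanged because $n$ is even). I would then compute $r^{-1}=st$ from the definitions and read off that it is a $6$-cycle together with a few transpositions, so $r$ has order $6$ and the map $\mathcal{M}(G:r,s)$ has type $\{6,n\}$. The degrees are $k=n+3=4a+9$ when $i=-1$ and $k=n+2=4a+10$ when $i=1$.

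For primitivity the two parities behave differently, which I would treat separately. When $i=1$ the point $\zeta$ lies in the short cycle $(\gamma',\zeta)$ of $s_1$ and is the unique point fixed by both $s_1^2$ and $t$, so the argument of Proposition~\ref{prop:nevenmis4} applies verbatim: $s_1^2$ and $t$ fix $\zeta$ and act transitively on the remaining points, giving $2$-transitivity and hence primitivity. When $i=-1$ this shortcut fails, because the unique fixed point $\epsilon$ of $t$ lies inside the long cycle of $s_{-1}$ and so is not fixed by any nontrivial power of $s$. Here I would instead use the conjugation method of Propositions~\ref{prop:noddmis6} and \ref{prop:mgeqnequals5}: exhibit an element of $\langle s,t\rangle$ (a product such as $s^2t$) that is a single long cycle with exactly three fixed points, and a conjugate of it sharing precisely one of those fixed points, whence the stabiliser of that point is transitive and $G$ is primitive.

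With primitivity established I would obtain the alternating conclusion through Jordan's Theorem~\ref{thm:JordanAltSym}, locating a power $(s^{j}t)^{\ell}$ that reduces to a single cycle of prime length fixing at least three points, as the $7$-cycle and $5$-cycle do for Proposition~\ref{prop:nevenmis4}; since $s$ and $t$ are even this upgrades $G\geq A_k$ to $G=A_k$ without recourse to the classification. Chirality would then follow by inspecting Figure~\ref{fig:nevenmis6}: for a suitable nonzero $b$ I would pinpoint the unique point $\zeta$ fixed by $s^{b}t$ but not by $t$, and produce an integer $c$ realising the asymmetry demanded by Lemma~\ref{lem:chiraldiag} (or Lemma~\ref{lem:chiraldiagsfixedpoint}), separately for $i=-1$ and $i=1$.

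The main obstacle is twofold and concentrated in the $i=-1$ case. First, the primitivity argument must be reworked there, since one can no longer simply point to a common fixed point of $s^2$ and $t$; pinning down an element--conjugate pair with a single shared fixed point needs genuine care because $s_{-1}$ carries a nontrivial short cycle. Second, one must still exhibit, uniformly in $a\geq1$ and for both values of $i$, the explicit prime-length cycle feeding Jordan's theorem and verify the two bullet conditions of the chirality lemma in every case.
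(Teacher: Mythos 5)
Your outline coincides with the paper's proof in every structural respect: type and parity checks, primitivity, Jordan's Theorem \ref{thm:JordanAltSym} fed by an explicit prime-length cycle, then chirality via the diagram lemmas. The deferred details you flagged as routine are indeed realised as you predicted: $(s^4t)^2$ is a $7$-cycle when $i=-1$ and $(s^2t)^6$ is a $5$-cycle when $i=1$, and the paper settles chirality with Lemma \ref{lem:chiraldiagsfixedpoint} using $b=2$ and $c=-2a-1$. Your degree counts are correct, and your $i=1$ primitivity argument is not only correct but is the accurate version of what the paper prints: $\zeta$ is the unique common fixed point of $s^2$ and $t$, the transposition $(\alpha,\alpha')$ of $t$ fuses the two $s^2$-halves of the long cycle and $(\gamma,\gamma')$ attaches $\gamma'$, so $Stab_G(\zeta)$ is transitive on the remaining points. (The paper says ``$r^2$ and $t$ fix the point $\zeta$'', but with $t=rs$ one has $r=ts^{-1}$, which sends $\zeta$ to $\gamma'$ and $r^2$ sends $\zeta$ to $\beta'$; the element that actually works is your $s^2$.)

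The genuine gap is exactly where you located it, and the repair you sketch would fail. For $i=-1$ there is no element of the form $s^jt$ that is a single long cycle with three fixed points, so the conjugate-pair method of Propositions \ref{prop:noddmis6} and \ref{prop:mgeqnequals5} has no raw material here: already for $a=1$ one computes $s^2t=(1,\alpha',2,\alpha,\beta',\beta)(1',\epsilon)(2',\delta')(\gamma,\gamma')$ with the single fixed point $\delta$, and for general $a$ the element $s^2t$ decomposes as one $6$-cycle, a collection of transpositions, and the fixed point $\delta$ --- never a long cycle. What does work, and is in spirit the paper's argument, is a stabiliser argument at $\gamma'$, which is the \emph{unique} fixed point of $s_{-1}$ (its long cycle and $(\beta',\gamma)$ use up $4a+8$ of the $4a+9$ points). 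Since $s^4$ fixes both $\gamma$ and $\gamma'$, the permutation $s^4t$ contains the transposition $(\gamma,\gamma')$, so the $7$-cycle $(s^4t)^2$ that you need anyway for Jordan's theorem also fixes $\gamma'$; moreover it sends $\alpha$ to $\beta'$ (check: $\alpha\, s^4t=1$ and $1\, s^4t=\beta'$ when $a=1$, while $\alpha\, s^4t=2a-1$ and $(2a-1)\,s^4t=\beta'$ when $a\geq 2$), thereby fusing the $s$-orbit $\{\beta',\gamma\}$ with the long $s$-orbit. Hence $\langle s,(s^4t)^2\rangle\leq Stab_G(\gamma')$ is transitive on the other $k-1$ points, $G$ is $2$-transitive and so primitive, and the rest of your plan goes through. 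So the proposal is salvageable, but only after replacing the proposed long-cycle-with-three-fixed-points mechanism by this two-element stabiliser argument.
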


\begin{proof}
The map has the correct type $\{ 6, n \}$ since $s$ is an $n$-cycle and $r = ts^{-1}$ is the product of transpositions with a $6$-cycle.
The group $G = \langle s, t \rangle$ is primitive: when $i=-1$ the point $\gamma '$ is fixed by $r$ and $tr^2t$ which together act transitively on all the other points; when $i=1$ the elements $r^2$ and $t$ fix the point $\zeta$ and they act transitively on the other points.
When $i=-1$, $(s^4t)^2$ is a $7$-cycle,
when $i=1$ the permutation $(s^2t)^6$ is a $5$-cycle and the generators are even so, by Jordan's Theorem \ref{thm:JordanAltSym}, the group $G$ is alternating of the same degree as the diagram.
The map is chiral by Lemma \ref{lem:chiraldiagsfixedpoint} using $b=2$ and $c=-2a-1$, see Figure \ref{fig:nevenmis6}.
\end{proof}

Notice that this construction can also be extended to the case when $a=0$ and $i=1$ to yield a map of type $\{ 6,8 \}$ with generators $s= (\alpha,\alpha',\beta, \beta', \gamma, \delta,\epsilon,\delta')(\gamma', \zeta)$ and $t=(\alpha,\alpha ')(\beta,\beta')(\gamma,\gamma')(\delta,\delta')$ and alternating automorphism group $G = \langle s,t \rangle = A_{10}$. Chirality may then be proved by Lemma \ref{lem:chiraldiagsfixedpoint} using $b=2$ and $c=-1$.

The remaining cases are covered by the following two constructions.

\begin{constr}\label{con:nevenmis8} For $m = 8$. See Figure \ref{fig:nevenmis8}.

Let $n = 4a+7+i \geq 8$ where $i \in \{ -1 , 1 \}$ and $a \geq 0$.
Define $s := s_i$ where
 $s_{-1} := (1,2, \dots ,2a-1,2a,\alpha,\gamma,\epsilon,2a',2a-1', \dots ,2',1',\delta,\zeta, \delta')(\alpha', \beta)$ while 
 
 \noindent $s_{1} = (\alpha,\beta,\gamma,\gamma',\epsilon,\delta,\zeta, \delta')(\alpha',\eta )$ when $a=0$, and when $a \geq 1$

\noindent $s_{1} = (1,2, \dots 2a-1,2a,\alpha,\beta,\gamma,\gamma',\epsilon,2a',2a-1', \dots ,2',1',\delta,\zeta, \delta')(\alpha',\eta )$.
 
 Define $t := t_a(\alpha,\alpha ')(\beta,\beta')(\gamma,\gamma')(\delta,\delta')$ 
where, $t_a$ is trivial when $a=0$ and otherwise $t_a := \prod_{j=1}^{2a}(j,j')$.
\end{constr}

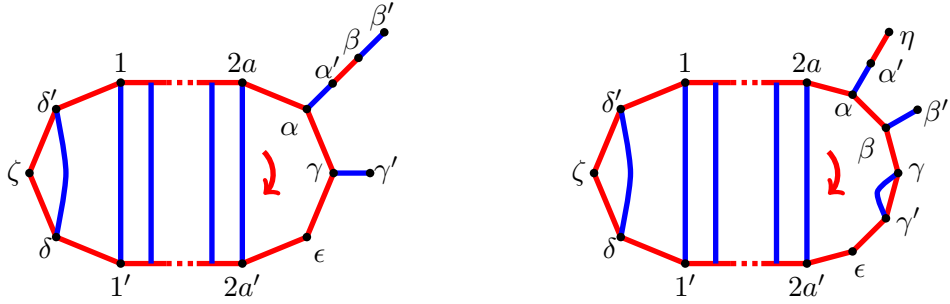
\begin{figure}

\begin{centering}
\begin{tikzpicture}[scale=0.4]

\foreach \numpt in {0,1,2,3}{\draw [red, line width=2pt]  (90+45*\numpt:3)--(90+45+45*\numpt:3);}

\draw [red, line width=2pt]  (0,3)--(1.5,3);
\draw [red, line width=2pt]  (0,-3)--(1.5,-3);
\draw [red, line width=2pt]  (2.5,3)--(4,3);
\draw [red, line width=2pt]  (2.5,-3)--(4,-3);

\draw[blue, line width=2pt] plot [smooth] coordinates {(135:3)  (180:1.8)  (-135:3)};

\draw [blue, line width=2pt]  (0,3)--(0,-3);
\draw [blue, line width=2pt]  (1,3)--(1,-3);

\foreach \numpt in {0,1,2,3,4}{\fill  (90+45*\numpt:3) circle(4pt);}

\draw [blue, line width=2pt]  (4,3)--(4,-3);
\draw [blue, line width=2pt]  (3,3)--(3,-3);
\draw [blue, line width=2pt]  (7,0)--(8.2,0);

\foreach \numpt in {-2,-1,0,1}{\draw [red, line width=2pt, shift={(4,0)}]  (45*\numpt:3)--(45+45*\numpt:3);}

\draw [blue, line width=2pt, shift={(4,0)}]  (45:3)--(45:4.2);
\draw [red, line width=2pt, shift={(4,0)}]  (45:5.4)--(45:4.2);
\draw [blue, line width=2pt, shift={(4,0)}]  (45:5.4)--(45:6.6);

\centrearc[red, line width=2pt, ->](4,0)(45:-45:1)

\foreach \numpt in {-2,-1,0,1,2}{\fill[shift={(4,0)}]  (45*\numpt:3) circle(4pt);}
\fill[shift={(4,0)}]  (45:4.2) circle(4pt);
\fill[shift={(4,0)}]  (45:5.4) circle(4pt);
\fill[shift={(4,0)}]  (45:6.6) circle(4pt);
\fill[shift={(4,0)}]  (4.2,0) circle(4pt);

\node[above] at (0,3) {$1$};
\node[below] at (0,-3) {$1'$};

\node[] at (135:3.4) {$\delta '$};
\node[] at (180:3.5) {$\zeta$};
\node[] at (-135:3.5) {$\delta$};

\node[above] at (4,3) {$2a$};
\node[below] at (4,-3) {$2a'$};

\node[below left] at (6.2,2) {$\alpha$};
\node[above] at (6.8,2.8) {$\alpha '$};
\node[above] at (7.6,3.6) {$\beta $};
\node[above] at (8.6,4.4) {$\beta '$};
\node[left] at (7,0) {$\gamma $};
\node[right] at (8,0.1) {$\gamma '$};
\node[below right] at (6,-2.2) {$\epsilon$};

\draw [red, dotted, line width=2pt]  (1.5,3)--(2.5,3);
\draw [red, dotted, line width=2pt]  (1.5,-3)--(2.5,-3);
\end{tikzpicture}
\hfill{}
\begin{tikzpicture}[scale=0.4]

\draw [red, dotted, line width=2pt]  (1.5,3)--(2.5,3);
\draw [red, dotted, line width=2pt]  (1.5,-3)--(2.5,-3);

\foreach \numpt in {0,1,2,3}{\draw [red, line width=2pt]  (90+45*\numpt:3)--(90+45+45*\numpt:3);}

\draw [red, line width=2pt]  (0,3)--(1.5,3);
\draw [red, line width=2pt]  (0,-3)--(1.5,-3);
\draw [red, line width=2pt]  (2.5,3)--(4,3);
\draw [red, line width=2pt]  (2.5,-3)--(4,-3);

\draw[blue, line width=2pt] plot [smooth] coordinates {(135:3)  (180:1.8)  (-135:3)};

\draw [blue, line width=2pt]  (0,3)--(0,-3);
\draw [blue, line width=2pt]  (1,3)--(1,-3);

\foreach \numpt in {0,1,2,3,4}{\fill  (90+45*\numpt:3) circle(4pt);}

\draw [blue, line width=2pt]  (4,3)--(4,-3);
\draw [blue, line width=2pt]  (3,3)--(3,-3);

\foreach \numpt in {-3,-2,-1,0,1,2}{\draw [red, line width=2pt, shift={(4,0)}]  (30*\numpt:3)--(30+30*\numpt:3);}

\draw [blue, line width=2pt, shift={(4,0)}]  (60:3)--(60:4.2);
\draw [blue, line width=2pt, shift={(4,0)}]  (30:3)--(30:4.2);
\draw [red, line width=2pt, shift={(4,0)}]  (60:5.4)--(60:4.2);
\fill[shift={(4,0)}] (60:4.2) circle(4pt);
\fill[shift={(4,0)}] (30:4.2) circle(4pt);
\fill[shift={(4,0)}] (60:5.4) circle(4pt);

\draw [blue, line width=2pt, shift={(4,0)}] plot [smooth] coordinates {(0:3)  (-15:2.4)  (-30:3)};

\centrearc[red, line width=2pt, ->](4,0)(45:-45:1)

\foreach \numpt in {-3,-2,-1,0,1,2,3}{\fill[shift={(4,0)}]  (30*\numpt:3) circle(4pt);}

\node[above] at (0,3) {$1$};
\node[below] at (0,-3) {$1'$};

\node[] at (135:3.4) {$\delta '$};
\node[] at (180:3.5) {$\zeta$};
\node[] at (-135:3.5) {$\delta$};

\node[above] at (4,3) {$2a$};
\node[below] at (4,-3) {$2a'$};

\node[] at (5.2,2.1) {$\alpha $};
\node[right] at (6,3.4) {$\alpha '$};
\node[right] at (6.7,4.4) {$\eta $};
\node[below left] at (6.6,1.5) {$\beta$};
\node[right] at (7.5,2) {$\beta '$};

\node[right] at (7,0) {$\gamma$};
\node[right] at (6.6,-1.6) {$\gamma '$};
\node[below right] at (5.2,-2.7) {$\epsilon$};
\end{tikzpicture}
\caption{Construction \ref{con:nevenmis8} when $i=-1$ and $i=1$ respectively}\label{fig:nevenmis8}
\end{centering}
\end{figure}

\begin{prop}\label{prop:nevenmis8}
Let $n$, $s$ and $t$ be defined according to Construction \ref{con:nevenmis8}. Then $\mathcal{M}(G:r,s)$ is a chiral map of type $\{ 8,n \}$ and $G = A_k$.
\end{prop}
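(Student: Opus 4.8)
The plan is to follow the four-step template of Propositions \ref{prop:nevenmis4} and \ref{prop:nevenmis6}. First verify that $s$ and $t$ are even permutations of the correct orders and that $st$ has order $8$, giving type $\{8,n\}$; then establish primitivity of $G$; then produce a prime-length cycle in $G$ fixing at least three points and invoke Jordan's Theorem \ref{thm:JordanAltSym} to force $G = A_k$; and finally deduce chirality from one of Lemmas \ref{lem:chiraldiag} or \ref{lem:chiraldiagsfixedpoint}.

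For the preliminaries, in each case $s$ is a single $n$-cycle times one transposition; since $n = 4a+7+i$ is even the $n$-cycle is odd, so $s$ is even of order $n$, while $t$ is a product of $2a+4$ transpositions and is therefore an even involution. It then remains to compute the cycle type of $r^{-1} = st$: I expect an $8$-cycle together with a small (odd) number of transpositions and some fixed points, which makes $r$ an even permutation of order $8$ and so confirms the type $\{8,n\}$.

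For primitivity I would locate a point fixed by two readily identified elements — for example a power of $r$ together with a conjugate of $t$, as $\gamma'$ is used for $i=-1$ and $\zeta$ for $i=1$ in Proposition \ref{prop:nevenmis6} — and show that these act transitively on the remaining points, so that the point-stabiliser is transitive; connectivity of the diagram already gives transitivity of $G$, and the two facts together give primitivity. To identify $G$ I would then exhibit, separately for $i=-1$ and $i=1$, a power $(s^j t)^e$ that reduces to a single cycle of prime length fixing at least three points, exactly as $5$- and $7$-cycles were extracted in the earlier even-$m$ propositions; Jordan's Theorem \ref{thm:JordanAltSym} together with the evenness of the generators then yields $G = A_k$, where $k$ is the number of points in the diagram.

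Chirality follows from the relevant chirality lemma, using that $Aut(A_k) \cong S_k$ for $k \geq 7$: I would identify the unique point fixed by some $s^b t$ (with $b \neq 0$) but not by $t$, together with an integer $c$ witnessing the asymmetry condition, reading the values of $b$ and $c$ off Figure \ref{fig:nevenmis8}. The main obstacle I anticipate is not any single deep step but uniform bookkeeping across the cases: the construction bundles both signs $i = \pm 1$ together with the degenerate case $a=0$ (where $t_a$ and all numbered points vanish, the short cycle of $s$ becomes $(\alpha',\eta)$, and only $i=1$ occurs, giving type $\{8,8\}$), and the cycle structure of $st$, the prime cycle feeding Jordan's theorem, and the pair $(b,c)$ must each be checked to behave consistently — with the smallest instances verified by hand should a uniform argument run short of the required three fixed points.
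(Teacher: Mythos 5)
Your outline matches the paper's proof in three of its four steps, and your anticipated trouble spot is exactly the one the paper hits: for $i=1$, $a=0$ (type $\{8,8\}$, degree $k=11$) the prime-cycle witness runs short of fixed points, and the paper deals with it precisely as you propose, by checking directly that $G=A_{11}$. The paper's witnesses, for the record, are $(s^2t)^4$, a $3$-cycle, when $i=-1$, and $(s^3t)^2$, an $11$-cycle, when $i=1$ (the latter fixing $4a$ points, whence the $n=8$ exception), with chirality from Lemma \ref{lem:chiraldiag} using $b=2$ and $c=-2a-1$; your parity and cycle-type bookkeeping for $s$, $t$ and $st$ (an $8$-cycle with three transpositions) is correct. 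The genuine divergence is the primitivity step. You plan to transplant the stabiliser argument of Propositions \ref{prop:nevenmis4} and \ref{prop:nevenmis6} --- find a point fixed by two readily named elements acting transitively on the rest --- but for this construction that trick does not transfer cleanly: the fixed points of $t$ ($\epsilon$, $\zeta$, and $\eta$ when $i=1$) are all moved by every relevant power of $s$, and the fixed points of $s$ ($\beta'$, and $\gamma'$ when $i=-1$) are moved by $t$, so no point is visibly fixed by a convenient pair of generator words. This is presumably why the paper abandons the stabiliser template here and instead argues primitivity by contradiction with a hypothetical non-trivial block system: it shows the two $s$-fixed (respectively, the unique $s$-fixed) points would be forced into a common block, bounds the block size, and then pushes the block around by $t$ and $s^2$ (respectively $tst$) until a block meets the long $s$-cycle, which forces the block to be everything. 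Your approach buys uniformity with the neighbouring propositions, but as stated it would stall at this step; the block-system analysis is the missing ingredient you would need to supply, and it is a different kind of argument from anything in your plan, not merely deferred bookkeeping.
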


\begin{proof}
The map has the correct type - this is easy to check.

For each case we suppose, for a contradiction, the group is imprimitive, and that $\mathcal{B}$ is a non-trivial proper block system.
In the case when $i=-1$ there are precisely two points which are fixed by $s$. Note that neither of the two points $\gamma '$ and $\beta'$ can be in the same block as any point on the long $s$-cycle, since that block would then necessarily include all the points, a contradiction. Also, neither of the points $\alpha'$ or $\beta$ can be in the same block as $\gamma '$, since then both would be, and the block containing $\beta '$ would have to contain $\alpha$ which is on the $s$-cycle and leads to the same contradiction. Therefore $\gamma '$ and $\beta'$ must be in the same block $B_1 \in \mathcal{B}$ and moreover the maximum size of a non-trivial block is two. Applying $t$ to the elements in $B_1$ implies that the points $\gamma$ and $\beta$ are in $B_2 \in \mathcal{B}$, and applying $s^2$ (which fixes $\beta$) leads us to the contradiction that also $2a' \in B_2$.
In the case where $i=1$ there is a unique point, $\beta' \in B_1$, which is fixed by $s$. The only plausible candidates for other elements in $B_1 \in \mathcal{B}$ are one of, and hence both, $\alpha '$ and $\eta$. Application of $tst$ then implies $B_1$ contains $\gamma '$, an element on the long $s$-cycle, and so $B_1$ contains all the points, a contradiction.
We conclude that the construction yields only primitive groups.

When $i=-1$, $(s^2t)^4$ is a $3$-cycle,
and when $i=1$ the permutation $(s^3t)^2$ is a $11$-cycle. So long as $n \neq 8$ we may use these permutations to apply Jordan's Theorem \ref{thm:JordanAltSym} and conclude that the group $G$ is alternating. In the case where $n=8$ then $k=11$ and it can be checked that the group generated is also alternating, $G = A_{11}$.
The map is then chiral by inspection of the diagrams in Figure \ref{fig:nevenmis8} using Lemma \ref{lem:chiraldiag} and $b=2$ and $c = -2a - 1$.
\end{proof}

\begin{constr}\label{con:nevenmgeq10} For $m \geq 10$. See Figure \ref{fig:nevenmgeq10}.

Let $n = m + 4a + i $ where $i \in \{ 0, 2 \}$ and $a \geq 0$.

When $m=n \equiv 0$ modulo $4$ let $r = (1,2, \dots , m-1, m)(1',\alpha,\beta, \gamma)$ and 
\noindent $t = (1,1')(2,3)(4,5)(6,8)$.

Otherwise let $r = (1,2, \dots , m-1, m)(1',\alpha_1)r_ar_i$ and 

\noindent $t = (1,1')(2,3)(4,5)(6,8)(m-1,m-1')(m,m')t_a$ where,
for $a \geq 1$ define $t_a := \Pi_{j=1}^{a}{(\alpha_j,\alpha_j')(\beta_j,\beta_j')}$ and $r_a := \Pi_{j=1}^{a}{(\alpha_j',\beta_j)(\beta_j',\alpha_{j+1})}$, and when $a=0$ we define $t_a$ and $r_a$ to be the identity. When $i=0$ let $r_i$ be the identity, otherwise let $r_i = (m-1',\gamma)(m',\delta)$.
\end{constr}

\begin{figure}

\begin{centering}\begin{tikzpicture}[scale=0.6]

\foreach \numpt in {0,1,2,3,4,-1,-2,-3,-4,-5,-6}{\draw[red, line width=2pt] (22.5*\numpt:3)--(22.5*\numpt+22.5:3);}

\centrearc[red, line width=2pt, dashed](0,0)(112.5:225:3)

\draw[blue, line width=2pt] plot [smooth] coordinates {(0:3) (11.25:2.2) (22.5:3)};

\draw[blue, line width=2pt] plot [smooth] coordinates {(-22.5:3) (-33.75:2.2) (-45:3)};

\draw[blue, line width=2pt] plot [smooth] coordinates {(-67.5:3) (-90:2.2) (-112.5:3)};

\draw[blue, line width=2pt] (45:3)--(45:4.2);
\draw[red, line width=2pt] (3,3)--(4.5,3);
\draw[red, line width=2pt] (4.5,3)--(4.5,4.5);
\draw[red, line width=2pt] (4.5,4.5)--(3,4.5);
\draw[red, line width=2pt] (3,3)--(3,4.5);
\centrearc[red, line width=1pt, ->](3.75,3.75)(135:45:.4)

\foreach \numpt in {0,1,2,3,4,-1,-2,-3,-4,-5}{\fill (22.5*\numpt:3) circle(4pt);}

\foreach \numpt in {2}{\fill (22.5*\numpt:4.2) circle(4pt);}

\centrearc[red, line width=2pt, ->](0,0)(135:45:1)

\node[below left] at (0.5,2.9) {$m-1$};
\node[below] at (1,2.7) {$m$};
\node[below left] at (2.2,2.2) {$1$};
\node[right] at (2.8,1.3) {$2$};
\node[right] at (3,0) {$3$};
\node[right] at (2.8,-1.3) {$4$};
\node[right] at (2,-2.5) {$5$};
\node[right] at (1,-3.2) {$6$};
\node[below] at (0,-3) {$7$};
\node[left] at (-1,-3.2) {$8$};

\node[right] at (4.5,3) {$\gamma$};
\node[right] at (4.5,4.5) {$\beta$};
\node[above] at (3,4.5) {$\alpha$};
\fill (4.5,3) circle(4pt);
\fill (4.5,4.5) circle(4pt);
\fill (3,4.5) circle(4pt);

\node[above left] at (3,3) {$1 '$};

\end{tikzpicture}
\hfill{}
\begin{tikzpicture}[scale=0.6]

\foreach \numpt in {0,1,2,3,4,-1,-2,-3,-4,-5,-6}{\draw[red, line width=2pt] (22.5*\numpt:3)--(22.5*\numpt+22.5:3);}

\centrearc[red, line width=2pt, dashed](0,0)(112.5:225:3)

\draw[blue, line width=2pt] plot [smooth] coordinates {(0:3) (11.25:2.2) (22.5:3)};

\draw[blue, line width=2pt] plot [smooth] coordinates {(-22.5:3) (-33.75:2.2) (-45:3)};

\draw[blue, line width=2pt] plot [smooth] coordinates {(-67.5:3) (-90:2.2) (-112.5:3)};

\draw[blue, line width=2pt] (90:3)--(90:4.2);
\draw[red, dotted, line width=2pt] (90:4.2)--(90:5.4);
\draw[blue, line width=2pt] (67.5:3)--(67.5:4.2);
\draw[red, dotted, line width=2pt] (67.5:4.2)--(67.5:5.4);
\draw[blue, line width=2pt] (45:3)--(45:4.2);
\draw[red, line width=2pt] (3,3)--(4,3);
\draw[blue, line width=2pt] (4,3)--(4,2);
\draw[red, line width=2pt] (4,2)--(4,1);
\draw[blue, line width=2pt] (4,1)--(4,0);
\draw[red, line width=2pt] (4,0)--(4,-1);
\draw[black, line width=1pt, dashed,] (4,-1)--(4,-1.5);
\draw[blue, line width=2pt] (4,-1.5)--(4,-2);
\draw[red, line width=2pt] (4,-2)--(4,-2.5);
\draw[blue, line width=2pt] (4,-2.5)--(4,-3);
\draw[red, line width=2pt] (4,-3)--(4,-3.5);
\foreach \numpt in {-0.5,3,4,5,6}{\fill  (4,\numpt-3) circle(4pt);}

\foreach \numpt in {0,1,2,3,4,-1,-2,-3,-4,-5}{\fill (22.5*\numpt:3) circle(4pt);}

\foreach \numpt in {-4,-5,-5.5,-6}{\fill (4,\numpt+3) circle(3pt);}

\foreach \numpt in {2,3,4}{\fill (22.5*\numpt:4.2) circle(4pt);}
\foreach \numpt in {3,4}{\fill (22.5*\numpt:5.4) circle(4pt);}

\centrearc[red, line width=2pt, ->](0,0)(135:45:1)

\node[below left] at (0.5,2.9) {$m-1$};
\node[below] at (1,2.7) {$m$};
\node[below left] at (2.2,2.2) {$1$};
\node[right] at (2.8,1.3) {$2$};
\node[right] at (3,0) {$3$};
\node[right] at (2.8,-1.3) {$4$};
\node[right] at (2,-2.5) {$5$};
\node[right] at (1,-3.2) {$6$};
\node[below] at (0,-3) {$7$};
\node[left] at (-1,-3.2) {$8$};

\node[right] at (4,3) {$\alpha_1$};
\node[right] at (4,2) {$\alpha_1 '$};
\node[right] at (4,1) {$\beta_1$};
\node[right] at (4,0) {$\beta_1 '$};
\node[right] at (4,-1) {$\alpha_2$};

\node[above] at (3,3) {$1 '$};
\node[right] at (1.7,4) {$m '$};
\node[left] at (0,4.1) {$m -1'$};
\node[left] at (0,5.2) {$(\gamma)$};
\node[right] at (2.1,5) {$(\delta)$};

\node[right] at (4,-3.5) {$\alpha_{a+1}$};

\end{tikzpicture}
\caption{Construction \ref{con:nevenmgeq10} when $m=n \equiv 0$ mod $4$ and otherwise.}\label{fig:nevenmgeq10}
\end{centering}
\end{figure}

\begin{prop}\label{prop:nevenmgeq10}
Let $m,n$ be even such that $10 \leq m \leq n$ and let $n$, $r$ and $t$ be defined by Construction \ref{con:nevenmgeq10}. Then $\mathcal{M}(G:r,s)$ is a chiral map of type $\{ m,n \}$ and $G$ is an alternating group.
\end{prop}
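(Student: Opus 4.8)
Construction~\ref{con:nevenmgeq10} is the even--even counterpart of Construction~\ref{con:noddmisgeq10}, so the plan is to follow the proof of Proposition~\ref{prop:noddmgeq10}, carrying the two regimes of the construction ($m=n\equiv 0\pmod{4}$, and the generic case) through the argument in parallel. First I would settle the type. The involution $t$ is in every case a product of an even number of transpositions, hence an even permutation of order $2$; and $r$ is an $m$-cycle (odd, as $m$ is even) times an odd number of further transpositions coming from $(1',\alpha_1)$, $r_a$ and $r_i$ (respectively the $4$-cycle $(1',\alpha,\beta,\gamma)$ in the first regime), so $r$ is even and, all its cycle lengths dividing $m$, has order $m$. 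It then remains to compute $s=r^{-1}t$ and verify that it is a single $n$-cycle fixing exactly four points; as the total degree is $k=n+4$ this is the expected picture, giving a map of type $\{m,n\}$ generated by even permutations, so that $G\le A_k$.

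For the group I would first note that $G$ is transitive by connectivity of the diagram, and then show it is primitive. The cleanest uniform route is the one used in Proposition~\ref{prop:mgeqnequals5}: the single $n$-cycle $s$ fixes four points, and for a suitable $g\in G$ its conjugate $s^g$ shares exactly one of these fixed points, say $p$; then $s$ and $s^g$ both lie in $\mathrm{Stab}_G(p)$ and their two long cycles between them move every other point, so $\mathrm{Stab}_G(p)$ is transitive and $G$ is primitive. (In those subcases where $r$ itself fixes a point one can instead argue exactly as in Proposition~\ref{prop:noddmgeq10}, via a point fixed by both $r$ and a short word in $r$ and $t$.) With primitivity in hand, $s$ is a single cycle fixing $f=4\ge 3$ points, so Jones' Theorem~\ref{thm:JonesAltSym} gives $G\ge A_k$; since $G$ is generated by even permutations, $G=A_k$.

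Finally, chirality. Because $k=n+4\ge 14$ we have $\mathrm{Aut}(A_k)=S_k$, so any hypothetical orientation-reversing automorphism $\phi\colon r\mapsto r^{-1},\ t\mapsto t$ would be conjugation by some $\pi\in S_k$ with $r^\pi=r^{-1}$ and $t^\pi=t$. Following Proposition~\ref{prop:noddmgeq10}, I would locate a point $\zeta$ that is the \emph{unique} point fixed by a prescribed word of the form $r^{j}(tr^{-1})^{\ell}t$ but not fixed by $t$; uniqueness forces the image $\zeta\pi$, and then chasing a neighbouring point through $\pi$ exhibits a point fixed by $t$ being sent to one not fixed by $t$, contradicting $t^\pi=t$ (this is the reasoning of Lemma~\ref{lem:chiraldiag} with $r$ in the role of $s$). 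I expect this last step to be the main obstacle: the witnessing word and the verification of its uniqueness have to be chosen separately for the two regimes and for $i=0$ versus $i=2$, and --- just as the case $m=10,\ i=-1$ had to be treated apart in Proposition~\ref{prop:noddmgeq10} --- a small number of extremal cases (small $m$, or the base case $a=0$) may escape the generic word and require a short direct check. The remainder is routine cycle bookkeeping against the two diagrams of Figure~\ref{fig:nevenmgeq10}.
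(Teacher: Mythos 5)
Your argument breaks down at the cycle structure you assign to $s$: in Construction \ref{con:nevenmgeq10} the permutation $s=r^{-1}t$ is \emph{not} a single $n$-cycle with four fixed points. Because both parameters are even, the construction deliberately allows extra transpositions in the products (as announced at the start of section \ref{sec:neven}): the transpositions $(2,3)(4,5)(6,8)$ in $t$ interact with $r^{-1}$ so that $s$ is an $n$-cycle times the transposition $(7,8)$, with exactly two fixed points, $3$ and $5$. For instance, for $m=n=12$ in the first regime one gets $s=(1,12,11,10,9,6,4,2,1',\gamma,\beta,\alpha)(7,8)$, and the degree $k=n+4$ is accounted for as $n+2+2$, not as $n$ plus four fixed points. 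The order of $s$ is still $\operatorname{lcm}(n,2)=n$, so the type claim survives, but your two main steps collapse: Theorem \ref{thm:JonesAltSym} requires an element that \emph{is} a cycle, so it cannot be applied to $s$; and your primitivity argument via a conjugate of $s$ sharing one of ``four'' fixed points rests on the same false structure (each conjugate carries a stray transposition besides its long cycle, so ``the two long cycles between them move every other point'' is not available). Your fallback of imitating Proposition \ref{prop:noddmgeq10} via a point fixed by $r$ also fails in several subcases, e.g.\ when $i=2$ or in the regime $m=n\equiv 0\bmod 4$ the permutation $r$ fixes no point at all.

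The paper's proof has to work harder precisely at these two points. Primitivity is obtained from $rt$ (a long cycle times a transposition with two fixed points) together with its conjugate $(rt)^{r^2}$: they have a single fixed point in common, whose stabiliser fuses the remaining orbits. For the alternating claim, a genuine single cycle with enough fixed points is manufactured from words in $r$ and $t$, with a three-way case split: when $m=n\equiv 0 \bmod 4$, the element $r^4t$ is a single cycle fixing the three points $\alpha,\beta,\gamma$; otherwise $(r^4t)^2$ is a single $(m+3)$-cycle fixing every Greek-lettered point, which gives at least three fixed points whenever $a\geq 1$ or $i=2$; and in the residual case $a=0$, $i=0$, $m\equiv 2 \bmod 4$, the element $r^2t$ has two cycles of coprime odd lengths $\frac{1}{2}m+2$ and $\frac{1}{2}m$, so $(r^2t)^{m/2}$ is a single cycle, and Theorem \ref{thm:JonesAltSym} applies in all cases. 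Your chirality paragraph, by contrast, is essentially the paper's argument (Lemma \ref{lem:chiraldiag} with $b=2$, $c=3$, with Lemma \ref{lem:chiraldiagsfixedpoint} covering the special case $m=10$), and your anticipation of a split into extremal cases there was well judged; but as it stands the proposal proves neither primitivity nor $G\geq A_k$.
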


\begin{proof}
By its definition the map has the correct type $\{m, n \}$.
To demonstrate that the group is primitive, again we may employ earlier arguments: there is just one point ($2$) fixed by both $rt$ and also by its conjugate $(rt)^{r^2}$, and meanwhile the other orbits of $rt$ (which is the product of a long cycle with a transposition and two fixed points) are fused by the action of $(rt)^{r^2}$.

When $m=n \equiv 0$ modulo $4$, $r^4t$ is a single cycle fixing three points:

\noindent $(1,4,6,10, \dots ,  m-2, 3, 7, \dots, m-1, 2, 8, \dots, m, 5, 9, \dots , m-3, 1')(\alpha)(\beta)(\gamma)$.
In this case the group $G$ is alternating by Theorem \ref{thm:JonesAltSym}.

Otherwise when $ m \equiv 0, 2$ mod $4$, the permutation $r^4t$ is respectively

\noindent $(1,4,6,10, \dots ,  m-2, 3, 7, \dots, m-1', m-1, 2, 8, \dots, m', m, 5, 9, \dots , m-3, 1')t_a $
$(1,4,6,10, \dots ,  m',m,5,9, \dots, m-1',m-1,2,8, \dots, m-2, 3,7, \dots , m-3, 1')t_a .$

\noindent In these cases the permutation $(r^4t)^2$ fixes every point labelled with a Greek letter and has cyclic decomposition consisting of precisely one cycle of length $m+3$. When $a \geq 1$ or $i=2$, this permutation has enough fixed points to be able to call on Jones' version of Jordan's Theorem \ref{thm:JonesAltSym} to prove the alternating claim. When $a=0$ and $i=0$, then $m=n$, and the only remaining case is when $m \equiv 2$ mod $4$. Consider the permutation $r^2t = (1,2,5,7, \dots,m-1',m-1,1')(3,4,8,\dots, m',m)(6)(\alpha_1)$ which, when $m \equiv 2$ mod $4$, has cycles of coprime odd lengths $\frac{1}{2}m +2$ and $\frac{1}{2}m$ so the permutation $(r^2t)^{\frac{1}{2}m}$ is a single cycle. Applying Theorem \ref{thm:JonesAltSym} for one final time, we may conclude that in every case the group $G$ is alternating.

Chirality follows by Lemma \ref{lem:chiraldiag} with $b=2$ and $c=3$ for every $m \geq 12$, while in the special case where $m=10$ we use Lemma \ref{lem:chiraldiagsfixedpoint} with the same parameters.
\end{proof}

\section{Proof of Theorem \ref{thm:Thispaper}}\label{sec:proof}

In this section we combine all the results from the previous sections to prove our theorem, which we reproduce here for ease of reference:
\emph{Given a hyperbolic type $\{m ,n \}$, there exists a chiral map of that type with alternating automorphism group $A_k$, for some degree $k$.}

\begin{proof}
When $m$ and $n$ are both odd we may rely on the work of Conder, Huc\'{i}kov\'{a}, Nedela and \v{S}ir\'{a}\v{n} as follows. The orientation-preserving automorphism group $G = \langle r,s \rangle$, from Theorem \ref{thm:CHNS} is either alternating or symmetric and can be generated by two elements, one of order $m$ and the other of order $n$, whose product is an involution. Since $m$ and $n$ are both odd, these are even permutations and the claim is immediate.

For the cases when precisely one of $m$ or $n$ is odd, we assume $m$ is even and we then split the situation according to the relative sizes of $m$ and $n$.

When even $m < n$ odd we may use the constructions in this paper combined with Propositions \ref{prop:noddmis4}, \ref{prop:noddmis6}, \ref{prop:noddmis8}, and \ref{prop:noddmgeq10} to cover all but finitely many cases. In particular, when $m < n$:
Proposition \ref{prop:noddmis4} addresses when $m=4$ and $n \geq 9$, leaving the hyperbolic types $\{ 4,5 \}$ and $\{ 4,7 \}$ (and dual types  $\{ 5,4 \}$ and $\{ 7,4 \}$) still under question;
Proposition \ref{prop:noddmis6} proves the claim when $m=6$ and $n \geq 9$ and so leaves the existence of an alternating chiral map of type $\{ 6,7 \}$ (and so also the type $\{ 7,6 \}$) yet to be determined;
Proposition \ref{prop:noddmis8} covers the claim when $m=8$ and $n \geq 9$;
and Proposition \ref{prop:noddmgeq10} completes this part by proving the claim for all types $\{m, n \}$ where $10 \leq m < n$.

Next we address the cases where odd $n < m$ which is even.
In the case where $n=3$ we use Theorem \ref{thm:BCC} from Bujalance, Conder and Costa, noting that the only missing values for $m$ are for non-hyperbolic types.
Proposition \ref{prop:mgeqnequals5} demonstrates that when $n=5$ the claim is true for all even $m \geq 14$.
When $n =7$ and even $m \geq 16$ the question of existence of a chiral map of type $\{m, n \}$ with alternating group is dealt with by Proposition \ref{prop:mgeqnequals7or9} as is the case when $n = 9$ and $m \geq 22$.
Proposition \ref{prop:mgeqngeq7} shows that when odd $n \geq 7$ and even $m$ is such that $ n+1 \leq m \leq 2n-6$ there is a chiral map of type $\{m, n \}$ with alternating automorphism group while Proposition \ref{prop:mgeqngeq11} proves the same claim for $n \geq 11$ whenever even $m$ is such that $m \geq 2n-6$.

When both $m$ and $n$ are even, we call on duality and use: Proposition \ref{prop:nevenmis4} for $m=4$ and $n \geq 10$; Proposition \ref{prop:nevenmis6} for $m=6$ and $n \geq 10$; Proposition \ref{prop:nevenmis8} for $m=8$; and Proposition \ref{prop:nevenmgeq10} for $m \geq 10$. Small modifications to the constructions yield examples for types $\{ 4,6 \}$, $\{ 4,8 \}$ and $\{ 6,8 \}$ which are given in section \ref{sec:neven}. The only such type still missing is $\{6,6 \}$.

Assuming $m$ is even, and working up to duality, an example pair of generators for a chiral map with alternating group is given in Table \ref{tab:missing} for each of the missing types. 

\begin{table}
\begin{tiny}
\begin{tabular}{|c|c|c|c|}
\hline
Type & $s$ & $t$ & $G$ \\
\hline
$\{4,5\}$ & $(1,2,3,4,5)$ & $(1,3)(4,6)$ & $A_6$ \\

$\{6, 5 \}$ & $(1,2,3,4,5)(6,7,8,9,10)$ & $(1,6)(2,4)(7,8)(9,10)$ & $A_{10}$ \\

$\{8, 5 \}$ & $(1,2,3,4,5)(6,7,8,9,10)$ & $(1,6)(2,4)$ & $A_{10}$ \\

$\{10, 5 \}$ & $(1,2,3,4,5)(6,7,8,9,10)$ & $(1,6)(2,4)(7,11)(8,12)$ & $A_{12}$ \\

$\{12, 5 \}$ & $(1,2,3,4,5)(6,7,8,9,10)$ & $(1,6)(2,4)(7,11)(8,12)(9,13)(10,14)$ & $A_{14}$ \\

& & & \\

$\{ 6,6 \}$ & $(1,2,3,4,5,6)(7,8,9,10,11,12)$  & $(1,7)(2,5)(6,13)(8,12)(9,14)(10,15)$  & $A_{15}$  \\

& & & \\

$\{ 4,7 \}$ & $(1,2,3,4,5,6,7)$ & $(1,5)(2,3)(4,9)(7,8)$ & $A_9$\\

$\{ 6,7 \}$ & $ (1,2,3,4,5,6,7)(8,9,10,11,12,13,14)$ & $ (1,3)(4,6)(7,8)(9,13)(10,15)(11,16)$ & $A_{16}$ \\

$\{10,7\}$ & $(1,2,3,4,5,6,7)(8,9,10,11,12,13,14)$  & $(1,8)(2,4)(5,6)(9,10)$  & $A_{14}$  \\

$\{12,7\}$ &  $(1,2,3,4,5,6,7)(8,9,10,11,12,13,14)$ & $(1,8)(2,3)(4,5)(6,15)(7,16)(9,17)(10,12)(13,14)$  & $A_{17}$  \\
 
$\{14,7\}$ & $(1,2,3,4,5,6,7)(8,9,10,11,12,13,14)$  &$(1,8)(7,9)(10,15)(11,16)$   & $A_{16}$  \\
 
& & & \\

$\{14,9\}$ & $(1,2,3,4,5,6,7,8,9)(10,11,12) := S_9$  &$(1,10)(2,12)(3,13)(4,14)(5,15)(6,16)$   & $A_{16}$  \\

$\{16,9\}$ & $S_9(13,17,18)$  & $(1,10)(2,12)(3,13)(4,14)(5,15)(6,16)$  & $A_{18}$  \\

$\{18,9\}$ & $S_9(13,17,18)(14,19,20)$  & $(1,10)(2,12)(3,13)(4,14)(5,15)(6,16)$  & $A_{20}$  \\

$\{20,9\}$ & $S_9(13,17,18)(14,19,20)(15,21,22)$  & $(1,10)(2,12)(3,13)(4,14)(5,15)(6,16)$  & $A_{22}$  \\
\hline
\end{tabular}\caption{Filling gaps - the missing types}\label{tab:missing}
\end{tiny}
\end{table}

The theorem has been proven for all hyperbolic types $\{ m,n \}$ where $m$ is even, and so by the well-known fact that the dual of a chiral map is chiral with the same automorphism group, the proof is complete.
\end{proof}

\section*{Acknowldgement}
The author is very grateful to Jozef \v{S}ir\'{a}\v{n} for asking the question which this paper answers, for his generosity with time and advice, for reading an earlier draft and for many enjoyable related discussions.

\end{document}